\newtheorem{thm}{Theorem}[section]
\newtheorem{cor}[thm]{Corollary}
\newtheorem{lem}[thm]{Lemma}
\newtheorem{prop}[thm]{Proposition}
\newtheorem{rem}[thm]{Remark}
\newtheorem{ex}[thm]{Example}
\newcommand{\la}{\lambda}
\newcommand{\lf}{\lfloor}
\newcommand{\rf}{\rfloor}
\newcommand{\F}{\mathcal{F}}
\newcommand{\M}{\mathcal{M}}
\newcommand{\Fc}{\mathcal{F}_{\rm c}}
\newcommand{\MC}{\mathcal{M}_{\rm c}}
\newcommand{\oF}{\overline{\mathcal{F}}}
\newcommand{\oM}{\overline{\mathcal{M}}}
\newcommand{\oFc}{\overline{\mathcal{F}}_{\rm c}}
\newcommand{\oMc}{\overline{\mathcal{M}}_{\rm c}}
\newcommand{\run}{\operatorname{run}}
\newcommand{\MD}{\operatorname{MD}}
\newcommand{\cc}{\operatorname{cc}}
\newcommand{\scc}{\operatorname{scc}}
\title[Combinatorics on bounded free Motzkin paths and its applications]{Combinatorics on bounded free Motzkin paths and its applications}
\author{Hyunsoo Cho}
\address{Hyunsoo Cho, Institute of Mathematical Sciences, Ewha Womans University, Seoul, Republic of Korea}
\email{hyunsoo@ewha.ac.kr}
\author{JiSun Huh}
\address{JiSun Huh, Department of Mathematics, Ajou University, Suwon, Republic of Korea}
\email{hyunyjia@ajou.ac.kr}
\author{Hayan Nam}
\address{Hayan Nam, Department of Mathematics, Duksung Women's University,  Seoul, Republic of Korea}
\email{hnam@duksung.ac.kr}
\author{Jaebum Sohn}
\address{Jaebum Sohn, Department of Mathematics, Yonsei University, Seoul, Republic of Korea}
\email{jsohn@yonsei.ac.kr}
\begin{document}

\maketitle

\begin{abstract}
In this paper, we construct a bijection from a set of bounded free Motzkin paths to a set of bounded Motzkin prefixes that induces a bijection from a set of bounded free Dyck paths to a set of bounded Dyck prefixes. We also give bijections between a set of bounded cornerless Motzkin paths and a set of $t$-core partitions, and a set of bounded cornerless symmetric Motzkin paths and a set of self-conjugate $t$-core partitions. As an application, we get explicit formulas for the number of ordinary and self-conjugate $t$-core partitions with a fixed number of corners.
\end{abstract}


\section{Introduction}\label{sec:intro}

The main result of this paper is finding a bijection between two sets of paths in a bounded strip, which have been studied by several researchers (for example, see \cite{AK,Cigler,CK,Dershowitz,GP,KM}).

A \emph{Motzkin path} of length $n$ is a path from $(0,0)$ to $(n,0)$ which stays weakly above the $x$-axis and consists of steps $u=(1,1)$, $d=(1,-1)$, and $f=(1,0)$, called \emph{up}, \emph{down}, and \emph{flat} steps, respectively.
A \emph{free Motzkin path} of length $n$ is a path which starts at \((0,0)\) or \((0,1)\), ends at \((n,0)\), and consists of \(u\), \(d\), and \(f\). A Motzkin path with no restrictions on the end point is called a Motzkin prefix. For a given path, a \emph{peak} is a point preceded by an up step and followed by a down step and a \emph{valley} is a point preceded by a down step and followed by an up step. We say that a path is \emph{cornerless} if it has no peaks or valleys.

For non-negative integers \(m,r\), and \(k\), let \(\F(m,r,k)\) be the set of free Motzkin paths of length \(m+r\) with \(r\) flat steps that are contained in the strip \(-\lf \frac{k}{2}\rf \leq y \leq \lf \frac{k+1}{2} \rf\).
We denote \(\M(m,r,k)\) the set of Motzkin prefixes of length \(m+r\) with \(r\) flat steps that are contained in the strip \(0\leq y \leq k\). 
We define \(L_k\) to be one of the boundaries of each path depending on the value of $k$. More specifically, for $P\in \F(m,r,k)$,
denote \(L_k\) by  
\[
y=\begin{cases}
\lf\frac{k+1}{2}\rf & \text{if \(k\) is odd},\\ 
-\lf\frac{k}{2}\rf & \text{if \(k\) is even}.
\end{cases}
\]

Let \(\oF(m,r,k)\) (resp.  \(\oM(m,r,k)\)) be the set of paths in \(\F(m,r,k)\) (resp. \(\M(m,r,k)\)) which touch the line \(L_k\) (resp. $y=k$) so that
\[
\F(m,r,k)=\bigcup_{i=0}^{k}\oF(m,r,i) \quad \text{and} \quad \M(m,r,k)=\bigcup_{i=0}^{k}\oM(m,r,i).
\]

Our main theorem states the following. 

\begin{thm} \label{thm:main}
For given non-negative integers \(m,r\), and \(k\), there is a bijection between the sets \(\oF(m,r,k)\) and \(\oM(m,r,k)\).
\end{thm}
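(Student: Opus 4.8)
The plan is to deduce the theorem from a single ``unbounded'' bijection together with the filtration coming from the stated union identities. First I would observe that these unions are in fact disjoint: a free Motzkin path $P$ (starting at height $0$ or $1$, ending at height $0$) that reaches depth $\alpha\ge 0$ below and height $\beta\ge 0$ above the $x$-axis lies in the strip defining $\F(m,r,i)$ precisely when $\lf i/2\rf\ge\alpha$ and $\lf (i+1)/2\rf\ge\beta$, so the smallest such $i$ is $\max(2\alpha,2\beta-1)$, and for exactly this index $P$ touches the boundary $L_i$ (the bottom wall when $i$ is even, the top wall when $i$ is odd). Hence $\oF(m,r,i)$ is the set of free Motzkin paths with $\max(2\alpha,2\beta-1)=i$, while $\oM(m,r,i)$ is visibly the set of Motzkin prefixes of maximal height exactly $i$, and these two families partition $\F(m,r,k)$ and $\M(m,r,k)$ respectively. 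Consequently it suffices to construct one bijection $\Phi$ from free Motzkin paths to Motzkin prefixes (of the same length $m+r$ and the same number $r$ of flat steps, with no height restriction) satisfying the statistic identity $\operatorname{maxheight}(\Phi(P))=\max\bigl(2\alpha(P),\,2\beta(P)-1\bigr)$; restricting $\Phi$ to each layer then yields Theorem~\ref{thm:main}, and the confinement to the strips is automatic once this statistic is matched.

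To build $\Phi$ I would treat the flat steps as inert passengers, copied directly and never affecting the height, so the construction reduces to the case $r=0$ of free Dyck paths versus Dyck prefixes, both enumerated by the central binomial coefficient $\binom{m}{\lf m/2\rf}$. Here the natural tool is a Chung--Feller-type straightening: reverse $P$ (interchanging $u$ and $d$) so that it starts at height $0$ and ends at height $0$ or $1$, and then fold the below-axis portions into the upper half-plane by reflecting the path at its successive negative records. Because a Motzkin prefix may terminate at an arbitrary height, this rebuilding is permitted to alter the individual numbers of up and down steps while preserving their total $m$; this is exactly the extra freedom that allows a short, deep free path such as $dduu$ to be sent to a tall prefix such as $uuuu$. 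Afterwards the flats are reinserted in their original positions. The map is invertible because reversal is an involution and the record-reflection folding is undone by recording, at each returning visit to the axis, whether the corresponding excursion originated above or below; equivalently one reads off from the prefix its sequence of record values and reconstructs the signs.

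The delicate point, and the step I expect to be the main obstacle, is the exact statistic transfer $\operatorname{maxheight}(\Phi(P))=\max(2\alpha,2\beta-1)$. The factor $2$ attached to the depth reflects that a below-axis excursion of depth $\alpha$ is folded so as to sit above the already-used positive range, thereby contributing height $2\alpha$, whereas the $-1$ attached to $\beta$ encodes the asymmetry between the two admissible start heights $\{0,1\}$ and the single end height $0$; this is precisely the even/odd alternation built into the definition of $L_k$. Making the folding rule realize this formula on the nose, so that a path which is deeper than it is tall ($\alpha\ge\beta$) lands in the even layer $2\alpha$ and one which is taller than it is deep ($\beta>\alpha$) lands in the odd layer $2\beta-1$, is where the genuine work lies. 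I would verify it by tracking how the maximal height of the running prefix is updated at each record of $P$ and checking the two regimes $\alpha\ge\beta$ and $\beta>\alpha$ separately, which simultaneously confirms that $\Phi$ carries $\oF(m,r,i)$ bijectively onto $\oM(m,r,i)$ for every $i$, in particular for $i=k$.
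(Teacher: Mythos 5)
Your two preliminary reductions are correct and cleanly stated: the unions $\F(m,r,k)=\bigcup_{i=0}^{k}\oF(m,r,i)$ and $\M(m,r,k)=\bigcup_{i=0}^{k}\oM(m,r,i)$ are indeed disjoint, $\oF(m,r,i)$ is exactly the set of free Motzkin paths with $\max(2\alpha,2\beta-1)=i$, $\oM(m,r,i)$ is the set of prefixes of maximal height exactly $i$, and carrying the flat steps as inert markers legitimately reduces everything to free Dyck paths versus Dyck prefixes. So the theorem would follow from a single bijection $\Phi$ on unrestricted free Dyck paths satisfying $\mathrm{maxheight}(\Phi(P))=\max\bigl(2\alpha(P),2\beta(P)-1\bigr)$. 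The genuine gap is that the map you then propose does not have this property, and this is not a ``delicate point left to verify'' but a provably false claim. Any construction that reflects only the below-axis portions fixes (up to reversal) every path with $\alpha=0$, yet such paths must have their height nearly doubled: $P=uudd$ has $\alpha=0$, $\beta=2$, so it must be sent to a prefix of maximal height $2\beta-1=3$, while your map leaves it at height $2$. Concretely, the record-reflection $h_i\mapsto h_i-2\min_{j\le i}h_j$ is a genuine bijection onto nonnegative prefixes (and it does send $dduu$ to $uuuu$), but on the six free Dyck paths of length $4$ it produces maximal heights $2,1,2,3,2,4$ on $uudd,udud,uddu,duud,dudu,dduu$, whereas the statistic demands $3,1,2,2,2,4$: the path $uudd$ (statistic $3$) lands on a height-$2$ prefix, and the height-$3$ prefix $uuud$ is used up by a path of statistic $2$. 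Thus your $\Phi$ is a bijection of the total sets that fails to respect the layers, and respecting the layers is the entire content of the theorem.

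The obstruction is structural, not computational. Matching the statistic forces an interleaving of levels (positive level $j$ of $P$ must correspond to level $2j-1$ of the prefix, negative level $-j$ to level $2j$), so positive excursions have to be stretched and woven together with the folded negative ones; in particular, the image's maximal height cannot depend, as $\max_i\bigl(h_i-2\min_{j\le i}h_j\bigr)$ does, on whether the maximum of $P$ occurs before or after its minimum. This is precisely why the paper (and Gu--Prodinger and Dershowitz in the $r=0$ case) resort to a recursive surgery rather than a folding: the path is cut at the first step reaching $L_k$ and at the break step, the pieces are reversed and complemented so that the portion near $L_k$ becomes the portion near $y=k$, and the construction recurses on a strictly narrower strip, realizing the level-interleaving. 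Your outer framework is compatible with, and arguably tidier than, the paper's layer-by-layer formulation, but the core bijection it requires is exactly what is missing, so the proposal as written does not prove the theorem.
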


To prove Theorem~\ref{thm:main}, we construct a map \(\phi_{m,k}\) and show that it is bijective in Sections \ref{sec:map_phi} and \ref{sec:1-1}.

Using the adjacency matrices of path graphs, Cigler \cite{Cigler} showed that
\[
|A_{n,k}|=|B_{n,k}|=\sum_{j\in\mathbb{Z}}(-1)^j \binom{n}{\lf \frac{n+(k+2)j}{2}\rf}
\]
and expected the existence of a simple bijection between \(A_{n,k}\) and \(B_{n,k}\), 
where $A_{n,k}$ is the set of paths of length $n$ which consist of $u$ and $d$ only, start at $(0,0)$, end on height $0$ or $-1$, and are contained in the strip $-\lf \frac{k+1}{2}\rf\le y\le \lf\frac{k}{2}\rf$ of width $k$, and $B_{n,k}$ is the set of paths of length $n$ which consist of $u$ and $d$ only, start at $(0,0)$ and are contained in the strip $0\le y\le k$. Recently, Gu and Prodinger \cite{GP} and Dershowitz \cite{Dershowitz} found bijections between $A_{n,k}$ and $B_{n,k}$ independently. We note that Theorem~\ref{thm:main} with no flat step (equivalently, $r=0$) gives a new bijection between $A_{n,k}$ and $B_{n,k}$ since
$\F(n,0,k)$ can be obtained from $A_{n,k}$ by mirroring left and right and flipping along the $x$-axis, and $\M(n,0,k)=B_{n,k}$ as it is. 
We should mention that the bijection \(\phi_{m,k}\) is inspired by the bijection due to Gu and Prodinger, but there is a property that \(\phi_{m,k}\) holds whereas Gu and Prodinger's does not. This property is described in Section \ref{sec:special}.

Let \(\oFc(m,r,k)\) be the set of cornerless free Motzkin paths in \(\oF(m,r,k)\) that never start with a down (resp. up) step for odd (resp. even) \(m\) and \(\oMc(m,r,k)\) be the set of cornerless Motzkin prefixes in \(\oM(m,r,k)\) that end with a flat step. In Section~\ref{sec:cornerless}, we show that \(\phi_{m,k}\) induces a bijection between \(\oFc(m,r,k)\) and \(\oMc(m,r,k)\).

In Section~\ref{sec:tcore}, we combinatorially interpret $t$-core partitions by cornerless Motzkin paths. We describe a bijection between a set of cornerless Motzkin paths and a set of $t$-core partitions. As an application of this bijection, we count the number of $t$-core partitions with $m$ corners. 
In Section~\ref{sec:self}, we also count the number of self-conjugate $t$-core partitions with $m$ corners by constructing bijections between any pair of the following sets: a set of cornerless free Motzkin paths, a set of cornerless symmetric Motzkin paths, and a set of self-conjugate $t$-core partitions.


\section{Bijection}\label{sec:bijection}

In this section, we recursively define a map 
\[
\phi_{m,k}: \bigcup_{r\geq 0}\oF(m,r,k) \rightarrow \bigcup_{r\geq 0}\oM(m,r,k),
\]
according to the values of \(m\) and \(k\), and then show that it is bijective. 
For simplicity, we define some notations first. For a path \(P=p_1p_2\dots p_n\), where each \(p_i\) denotes the \(i\)th step in \(P\), let 
\[
\overline{P}:=\overline{p}_1\overline{p}_2\dots\overline{p}_n \quad \text{and} \quad
\overleftarrow{P}:=\overline{p}_n\overline{p}_{n-1}\dots\overline{p}_1,
\]
where \(\overline{u}:=d,~ \overline{d}:=u\), and \(\overline{f}:=f\).

\subsection{Map \(\phi_{m,k}\)}\label{sec:map_phi}

Now we define the map. Let \(P\) be a path in the set \(\oF(m,r,k)\) for some $r\ge 0$, and \(\gamma\geq 0\) denote the maximum number such that \(f^{\gamma}\) is a suffix of \(P\).   
\begin{enumerate}
    \item[\textbf{Case 0}.] If \(k=0\) or \(k=1\), then the map is defined as
        \[
        \phi_{m,k}(P):=\overleftarrow{P}.
        \]
        We show the bijection \(\phi_{m,1}\) in Figure~\ref{fig:case0}. 
\end{enumerate}

\begin{figure}[hbt]
\small{
\begin{subfigure}[b]{0.4\textwidth}
\centering
\begin{tikzpicture}[scale=0.8]

\filldraw[fill=gray!30] (1,0) -- (3,0) -- (3,0.5) -- (1,0.5) --cycle;

\draw[gray, ->] (0,0) -- (4.5,0);
\draw[gray, ->] (0,0) -- (0,1);

\foreach \i in {1,...,8}
\draw[dotted] (0.5*\i,0) -- (0.5*\i,0.5)
;	

\draw[dotted] (0,0.5) -- (4,0.5);
\draw[thick] 
(0,0.5) -- (1,0.5) -- (1.5,0) 
(2.5,0.5) -- (3,0) -- (4,0) ;

\node[above] at (0.5,0.45) {\(f^{\beta}\)};
\node[above] at (2,0) {\(B\)};
\node[above] at (3.5,-0.05) {\(f^{\gamma}\)};

\end{tikzpicture}\\

\begin{tikzpicture}[scale=0.8]

\filldraw[fill=gray!30] (1,0) -- (3,0) -- (3,0.5) -- (1,0.5) --cycle;

\draw[gray, ->] (0,0) -- (4.5,0);
\draw[gray, ->] (0,0) -- (0,1);

\foreach \i in {1,...,8}
\draw[dotted] (0.5*\i,0) -- (0.5*\i,0.5)
;	

\draw[dotted] (0,0.5) -- (4,0.5);
\draw[thick] 
(0,0) -- (1,0) -- (1.5,0.5)
(2.5,0) -- (3,0.5) -- (4,0.5);

\node[above] at (0.5,-0.05) {\(f^{\gamma}\)};
\node[above] at (2,-0.07) {\(\overleftarrow{B}\)};
\node[above] at (3.5,0.45) {\(f^{\beta}\)};

\node at (2,1.2) {\(\updownarrow\)};
\end{tikzpicture}
\subcaption{For odd \(m\)}
\end{subfigure}
\qquad 
\begin{subfigure}[b]{0.4\textwidth}
\centering
\begin{tikzpicture}[scale=0.8]

\filldraw[fill=gray!30] (2.5,0) -- (4.5,0) -- (4.5,0.5) -- (2.5,0.5) --cycle;

\draw[gray, ->] (0,0) -- (6,0);
\draw[gray, ->] (0,0) -- (0,1);

\foreach \i in {1,...,11}
\draw[dotted] (0.5*\i,0) -- (0.5*\i,0.5)
;	

\draw[dotted] (0,0.5) -- (5.5,0.5);
\draw[thick] 
(0,0) -- (1,0) -- (1.5,0.5) -- (2.5,0.5) -- (3,0)
(4,0.5) -- (4.5,0) -- (5.5,0) ;

\node[above] at (0.5,-0.05) {\(f^{\epsilon}\)};
\node[above] at (1.15,0.2) {\(u\)};
\node[above] at (2,0.45) {\(f^{\beta}\)};
\node[above] at (3.5,0) {\(B\)};
\node[above] at (5,-0.05) {\(f^{\gamma}\)};

\end{tikzpicture}\\

\begin{tikzpicture}[scale=0.8]

\filldraw[fill=gray!30] (1,0) -- (3,0) -- (3,0.5) -- (1,0.5) --cycle;

\draw[gray, ->] (0,0) -- (6,0);
\draw[gray, ->] (0,0) -- (0,1);

\foreach \i in {1,...,11}
\draw[dotted] (0.5*\i,0) -- (0.5*\i,0.5)
;	

\draw[dotted] (0,0.5) -- (5.5,0.5);
\draw[thick] 
(0,0) -- (1,0) -- (1.5,0.5)
(2.5,0) -- (3,0.5) -- (4,0.5) -- (4.5,0) -- (5.5,0) ;

\node[above] at (0.5,-0.05) {\(f^{\gamma}\)};
\node[above] at (2,-0.07) {\(\overleftarrow{B}\)};
\node[above] at (3.5,0.45) {\(f^{\beta}\)};
\node[above] at (4.35,0.15) {\(d\)};
\node[above] at (5,-0.05) {\(f^{\epsilon}\)};
\node at (3,1.2) {\(\updownarrow\)};
\end{tikzpicture}
\subcaption{For even \(m\)}
\end{subfigure}
}
\caption{The bijections \(\phi_{m,1}\) in Case 0}
\label{fig:case0}
\end{figure}
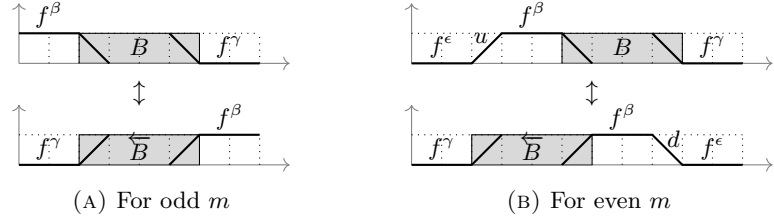

Now assume \(k>1\). 
A \emph{special step} of \(P\) is the first step ending on the line \(L_k\). 
We write \(P\) as 
\begin{equation}\label{eq:basic}
    P=Af^{\alpha}sf^{\beta}Bf^{\gamma},
\end{equation} 
where \(s\) is the special step, \(\alpha\geq 0\) (resp. \(\beta\geq 0\)) is the maximum number of consecutive flat steps right before (resp. after) the step \(s\), \(A\) denotes the prefix of \(P\) before the subpath \(f^{\alpha}s\), and \(B\) denotes the subpath between the subpaths \(sf^{\beta}\) and \(f^{\gamma}\). Note that \(A\) and \(B\) never end with a flat step (See Figure~\ref{fig:basic}).

\begin{figure}[hbt]
\small{
\begin{subfigure}[b]{0.45\textwidth}
\centering
\begin{tikzpicture}[scale=0.5]

\filldraw[fill=gray!30] (0,-1.5) -- (0,1.5) -- (2,1.5) -- (2,-1.5) --cycle;
\filldraw[fill=gray!30] (4.5,-1.5) -- (4.5,2) -- (6.5,2) -- (6.5,-1.5) --cycle;

\draw[gray, ->] (0,0) -- (8,0);
\draw[gray, ->] (0,-1.5) -- (0,2.5);
\draw[gray] (0,2) -- (7.5,2);

\foreach \i in {1,...,15}
\draw[dotted] (0.5*\i,-1.5) -- (0.5*\i,2)
;	

\foreach \i in {1,...,3}
\draw[dotted] (0,0.5*\i) -- (7.5,0.5*\i)
;	

\foreach \i in {1,...,3}
\draw[dotted] (0,-0.5*\i) -- (7.5,-0.5*\i)
;

\draw[thick] 
(1.5,1) -- (2,1.5) -- (3,1.5) -- (3.5,2) -- (4.5,2) -- (5,1.5)
(6,0.5) -- (6.5,0)
(6,-0.5) -- (6.5,0) -- (7.5,0);

\node at (1,0.75) {\(A\)};
\node[above] at (2.5,1.45) {\(f^{\alpha}\)};
\node[above] at (3.15,1.65) {\(s\)};
\node[above] at (4,1.95) {\(f^{\beta}\)};
\node at (5.5,1) {\(B\)};
\node[above] at (7,-0.05) {\(f^{\gamma}\)};
\node[left] at (0,2) {\(\frac{k+1}{2}\)};
\node[left] at (0,-1.5) {\(-\frac{k-1}{2}\)};
\node[right] at (7.5,2) {\(L_k\)};
\node[below] at (0,-2.5) {};

\end{tikzpicture}
\subcaption{For odd \(k\)}
\end{subfigure}
\qquad
\begin{subfigure}[b]{0.45\textwidth}
\centering
\begin{tikzpicture}[scale=0.5]

\filldraw[fill=gray!30] (0,-1.5) -- (0,2) -- (2,2) -- (2,-1.5) --cycle;
\filldraw[fill=gray!30] (4.5,-2) -- (4.5,2) -- (6.5,2) -- (6.5,-2) --cycle;

\draw[gray, ->] (0,0) -- (8,0);
\draw[gray, ->] (0,-2) -- (0,2.5);
\draw[gray] (0,-2) -- (7.5,-2);

\foreach \i in {1,...,15}
\draw[dotted] (0.5*\i,-2) -- (0.5*\i,2)
;	

\foreach \i in {1,...,4}
\draw[dotted] (0,0.5*\i) -- (7.5,0.5*\i)
;	

\foreach \i in {1,...,3}
\draw[dotted] (0,-0.5*\i) -- (7.5,-0.5*\i)
;

\draw[thick] 
(1.5,-1) -- (2,-1.5) -- (3,-1.5) -- (3.5,-2) -- (4.5,-2) -- (5,-1.5)
(6,0.5) -- (6.5,0)
(6,-0.5) -- (6.5,0) -- (7.5,0);

\node at (1,-0.75) {\(A\)};
\node[below] at (2.5,-1.45) {\(f^{\alpha}\)};
\node[below] at (3.15,-1.65) {\(s\)};
\node[below] at (4,-1.95) {\(f^{\beta}\)};
\node at (5.5,-1) {\(B\)};
\node[below] at (7,-0.05) {\(f^{\gamma}\)};
\node[left] at (0,2) {\(\frac{k}{2}\)};
\node[left] at (0,-2) {\(-\frac{k}{2}\)};
\node[right] at (7.5,-2) {\(L_k\)};

\end{tikzpicture}
\subcaption{For even \(k\)}
\end{subfigure}
}
\caption{The division of \(P\) for \(k>1\)}
\label{fig:basic}
\end{figure}
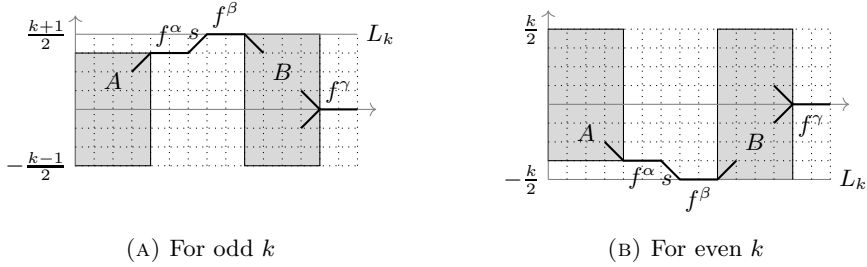

Let the last vertex on the line \(L_k\) (resp. $y=k$) be the \emph{turning point} of a path in \(\oF(m,r,k)\) (resp. \(\oM(m,r,k)\)). We call the first step after the turning point starting from the $x$-axis and heading away from the line $L_k$ the \emph{break step}, and denote it by $b$.
If \(P\) has the break step \(b\), let 
\(\delta\geq 0\) be the maximum number of consecutive flat steps right before the step \(b\) and we write \(B\) as \(B_1f^{\delta}bB_2\).

\begin{enumerate}
    \item[\textbf{Case 1}.] Let \(m\) and \(k\) have the same parity with \(k>1\). 
    \begin{enumerate}
        \item[{\rm i)}] If there is no break step, then we write \(P\) as \eqref{eq:basic} and define the map as
        \begin{equation}\label{eq:case1-1}
            \phi_{m,k}(P):=\begin{cases}
            Q & \text{if \(k\) is odd},\\
            \overline{Q} & \text{if \(k\) is even},
            \end{cases}
        \end{equation}
        where
        \[
        Q:=f^{\gamma}\overline{B}f^{\alpha}sAf^{\beta}.
        \]
        Note that \(\phi_{m,k}(P)\) ends on the line \(y=k\).
        \item[{\rm ii)}] If there is the break step \(b\), then \(P\) can be written as  
        \begin{equation}\label{eq:case1-2P}
           P=Af^{\alpha}sf^{\beta}B_1f^{\delta}bB_2f^{\gamma}.
        \end{equation}
        Note that \(B_1\) is a subpath starting from the line \(L_k\) and ending at the \(x\)-axis with a down (resp. up) step, and \(B_2\) is a subpath starting from the line \(y=(-1)^k\) and ending at the \(x\)-axis with a non-flat step for odd (resp. even) \(k\). 
        Define
        \begin{equation}\label{eq:case1-2}
        \phi_{m,k}(P):=\begin{cases}
        Q\overline{C} & \text{if \(k\) is odd},\\
        \overline{Q}\overline{C} & \text{if \(k\) is even},
        \end{cases}
        \end{equation}
        where
        \[
        Q:=f^{\gamma}\overline{B_1}f^{\alpha}sAf^{\beta}b
        \quad
        \text{and}
        \quad
        C:=\begin{cases}
        \phi_{m',k'}(\overline{B_2}f^{\delta}) & \text{if \(k\) is odd},\\
        \phi_{m',k'}(B_2f^{\delta}) & \text{if \(k\) is even}.
        \end{cases}
        \] 
        Note that \(m'\) is odd in this case.
        The bijection in Case 1 is illustrated in Figure~\ref{fig:case1}.
    \end{enumerate}
\end{enumerate}

\begin{figure}[hbt]
\small{
\begin{subfigure}[b]{0.45\textwidth}
\centering
\begin{tikzpicture}[scale=0.45]

\filldraw[fill=gray!30] (0,-1.5) -- (0,1.5) -- (2,1.5) -- (2,-1.5) --cycle;
\filldraw[fill=gray!30] (4.5,-1.5) -- (4.5,2) -- (6.5,2) -- (6.5,-1.5) --cycle;

\draw[gray, ->] (0,0) -- (8,0);
\draw[gray, ->] (0,-1.5) -- (0,2.5);
\draw[gray] (0,2) -- (7.5,2);

\foreach \i in {1,...,15}
\draw[dotted] (0.5*\i,-1.5) -- (0.5*\i,2)
;	

\foreach \i in {1,...,3}
\draw[dotted] (0,0.5*\i) -- (7.5,0.5*\i)
;	

\foreach \i in {1,...,3}
\draw[dotted] (0,-0.5*\i) -- (7.5,-0.5*\i)
;

\draw[thick] 
(0,0.5) -- (0.5,0.5)
(0.5,1) -- (0,0.5)-- (0.5,0)
(1.5,1) -- (2,1.5) -- (3,1.5) -- (3.5,2) -- (4.5,2) -- (5,1.5)
(6,0.5) -- (6.5,0) -- (7.5,0);

\node at (1,0.75) {\(A\)};
\node[above] at (2.5,1.45) {\(f^{\alpha}\)};
\node[above] at (3.15,1.65) {\(s\)};
\node[above] at (4,1.95) {\(f^{\beta}\)};
\node at (5.5,1) {\(B\)};
\node[above] at (7,-0.05) {\(f^{\gamma}\)};
\node[left] at (0,2) {\(\frac{k+1}{2}\)};
\node[left] at (0,-1.5) {\(-\frac{k-1}{2}\)};
\node[right] at (7.5,2) {\(L_k\)};
\node at (10,0) {};
\node at (-2,0) {};

\end{tikzpicture}\\

\begin{tikzpicture}[scale=0.45]

\filldraw[fill=gray!30] (4.5,-1) -- (4.5,2) -- (6.5,2) -- (6.5,-1) --cycle;
\filldraw[fill=gray!30] (1,-1.5) -- (1,2) -- (3,2) -- (3,-1.5) --cycle;

\draw[gray, ->] (0,-1.5) -- (8,-1.5);
\draw[gray, ->] (0,-1.5) -- (0,2.5);
\draw[gray] (0,2) -- (7.5,2);
\draw[gray] (0,0.5) -- (7.5,0.5);

\foreach \i in {1,...,15}
\draw[dotted] (0.5*\i,-1.5) -- (0.5*\i,2)
;	

\foreach \i in {2,3}
\draw[dotted] (0,0.5*\i) -- (7.5,0.5*\i)
;	

\foreach \i in {0,1,...,3}
\draw[dotted] (0,-0.5*\i) -- (7.5,-0.5*\i)
;

\draw[thick] 
(0,-1.5) -- (1,-1.5) -- (1.5,-1)
(2.5,0) -- (3,0.5) -- (4,0.5) -- (4.5,1) -- (5,1)
(5,0.5) -- (4.5,1) -- (5,1.5)
(6,1.5) -- (6.5,2) -- (7.5,2);

\node[above] at (0.5,-1.55) {\(f^{\gamma}\)};
\node at (2,-0.5) {\(\overline{B}\)};
\node[above] at (3.5,0.45) {\(f^{\alpha}\)};
\node[above] at (4.15,0.65) {\(s\)};
\node at (5.5,1.25) {\(A\)};
\node[above] at (7,1.95) {\(f^{\beta}\)};
\node[left] at (0,2) {\(k\)};
\node[left] at (0,0.5) {\(\frac{k+1}{2}\)};
\node at (10,0) {};
\node at (-2,0) {};
\node[above] at (4,2.5) {\(\updownarrow\)};

\end{tikzpicture}

\subcaption{Case 1--i) for odd \(m\) and \(k\)}
\end{subfigure}
~
\begin{subfigure}[b]{0.45\textwidth}
\centering
\begin{tikzpicture}[scale=0.45]

\filldraw[fill=gray!30] (0,-1.5) -- (0,1.5) -- (2,1.5) -- (2,-1.5) --cycle;
\filldraw[fill=gray!30] (4.5,-1.5) -- (4.5,2) -- (6.5,2) -- (6.5,-1.5) --cycle;
\filldraw[fill=gray!30] (8,-1.5) -- (8,1.5) -- (10,1.5) -- (10,-1.5) --cycle;

\draw[gray, ->] (0,0) -- (11.5,0);
\draw[gray, ->] (0,-1.5) -- (0,2.5);
\draw[gray] (0,2) -- (11,2);

\foreach \i in {1,...,22}
\draw[dotted] (0.5*\i,-1.5) -- (0.5*\i,2)
;	

\foreach \i in {1,...,3}
\draw[dotted] (0,0.5*\i) -- (11,0.5*\i)
;	

\foreach \i in {1,...,3}
\draw[dotted] (0,-0.5*\i) -- (11,-0.5*\i)
;

\draw[thick] 
(0,0.5) -- (0.5,0.5)
(0.5,1) -- (0,0.5)-- (0.5,0)
(1.5,1) -- (2,1.5) -- (3,1.5) -- (3.5,2) -- (4.5,2) -- (5,1.5)
(6,0.5) -- (6.5,0) -- (7.5,0) -- (8,-0.5) -- (8.5, -0.5)
(8.5,-1) -- (8,-0.5) -- (8.5,0)
(9.5,-0.5) -- (10,0) -- (9.5,0.5)
(10,0) -- (11,0);

\node at (1,0.75) {\(A\)};
\node[above] at (2.5,1.45) {\(f^{\alpha}\)};
\node[above] at (3.15,1.65) {\(s\)};
\node[above] at (4,1.95) {\(f^{\beta}\)};
\node at (5.5,1) {\(B_1\)};
\node[above] at (7,-0.05) {\(f^{\delta}\)};
\node[left] at (7.87,-0.27) {\(b\)};
\node at (9,-0.75) {\(B_2\)};
\node[above] at (10.5,-0.05) {\(f^{\gamma}\)};
\node[left] at (0,2) {\(\frac{k+1}{2}\)};
\node[left] at (0,-1.5) {\(-\frac{k-1}{2}\)};
\node[right] at (11,2) {\(L_k\)};
\node at (13,0) {};
\node at (-2,0) {};

\end{tikzpicture}\\

\begin{tikzpicture}[scale=0.45]

\filldraw[fill=gray!30] (4.5,-1) -- (4.5,2) -- (6.5,2) -- (6.5,-1) --cycle;
\filldraw[fill=gray!30] (1,-1.5) -- (1,2) -- (3,2) -- (3,-1.5) --cycle;
\filldraw[fill=gray!30] (8,-1.5) -- (8,1.5) -- (11,1.5) -- (11,-1.5) --cycle;

\draw[gray, ->] (0,-1.5) -- (11.5,-1.5);
\draw[gray, ->] (0,-1.5) -- (0,2.5);
\draw[gray] (0,2) -- (11,2);
\draw[gray] (0,0.5) -- (11,0.5);

\foreach \i in {1,...,22}
\draw[dotted] (0.5*\i,-1.5) -- (0.5*\i,2)
;	

\foreach \i in {2,3}
\draw[dotted] (0,0.5*\i) -- (11,0.5*\i)
;	

\foreach \i in {0,1,...,3}
\draw[dotted] (0,-0.5*\i) -- (11,-0.5*\i)
;

\draw[thick] 
(0,-1.5) -- (1,-1.5) -- (1.5,-1)
(2.5,0) -- (3,0.5) -- (4,0.5) -- (4.5,1) -- (5,1)
(5,1.5) -- (4.5,1) -- (5, 0.5)
(6,1.5) -- (6.5,2) -- (7.5,2) -- (8,1.5) -- (8.5,1)
(8,1.5) -- (8.5,1.5);

\node[above] at (0.5,-1.55) {\(f^{\gamma}\)};
\node at (2,-0.5) {\(\overline{B_1}\)};
\node[above] at (3.5,0.45) {\(f^{\alpha}\)};
\node[above] at (4.15,0.65) {\(s\)};
\node at (5.5,1.25) {\(A\)};
\node[above] at (7,1.95) {\(f^{\beta}\)};
\node[above] at (7.85,1.55) {\(b\)};
\node at (9.5,0) {\(\overline{C}\)};
\node[left] at (0,2) {\(k\)};
\node[left] at (0,0.5) {\(\frac{k+1}{2}\)};
\node at (13,0) {};
\node at (-2,0) {};
\node[above] at (5.75,2.5) {\(\updownarrow\)};

\end{tikzpicture}

\subcaption{Case 1--ii) for odd \(m\) and \(k\)}
\end{subfigure}\\

\vspace{4mm}

\begin{subfigure}[b]{0.45\textwidth}
\centering
\begin{tikzpicture}[scale=0.45]
\node at (-2,0) {};

\filldraw[fill=gray!30] (0,-1.5) -- (0,2) -- (2,2) -- (2,-1.5) --cycle;
\filldraw[fill=gray!30] (4.5,-2) -- (4.5,2) -- (6.5,2) -- (6.5,-2) --cycle;

\draw[gray, ->] (0,0) -- (8,0);
\draw[gray, ->] (0,-2) -- (0,2.5);
\draw[gray] (0,-2) -- (7.5,-2);

\foreach \i in {1,...,15}
\draw[dotted] (0.5*\i,-2) -- (0.5*\i,2)
;	

\foreach \i in {1,...,4}
\draw[dotted] (0,0.5*\i) -- (7.5,0.5*\i)
;	

\foreach \i in {1,...,3}
\draw[dotted] (0,-0.5*\i) -- (7.5,-0.5*\i)
;

\draw[thick] 
(0,0) -- (0.5,0)
(0.5,0.5) -- (0,0)-- (0.5,-0.5)
(1.5,-1) -- (2,-1.5) -- (3,-1.5) -- (3.5,-2) -- (4.5,-2) -- (5,-1.5)
(6,-0.5) -- (6.5,0) -- (7.5,0);

\node at (1,-0.75) {\(A\)};
\node[below] at (2.5,-1.45) {\(f^{\alpha}\)};
\node[below] at (3.15,-1.65) {\(s\)};
\node[below] at (4,-1.95) {\(f^{\beta}\)};
\node at (5.5,-1) {\(B\)};
\node[below] at (7,-0.05) {\(f^{\gamma}\)};
\node[left] at (0,2) {\(\frac{k}{2}\)};
\node[left] at (0,-2) {\(-\frac{k}{2}\)};
\node[right] at (7.5,-2) {\(L_k\)};
\node at (10,0) {};
\node at (-1.5,0) {};

\end{tikzpicture}\\

\begin{tikzpicture}[scale=0.45]

\node at (-2,0) {};
\filldraw[fill=gray!30] (4.5,-1) -- (4.5,2.5) -- (6.5,2.5) -- (6.5,-1) --cycle;
\filldraw[fill=gray!30] (1,-1.5) -- (1,2.5) -- (3,2.5) -- (3,-1.5) --cycle;

\draw[gray, ->] (0,-1.5) -- (8,-1.5);
\draw[gray, ->] (0,-1.5) -- (0,3);
\draw[gray] (0,2.5) -- (7.5,2.5);
\draw[gray] (0,0.5) -- (7.5,0.5);

\foreach \i in {1,...,15}
\draw[dotted] (0.5*\i,-1.5) -- (0.5*\i,2.5)
;	

\foreach \i in {2,3,4}
\draw[dotted] (0,0.5*\i) -- (7.5,0.5*\i)
;	

\foreach \i in {0,1,...,3}
\draw[dotted] (0,-0.5*\i) -- (7.5,-0.5*\i)
;

\draw[thick] 
(0,-1.5) -- (1,-1.5) -- (1.5,-1)
(2.5,0) -- (3,0.5) -- (4,0.5) -- (4.5,1) -- (5,1)
(5,0.5) -- (4.5,1) -- (5,1.5)
(6,2) -- (6.5,2.5) -- (7.5,2.5);

\node[above] at (0.5,-1.55) {\(f^{\gamma}\)};
\node at (2,-0.5) {\(B\)};
\node[above] at (3.5,0.45) {\(f^{\alpha}\)};
\node[above] at (4.15,0.65) {\(\overline{s}\)};
\node at (5.5,1.75) {\(\overline{A}\)};
\node[above] at (7,2.45) {\(f^{\beta}\)};
\node[left] at (0,2.5) {\(k\)};
\node[left] at (0,0.5) {\(\frac{k}{2}\)};
\node at (10,0) {};
\node at (-1.5,0) {};
\node[above] at (4,3) {\(\updownarrow\)};

\end{tikzpicture}

\subcaption{Case 1--i) for even \(m\) and \(k\)}
\end{subfigure}
~
\begin{subfigure}[b]{0.45\textwidth}
\centering
\begin{tikzpicture}[scale=0.45]

\node at (-2,0) {};
\filldraw[fill=gray!30] (0,-1) -- (0,2.5) -- (2,2.5) -- (2,-1) --cycle;
\filldraw[fill=gray!30] (4.5,-1.5) -- (4.5,2.5) -- (6.5,2.5) -- (6.5,-1.5) --cycle;
\filldraw[fill=gray!30] (8,-1) -- (8,2.5) -- (10,2.5) -- (10,-1) --cycle;

\draw[gray, ->] (0,0.5) -- (11.5,0.5);
\draw[gray, ->] (0,-1.5) -- (0,3);
\draw[gray] (0,-1.5) -- (11,-1.5);

\foreach \i in {1,...,22}
\draw[dotted] (0.5*\i,-1.5) -- (0.5*\i,2.5)
;	

\foreach \i in {2,...,5}
\draw[dotted] (0,0.5*\i) -- (11,0.5*\i)
;	

\foreach \i in {0,1,2}
\draw[dotted] (0,-0.5*\i) -- (11,-0.5*\i)
;

\draw[thick] 
(0,0.5) -- (0.5,0.5)
(0.5,1) -- (0,0.5)-- (0.5,0)
(1.5,-0.5) -- (2,-1) -- (3,-1) -- (3.5,-1.5) -- (4.5,-1.5) -- (5,-1)
(6,0) -- (6.5,0.5) -- (7.5,0.5) -- (8,1) -- (8.5, 1)
(8.5,0.5) -- (8,1) -- (8.5,1.5)
(9.5,0) -- (10,0.5) -- (9.5,1)
(10,0.5) -- (11,0.5);

\node at (1,-0.25) {\(A\)};
\node[below] at (2.5,-0.95) {\(f^{\alpha}\)};
\node[below] at (3.15,-1.15) {\(s\)};
\node[below] at (4,-1.45) {\(f^{\beta}\)};
\node at (5.5,-0.5) {\(B_1\)};
\node[below] at (7,0.55) {\(f^{\delta}\)};
\node[left] at (7.95,0.95) {\(b\)};
\node at (9,1) {\(B_2\)};
\node[below] at (10.5,0.45) {\(f^{\gamma}\)};
\node[left] at (0,2.5) {\(\frac{k}{2}\)};
\node[left] at (0,-1.5) {\(-\frac{k}{2}\)};
\node[right] at (11,-1.5) {\(L_k\)};
\node at (13,0) {};
\node at (-1.5,0) {};

\end{tikzpicture}\\

\begin{tikzpicture}[scale=0.45]

\node at (-2,0) {};
\filldraw[fill=gray!30] (4.5,-1) -- (4.5,2.5) -- (6.5,2.5) -- (6.5,-1) --cycle;
\filldraw[fill=gray!30] (1,-1.5) -- (1,2.5) -- (3,2.5) -- (3,-1.5) --cycle;
\filldraw[fill=gray!30] (8,-1.5) -- (8,2) -- (11,2) -- (11,-1.5) --cycle;

\draw[gray, ->] (0,-1.5) -- (11.5,-1.5);
\draw[gray, ->] (0,-1.5) -- (0,3);
\draw[gray] (0,2.5) -- (11,2.5);
\draw[gray] (0,0.5) -- (11,0.5);

\foreach \i in {1,...,22}
\draw[dotted] (0.5*\i,-1.5) -- (0.5*\i,2.5)
;	

\foreach \i in {2,3,4}
\draw[dotted] (0,0.5*\i) -- (11,0.5*\i)
;	

\foreach \i in {0,1,...,3}
\draw[dotted] (0,-0.5*\i) -- (11,-0.5*\i)
;

\draw[thick] 
(0,-1.5) -- (1,-1.5) -- (1.5,-1)
(2.5,0) -- (3,0.5) -- (4,0.5) -- (4.5,1) -- (5,1)
(5,1.5) -- (4.5,1) -- (5, 0.5)
(6,2) -- (6.5,2.5) -- (7.5,2.5) -- (8,2) -- (8.5,1.5)
(8,2) -- (8.5,2);

\node[above] at (0.5,-1.55) {\(f^{\gamma}\)};
\node at (2,-0.5) {\(B_1\)};
\node[above] at (3.5,0.45) {\(f^{\alpha}\)};
\node[above] at (4.15,0.65) {\(\overline{s}\)};
\node at (5.5,1.5) {\(\overline{A}\)};
\node[above] at (7,2.45) {\(f^{\beta}\)};
\node[above] at (7.85,2.05) {\(\overline{b}\)};
\node at (9.5,0.5) {\(\overline{C}\)};
\node[left] at (0,2.5) {\(k\)};
\node[left] at (0,0.5) {\(\frac{k}{2}\)};
\node at (13,0) {};
\node at (-1.5,0) {};
\node[above] at (5.75,3) {\(\updownarrow\)};

\end{tikzpicture}

\subcaption{Case 1--ii) for even \(m\) and \(k\)}
\end{subfigure}
}
\caption{The bijection \(\phi_{m,k}\) in Case 1}
\label{fig:case1}
\end{figure}
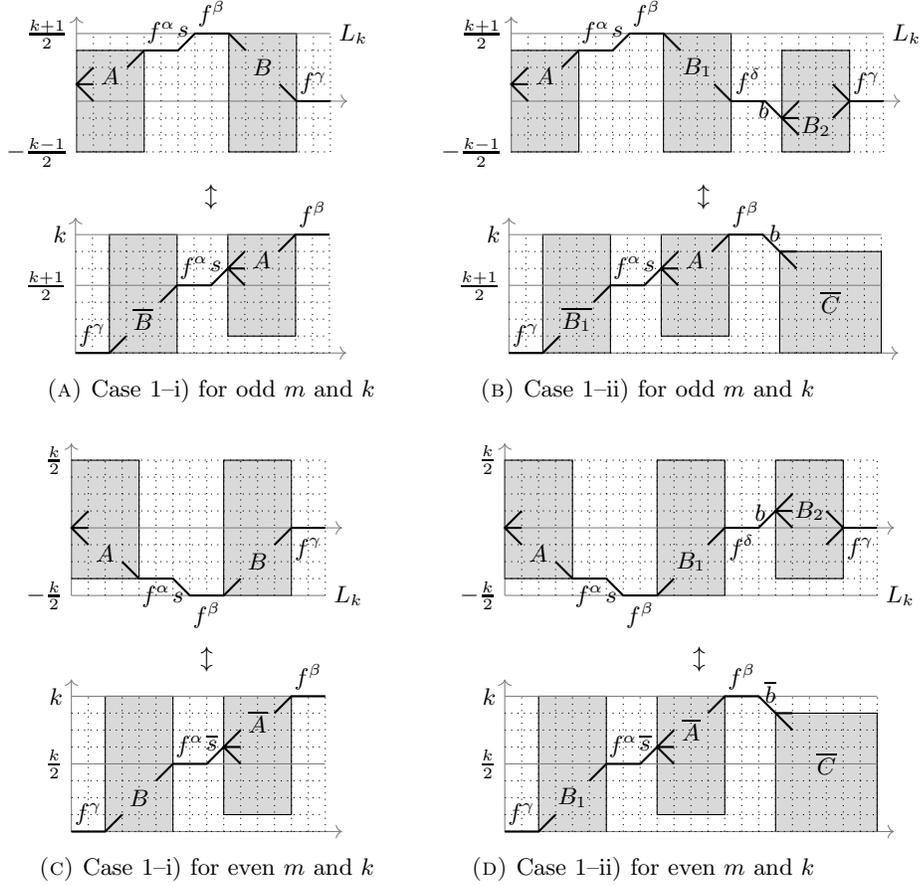

\begin{enumerate}
    \item[\textbf{Case 2}.] Let \(m\) and \(k\) have different parity with \(k>1\). In this case we write \(A\) as \(A_1aA_2\), where \(a\) is the first up (resp. down) step starting from the \(x\)-axis (resp. \(y=1\)) in \(P\) for odd (resp. even) \(k\). Here, \(A_1\) and \(A_2\) can be empty. Note that if \(A_2\) is non-empty, then it never ends with a flat step. Similar to the map in Case 1--ii), we define the map as \eqref{eq:case1-2}, where \(Q\) and \(C\) are given as follows.
    \begin{enumerate}
        \item[{\rm i)}] If there is no break step, then \(P\) can be written as 
        \begin{equation}\label{eq:case2-1P}
            P=A_1aA_2f^{\alpha}sf^{\beta}Bf^{\gamma},
        \end{equation}
        and we set
        \begin{equation}\label{eq:case2-1}
        Q:=f^{\gamma}\overline{B}f^{\alpha}sA_2f^{\beta}\overline{a}
        \quad
        \text{and}
        \quad
        C:=\begin{cases}
        \phi_{m',k'}(A_1) & \text{if \(k\) is odd},\\
        \phi_{m',k'}(\overline{A_1}) & \text{if \(k\) is even}.
        \end{cases}
        \end{equation}
        \item[{\rm ii)}] If there is the break step \(b\), then \(P\) can be written as 
        \begin{equation}\label{eq:case2-2P}
            P=A_1aA_2f^{\alpha}sf^{\beta}B_1f^{\delta}bB_2f^{\gamma},
        \end{equation}
        and we set
        \begin{equation}\label{eq:case2-2}
        Q:=f^{\gamma}\overline{B_1}f^{\alpha}sA_2f^{\beta}\overline{a}
        \quad
        \text{and}
        \quad
        C:=\begin{cases}
        \phi_{m',k'}(A_1\overline{b}\overline{B_2}f^{\delta}) & \text{if \(k\) is odd},\\
        \phi_{m',k'}(\overline{A_1}bB_2f^{\delta}) & \text{if \(k\) is even}.
        \end{cases}
        \end{equation}
    \end{enumerate}
    Note that \(m'\) is even in this case.
    The bijection in Case 2--ii) is illustrated in Figure~\ref{fig:case2}. By regarding \(B_1\) as \(B\) and \(f^{\delta}bB_2\) as \(\emptyset\) in this figure, we see the bijection in Case 2--i). 
\end{enumerate}

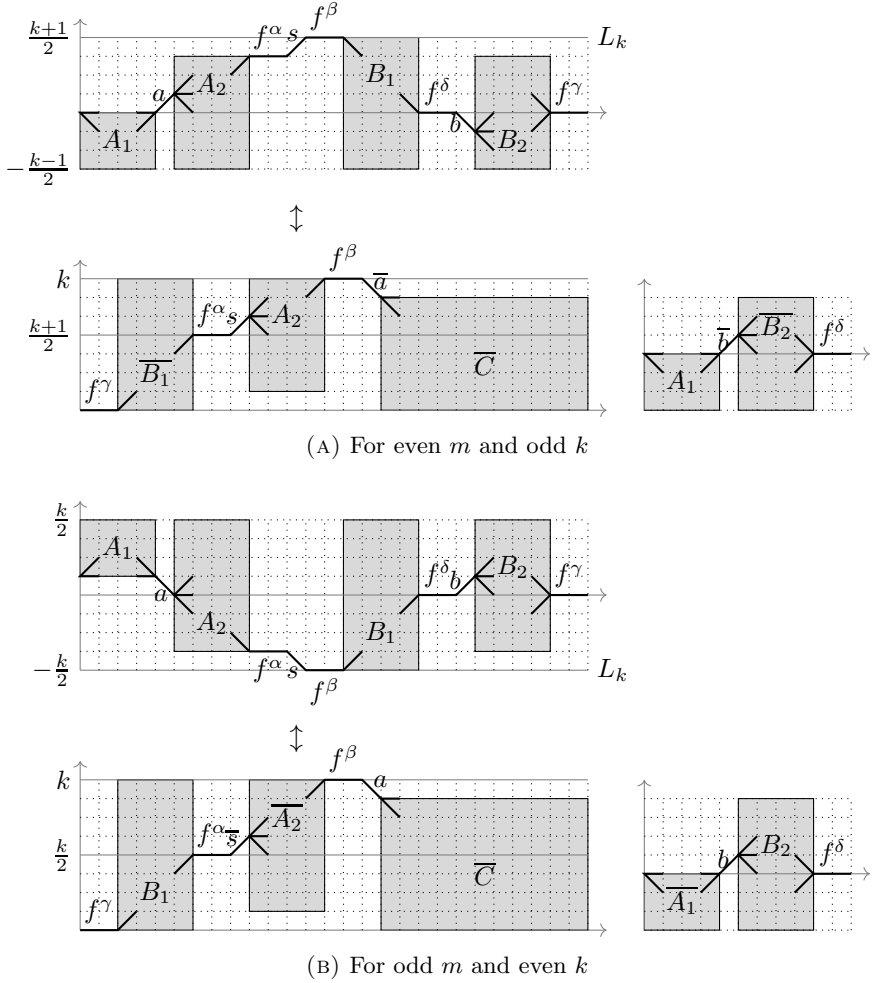
\begin{figure}[hbt]
\centering

\begin{subfigure}[b]{\textwidth}
\centering

\begin{tikzpicture}[scale=0.5]

\filldraw[fill=gray!30] (0,0) -- (0,-1.5) -- (2,-1.5) -- (2,0) --cycle;
\filldraw[fill=gray!30] (2.5,-1.5) -- (2.5,1.5) -- (4.5,1.5) -- (4.5,-1.5) --cycle;
\filldraw[fill=gray!30] (7,-1.5) -- (7,2) -- (9,2) -- (9,-1.5) --cycle;
\filldraw[fill=gray!30] (10.5,-1.5) -- (10.5,1.5) -- (12.5,1.5) -- (12.5,-1.5) --cycle;

\draw[gray, ->] (0,0) -- (14,0);
\draw[gray, ->] (0,-1.5) -- (0,2.5);
\draw[gray] (0,2) -- (13.5,2);

\foreach \i in {1,...,27}
\draw[dotted] (0.5*\i,-1.5) -- (0.5*\i,2)
;	

\foreach \i in {1,...,3}
\draw[dotted] (0,0.5*\i) -- (13.5,0.5*\i)
;	

\foreach \i in {1,...,3}
\draw[dotted] (0,-0.5*\i) -- (13.5,-0.5*\i)
;

\draw[thick] 
(0.5,0) -- (0,0) -- (0.5,-0.5)
(1.5,0) -- (2,0) -- (1.5,-0.5)
(2,0) -- (2.5,0.5) -- (3,0.5)
(3,1) -- (2.5,0.5)-- (3,0)
(4,1) -- (4.5,1.5) -- (5.5,1.5) -- (6,2) -- (7,2) -- (7.5,1.5)
(8.5,0.5) -- (9,0) -- (10,0) -- (10.5,-0.5) -- (11, -0.5)
(11,-1) -- (10.5,-0.5) -- (11,0)
(12,-0.5) -- (12.5,0) -- (12,0.5)
(12.5,0) -- (13.5,0);

\node at (1,-0.75) {\(A_1\)};
\node[above] at (2.1,0.05) {\(a\)};
\node at (3.5,0.75) {\(A_2\)};
\node[above] at (5,1.45) {\(f^{\alpha}\)};
\node[above] at (5.65,1.65) {\(s\)};
\node[above] at (6.5,1.95) {\(f^{\beta}\)};
\node at (8,1) {\(B_1\)};
\node[above] at (9.5,-0.05) {\(f^{\delta}\)};
\node[left] at (10.37,-0.27) {\(b\)};
\node at (11.5,-0.75) {\(B_2\)};
\node[above] at (13,-0.05) {\(f^{\gamma}\)};
\node[left] at (0,2) {\(\frac{k+1}{2}\)};
\node[left] at (0,-1.5) {\(-\frac{k-1}{2}\)};
\node[right] at (13.5,2) {\(L_k\)};
\node at (21.5,0) {};
\node at (-2,0) {};

\end{tikzpicture}\\

\begin{tikzpicture}[scale=0.5]

\filldraw[fill=gray!30] (1,-1.5) -- (1,2) -- (3,2) -- (3,-1.5) --cycle;
\filldraw[fill=gray!30] (4.5,-1) -- (4.5,2) -- (6.5,2) -- (6.5,-1) --cycle;
\filldraw[fill=gray!30] (8,-1.5) -- (8,1.5) -- (13.5,1.5) -- (13.5,-1.5) --cycle;

\filldraw[fill=gray!30] (15,0) -- (15,-1.5) -- (17,-1.5) -- (17,0) --cycle;

\filldraw[fill=gray!30] (17.5,-1.5) -- (17.5,1.5) -- (19.5,1.5) -- (19.5,-1.5) --cycle;

\draw[gray, ->] (0,-1.5) -- (14,-1.5);
\draw[gray, ->] (0,-1.5) -- (0,2.5);
\draw[gray] (0,2) -- (13.5,2);
\draw[gray] (0,0.5) -- (13.5,0.5);

\draw[gray, ->] (15,0) -- (21,0);
\draw[gray, ->] (15,-1.5) -- (15,2);

\foreach \i in {1,...,27}
\draw[dotted] (0.5*\i,-1.5) -- (0.5*\i,2)
;	

\foreach \i in {2,3}
\draw[dotted] (0,0.5*\i) -- (13.5,0.5*\i)
;	

\foreach \i in {0,1,2}
\draw[dotted] (0,-0.5*\i) -- (13.5,-0.5*\i)
;

\foreach \i in {1,...,11}
\draw[dotted] (15+0.5*\i,-1.5) -- (15+0.5*\i,1.5)
;

\foreach \i in {1,2,3}
\draw[dotted] (15,0.5*\i) -- (20.5,0.5*\i)
;	

\foreach \i in {1,2,3}
\draw[dotted] (15,-0.5*\i) -- (20.5,-0.5*\i)
;

\draw[thick] 
(0,-1.5) -- (1,-1.5) -- (1.5,-1) 
(2.5,0) -- (3,0.5) -- (4,0.5) -- (4.5,1) -- (5,1)
(5,1.5) -- (4.5,1) -- (5,0.5)
(6,1.5) -- (6.5,2) -- (7.5,2) -- (8,1.5) -- (8.5,1)
(8,1.5) -- (8.5,1.5)
;

\draw[thick] 
(15.5,0) -- (15,0) -- (15.5,-0.5)
(16.5,0) -- (17,0)
(16.5,-0.5) -- (17,0) -- (17.5,0.5) -- (18,0.5)
(18,0) -- (17.5,0.5) -- (18,1)
(19,0.5) -- (19.5,0) -- (19,-0.5)
(19.5,0) -- (20.5,0)
;

\node[above] at (0.5,-1.55) {\(f^{\gamma}\)};
\node at (2,-0.5) {\(\overline{B_1}\)};
\node[above] at (3.5,0.45) {\(f^{\alpha}\)};
\node[left] at (4.45,0.95) {\(s\)};
\node at (5.5,1) {\(A_2\)};
\node[above] at (7,1.95) {\(f^{\beta}\)};
\node[right] at (7.55,1.95) {\(\overline{a}\)};
\node at (10.75,-0.25) {\(\overline{C}\)};
\node[left] at (0,0.5) {\(\frac{k+1}{2}\)};
\node[left] at (0,2) {\(k\)};
\node at (16,-0.75) {\(A_1\)};
\node at (17.1,0.4) {\(\overline{b}\)};
\node at (18.5,0.75) {\(\overline{B_2}\)};
\node[above] at (20,-0.05) {\(f^{\delta}\)};
\node at (21.5,0) {};
\node at (-2,0) {};

\node[above] at (5.75,3) {\(\updownarrow\)};

\end{tikzpicture}

\subcaption{For even \(m\) and odd \(k\)}
\end{subfigure}

\vspace{4mm}

\begin{subfigure}[b]{\textwidth}
\centering

\begin{tikzpicture}[scale=0.5]

\filldraw[fill=gray!30] (0,0.5) -- (0,2) -- (2,2) -- (2,0.5) --cycle;
\filldraw[fill=gray!30] (2.5,-1.5) -- (2.5,2) -- (4.5,2) -- (4.5,-1.5) --cycle;
\filldraw[fill=gray!30] (7,-2) -- (7,2) -- (9,2) -- (9,-2) --cycle;
\filldraw[fill=gray!30] (10.5,-1.5) -- (10.5,2) -- (12.5,2) -- (12.5,-1.5) --cycle;

\draw[gray, ->] (0,0) -- (14,0);
\draw[gray, ->] (0,-2) -- (0,2.5);
\draw[gray] (0,-2) -- (13.5,-2);

\foreach \i in {1,...,27}
\draw[dotted] (0.5*\i,-2) -- (0.5*\i,2)
;	

\foreach \i in {1,...,4}
\draw[dotted] (0,0.5*\i) -- (13.5,0.5*\i)
;	

\foreach \i in {1,...,3}
\draw[dotted] (0,-0.5*\i) -- (13.5,-0.5*\i)
;

\draw[thick] 
(0.5,0.5) -- (0,0.5) -- (0.5,1)
(1.5,0.5) -- (2,0.5) -- (1.5,1)
(2,0.5) -- (2.5,0) -- (3,0)
(3,0.5) -- (2.5,0)-- (3,-0.5)
(4,-1) -- (4.5,-1.5) -- (5.5,-1.5) -- (6,-2) -- (7,-2) -- (7.5,-1.5)
(8.5,-0.5) -- (9,0) -- (10,0) -- (10.5,0.5) -- (11, 0.5)
(11,0) -- (10.5,0.5) -- (11,1)
(12,-0.5) -- (12.5,0) -- (12,0.5)
(12.5,0) -- (13.5,0);

\node at (1,1.25) {\(A_1\)};
\node[below] at (2.15,0.4) {\(a\)};
\node at (3.5,-0.75) {\(A_2\)};
\node[below] at (5,-1.45) {\(f^{\alpha}\)};
\node[below] at (5.65,-1.65) {\(s\)};
\node[below] at (6.5,-1.95) {\(f^{\beta}\)};
\node at (8,-1) {\(B_1\)};
\node[above] at (9.5,-0.05) {\(f^{\delta}\)};
\node[left] at (10.37,0.4) {\(b\)};
\node at (11.5,0.75) {\(B_2\)};
\node[above] at (13,-0.05) {\(f^{\gamma}\)};
\node[left] at (0,2) {\(\frac{k}{2}\)};
\node[left] at (0,-2) {\(-\frac{k}{2}\)};
\node[right] at (13.5,-2) {\(L_k\)};
\node at (21.5,0) {};
\node at (-2,0) {};

\end{tikzpicture}\\

\begin{tikzpicture}[scale=0.5]

\filldraw[fill=gray!30] (1,-1.5) -- (1,2.5) -- (3,2.5) -- (3,-1.5) --cycle;
\filldraw[fill=gray!30] (4.5,-1) -- (4.5,2.5) -- (6.5,2.5) -- (6.5,-1) --cycle;
\filldraw[fill=gray!30] (8,-1.5) -- (8,2) -- (13.5,2) -- (13.5,-1.5) --cycle;

\filldraw[fill=gray!30] (15,0) -- (15,-1.5) -- (17,-1.5) -- (17,0) --cycle;
\filldraw[fill=gray!30] (17.5,-1.5) -- (17.5,2) -- (19.5,2) -- (19.5,-1.5) --cycle;

\draw[gray, ->] (0,-1.5) -- (14,-1.5);
\draw[gray, ->] (0,-1.5) -- (0,3);
\draw[gray] (0,2.5) -- (13.5,2.5);
\draw[gray] (0,0.5) -- (13.5,0.5);

\draw[gray, ->] (15,0) -- (21,0);
\draw[gray, ->] (15,-1.5) -- (15,2.5);

\foreach \i in {1,...,27}
\draw[dotted] (0.5*\i,-1.5) -- (0.5*\i,2.5)
;	

\foreach \i in {2,3,4}
\draw[dotted] (0,0.5*\i) -- (13.5,0.5*\i)
;	

\foreach \i in {0,1,2}
\draw[dotted] (0,-0.5*\i) -- (13.5,-0.5*\i)
;

\foreach \i in {1,...,11}
\draw[dotted] (15+0.5*\i,-1.5) -- (15+0.5*\i,2)
;

\foreach \i in {1,2,3,4}
\draw[dotted] (15,0.5*\i) -- (20.5,0.5*\i)
;	

\foreach \i in {1,2,3}
\draw[dotted] (15,-0.5*\i) -- (20.5,-0.5*\i)
;

\draw[thick] 
(0,-1.5) -- (1,-1.5) -- (1.5,-1) 
(2.5,0) -- (3,0.5) -- (4,0.5) -- (4.5,1) -- (5,1)
(5,1.5) -- (4.5,1) -- (5,0.5)
(6,2) -- (6.5,2.5) -- (7.5,2.5) -- (8,2) -- (8.5,1.5)
(8,2) -- (8.5,2)
;

\draw[thick] 
(15.5,0) -- (15,0) -- (15.5,-0.5)
(16.5,0) -- (17,0)
(16.5,-0.5) -- (17,0) -- (17.5,0.5) -- (18,0.5)
(18,0) -- (17.5,0.5) -- (18,1)
(19,0.5) -- (19.5,0) -- (19,-0.5)
(19.5,0) -- (20.5,0)
;

\node[above] at (0.5,-1.55) {\(f^{\gamma}\)};
\node at (2,-0.5) {\(B_1\)};
\node[above] at (3.5,0.45) {\(f^{\alpha}\)};
\node[left] at (4.45,0.95) {\(\overline{s}\)};
\node at (5.5,1.5) {\(\overline{A_2}\)};
\node[above] at (7,2.45) {\(f^{\beta}\)};
\node[right] at (7.55,2.45) {\(a\)};
\node at (10.75,0) {\(\overline{C}\)};
\node[left] at (0,0.5) {\(\frac{k}{2}\)};
\node[left] at (0,2.5) {\(k\)};
\node at (16,-0.7) {\(\overline{A_1}\)};
\node at (17.1,0.4) {\(b\)};
\node at (18.5,0.75) {\(B_2\)};
\node[above] at (20,-0.05) {\(f^{\delta}\)};
\node at (21.5,0) {};
\node at (-2,0) {};

\node[above] at (5.75,3) {\(\updownarrow\)};

\end{tikzpicture}

\subcaption{For odd \(m\) and even \(k\)}
\end{subfigure}

\caption{The bijection \(\phi_{m,k}\) in Case 2--ii)}
\label{fig:case2}
\end{figure}

\begin{lem}
For given non-negative integers \(m\) and \(k\), the map \(\phi_{m,k}\) is well-defined.
\end{lem}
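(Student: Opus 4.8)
The plan is to prove well-definedness by strong induction on the length $n=m+r$, which breaks into three tasks: (a) every $P$ admits a \emph{unique} decomposition in the applicable case, so that $\phi_{m,k}(P)$ is single-valued; (b) the recursion terminates and every recursive call $\phi_{m',k'}$ receives an argument that genuinely lies in $\bigcup_{r'\ge0}\oF(m',r',k')$; and (c) the output lies in the codomain $\bigcup_{r\ge0}\oM(m,r,k)$. The base case is Case 0 ($k\le1$), where $\phi_{m,k}(P)=\overleftarrow P$ is plainly a Motzkin prefix in the strip $0\le y\le k$ touching $y=k$. Throughout I would exploit one elementary parity identity: if a free Motzkin path has $m$ non-flat steps, then writing $u,d$ for its numbers of up and down steps, $u-d=-h_0$ and $u+d=m$ (with $h_0\in\{0,1\}$ its starting height) force $h_0\equiv m\pmod2$. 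Thus the starting height of every path in the domain is determined by the parity of $m$, and this is the structural reason behind the split into Case 1 and Case 2.

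First I would settle the decompositions. Since $P\in\oF(m,r,k)$ touches $L_k$, the special step $s$ exists, and then $\alpha,\beta,\gamma$ are forced as the maximal flat runs adjacent to $s$ and at the end, while $A,B$ are the remaining blocks; maximality of $\alpha$ and $\gamma$ gives that $A,B$ do not end in a flat step. The presence or absence of the break step $b$ is a genuine dichotomy handled by the sub-cases i)/ii), and when $b$ exists it determines $\delta,B_1,B_2$ uniquely. The one decomposition needing an argument is $A=A_1aA_2$ in Case 2: here I would use the parity identity to show $a$ must occur inside $A$. For odd $k$ (so $m$ even and $h_0=0$) the path starts on the $x$-axis yet must climb to $L_k$ at height $(k+1)/2\ge2$, so it takes a first up step from height $0$, necessarily before the first arrival at the top, i.e.\ before $s$; the even-$k$ case is symmetric with a first down step from $y=1$. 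In Case 1 the opposite parity lets the path leave from $h_0$ straight toward $L_k$ without such a forced step, which is exactly why no $a$ is extracted there.

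Next I would verify that each recursive argument is legal. Reading off the figures, the block fed to $\phi_{m',k'}$ (for instance $\overline{B_2}f^\delta$ in Case 1--ii, or $A_1$ in Case 2--i) is, after the reflections, a free Motzkin path: it ends at height $0$, and its starting height is consistent with its own non-flat count $m'$ by the parity identity, which simultaneously verifies the stated parities ($m'$ odd in Case 1--ii, $m'$ even in Case 2). Its vertical extent determines a width $k'<k$ for which the block lies in the strip and touches $L_{k'}$, so it belongs to $\bigcup_{r'}\oF(m',r',k')$. Because the argument is always a \emph{proper} subpath (at least the non-flat steps $s$, and $b$ or $a$, are removed), we have $m'+r'<m+r$, so the induction applies and the recursion terminates; the degenerate possibility is excluded since $\oF(0,r',k')=\varnothing$ whenever $k'>1$ (a flat-only path cannot reach a nonzero line), so the recursion never stalls above the base case.

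Finally, for the codomain I would note that $\overline{(\cdot)}$, $\overleftarrow{(\cdot)}$ and concatenation preserve the number of flat steps and the number of non-flat steps (reflection only swaps $u\leftrightarrow d$), so together with the inductive hypothesis on $C$ the output has length $m+r$ with exactly $r$ flat steps, placing it in the right $\oM(m,r,k)$ once the geometry checks out. The geometric check is the main obstacle: I would track the heights of the endpoints of every block and of the special, break, and turning steps through the reflections and the reversal, confirming that $\phi_{m,k}(P)$ begins at the origin, never drops below $y=0$, stays weakly below $y=k$, and actually reaches $y=k$, and---crucially---that the blocks abut continuously, i.e.\ that $Q$ terminates at exactly the height where $\overline C$ begins so that the recursively produced piece slots into the strip $0\le y\le k$ without violating either boundary. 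This height bookkeeping across the reflections, uniform over the four parity combinations and the two break-step sub-cases, is where essentially all the work lies; the figures encode the intended alignments and I would verify them block by block.
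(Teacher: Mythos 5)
Your plan is structurally sound and in fact broader in scope than the paper's own proof: the paper never discusses single-valuedness of the decomposition, legality of the recursive arguments, or termination (it handles termination implicitly by inducting on $k$, since every recursive call has $k'<k$), whereas you address all three, and your parity identity $h_0\equiv m \pmod 2$ is a clean justification of facts the paper uses silently (e.g., that the step $a$ must exist inside $A$ in Case 2, and that $m'$ has the stated parity in each recursive call). Your induction on the length $m+r$ is a valid alternative scheme to the paper's induction on $k$; both work, because each recursive argument is a proper subpath lying in a strictly narrower strip.

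The gap is that you defer exactly the step that constitutes the paper's entire proof: the verification that the output lands in $\oM(m,r,k)$. You write that you ``would track the heights of the endpoints of every block'' and ``verify them block by block,'' but this bookkeeping is never carried out, and it is the content of the lemma rather than a routine afterthought. Concretely, the paper records that in Case 1--i) with $k$ odd, $B$ runs from $y=\lf (k+1)/2\rf$ down to the $x$-axis inside the strip $-\lf k/2\rf \le y\le \lf (k+1)/2\rf$, hence $f^{\gamma}\overline{B}f^{\alpha}$ is a Motzkin prefix ending at height $\lf (k+1)/2\rf$ inside $0\le y\le k$, while $sAf^{\beta}$ then climbs from $\lf (k+1)/2\rf$ to exactly $y=k$ inside $1\le y\le k$; and in the recursive cases, that $Q$ (or $\overline{Q}$) starts on the $x$-axis, touches $y=k$, and ends on $y=k-1$, so that by the induction hypothesis $\overline{C}$ continues from $y=k-1$ and stays in $k-k'-1\le y\le k-1$. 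Until you perform this height accounting, uniformly over the parity combinations and the break-step sub-cases, the claim that $\phi_{m,k}(P)\in\oM(m,r,k)$ --- including that it actually touches $y=k$, which is required for membership in $\oM(m,r,k)$ rather than merely $\M(m,r,k)$ --- remains unproved.
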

\begin{proof}

Let \(P \in \oF(m,r,k)\). In Case 0, 
it is clear that \(\phi_{m,k}(P)=\overleftarrow{P} \in \oM(m,r,k)\). Now consider Case 1--i). 
In this case, for a path \(P\) as in \eqref{eq:basic}, we define \(\phi_{m,k}(P)\) as \eqref{eq:case1-1}. 
If \(k\) is odd (resp. even), then \(A\) is a subpath of \(P\) that starts from the line \(y=1\) (resp. \(x\)-axis), ends on the line \(y=(-1)^{k-1}\lf (k-1)/2 \rf \), and is contained in the strip \(-\lf (k-1)/2 \rf \le y \le \lf k/2 \rf\), while \(B\) is a subpath that starts from the line \(y=(-1)^{k-1}\lf (k+1)/2 \rf\), ends on the \(x\)-axis, and is contained in the strip \(-\lf k/2\rf \le y \le \lf (k+1)/2 \rf\).
Hence, the prefix  \(f^{\gamma}\overline{B}f^{\alpha}\) (resp. \(f^{\gamma}Bf^{\alpha}\)) of \(\phi_{m,k}(P)\) is a Motzkin prefix that ends at the line \(y=\lf (k+1)/2 \rf\) and is contained in the strip \(0\le y \le k\). It follows that the remaining path \(sAf^{\beta}\) (resp. \(\overline{sA}f^{\beta}\)) starts from the line \(y=\lf (k+1)/2 \rf\), ends on the line \(y=k\), and
is contained in the strip \(1\le y \le k\). 
Hence, the prefix \(f^{\gamma}\overline{B}f^{\alpha}\) (resp. \(f^{\gamma}Bf^{\alpha}\)) of \(\phi_{m,k}(P)\) is a Motzkin prefix that ends at the line \(y=\lf (k+1)/2 \rf\) and is contained in the strip \(0\le y \le k\), and the remaining subpath \(sAf^{\beta}\) (resp. \(\overline{sA}f^{\beta}\)) starts from the line \(y=\lf (k+1)/2 \rf\), ends on the line \(y=k\), and
is contained in the strip \(1\le y \le k\) for odd (resp. even) $k$.
Therefore, \(\phi_{m,k}(P) \in \oM(m,r,k)\).

For the remaining cases, we write \(A\) as \(A_1 a A_2\) and \(B\) as \(B_1 f^{\delta}b B_2\) if necessary. Now we use the induction on \(k\). For any \(k'<k\), suppose that \(\phi_{m',k'}(P^*) \in \oM(m',r',k')\) for any paths \(P^* \in \oF(m',r',k')\). Let \(P\in\oF(m,r,k)\), \(\phi_{m,k}(P)\) is defined as \(Q\overline{C}\) or \(\overline{Q}\overline{C}\), where \(Q\) and \(C\) are of the forms in \eqref{eq:case1-2}, \eqref{eq:case2-1}, or \eqref{eq:case2-2}. In any cases, similar to Case 1--i), \(Q\) or \(\overline{Q}\) is a prefix of \(\phi_{m,k}(P)\) that starts from the \(x\)-axis, touches the line \(y=k\), ends on the line \(y=k-1\), and is contained in the strip \(0 \le y \le k\). Since \(C \in \oM(m',r',k')\) with \(k'<k\), \(\overline{C}\) is a suffix of \(\phi_{m,k}(P)\) that starts from the line \(y=k-1\) and is contained in the strip \(k-k'-1 \le y \le k-1\) by the induction hypothesis. Thus, we conclude that \(\phi_{m,k}(P)\in\oM(m,r,k)\).
\end{proof}

\begin{ex}
For given free Motzkin paths, let us apply the map \(\phi_{m,k}\). 
\begin{enumerate}
    \item[(a)] For the path
    \[
    P_1=fduduufdfduufdff\in\overline{\F}(10,6,2),
    \]
    by  applying Case 1--ii) and Case 0, we get 
    \[
    \phi_{10,2}(P_1)=ffuduufdfdufuddf\in\overline{\M}(10,6,2)
    \]
    since \(A=\emptyset\), \(\beta=\delta=0\), and  \(C=\phi_{1,1}(B_2f^{\delta})\) in \eqref{eq:case1-2P}, and \(\phi_{1,1}(fd)=\overleftarrow{fd}=uf\).
    \item[(b)]
    For the path 
    \[
    P_2=fduduufudfdduufudfdffdfufuddfuf\in \overline{\F}(20,11,3),
    \] 
    by applying Case 2-ii), we obtain
    \[
    \phi_{20,3}(P_2)=fufuuddfdufufudffduddfufudfduuf\in\overline{\M}(20,11,3)
    \]
    since \(A_2=\emptyset\), \(\beta=0\), and \(C=\phi_{10,2}(A_1\overline{b}\overline{B_2}f^{\delta})=\phi_{10,2}(P_1)\) in \eqref{eq:case2-2P}, where \(P_1\) is the path given in \emph{(a)}. 
\end{enumerate}
See Figure~\ref{fig:ex} for further details.
\end{ex}

\begin{figure}[hbt]
\small{

\begin{subfigure}[b]{\textwidth}

\begin{tikzpicture}[scale=0.57]
\node at (-1.25,0) {};
\filldraw[fill=gray!30] (1,-0.5) -- (1,0.5) -- (5.5,0.5) -- (5.5,-0.5) --cycle;
\filldraw[fill=gray!30] (6,0) -- (6,0.5) -- (7,0.5) -- (7,0) --cycle;

\draw[gray, ->] (0,0) -- (8.5,0);
\draw[gray, ->] (0,-0.5) -- (0,1);
\draw[gray] (0,-0.5) -- (8,-0.5);

\foreach \i in {1,...,16}
\draw[dotted] (0.5*\i,-0.5) -- (0.5*\i,0.5)
;	

\draw[dotted] (0,0.5) -- (8,0.5);

\draw[thick] 
(0,0) -- (0.5,0) -- (1,-0.5) -- (1.5,0) -- (2,-0.5) -- (2.5,0) -- (3,0.5) -- (3.5,0.5) -- (4,0) -- (4.5,0) -- (5,-0.5) -- (5.5,0) -- (6,0.5) -- (6.5,0.5) -- (7,0) -- (8,0);

\node[below] at (0.25,0.1) {\(f^{\alpha}\)};
\node[below] at (0.65,-0.1) {\(s\)};
\node at (3.25,0) {\(B_1\)};
\node[above] at (5.85,-0.1) {\(b\)};
\node at (6.5,0.2) {\(B_2\)};
\node[above] at (7.5,-0.05) {\(f^{\gamma}\)};
\node[left] at (0,0.5) {\(1\)};
\node[left] at (0,-0.5) {\(-1\)};
\node[right] at (8,-0.5) {\(L_2\)};

\end{tikzpicture}
~
\begin{tikzpicture}[scale=0.57]

\filldraw[fill=gray!30] (0,-0.5) -- (0,0.5) -- (4.5,0.5) -- (4.5,-0.5) --cycle;
\filldraw[fill=gray!30] (6,0) -- (6,-0.5) -- (7,-0.5) -- (7,0) --cycle;

\draw[gray, ->] (-1,-0.5) -- (7.5,-0.5);
\draw[gray, ->] (-1,-0.5) -- (-1,1);
\draw[gray] (-1,0.5) -- (7,0.5);

\foreach \i in {-1,0,1,...,14}
\draw[dotted] (0.5*\i,-0.5) -- (0.5*\i,0.5)
;	

\draw[dotted] (-1,0) -- (7,0);

\draw[thick] 
(-1,-0.5) -- (0,-0.5) -- (0.5,0) -- (1,-0.5) -- (1.5,0) -- (2,0.5) -- (2.5,0.5) -- (3,0) -- (3.5,0) -- (4,-0.5) -- (4.5,0) -- (5,0) -- (5.5,0.5) -- (6,0) -- (6.5,-0.5) -- (7,-0.5);

\node at (-2.2,0) {\(\leftrightarrow\)};
\node[above] at (-0.5,-0.55) {\(f^{\gamma}\)};
\node at (2.25,0) {\(B_1\)};
\node[above] at (4.75,0) {\(f^{\alpha}\)};
\node[above] at (5.2,0.1) {\(\overline{s}\)};
\node[above] at (5.9,0) {\(\overline{b}\)};
\node at (6.5,-0.25) {\(\overline{C}\)};
\node[left] at (-1,0.5) {\(2\)};
\node[left] at (-1,-0.5) {\(0\)};

\end{tikzpicture}

\subcaption{A bijection \(\phi_{10,2}\) in Case 1--ii)}
\end{subfigure}

\vspace{4mm}

\begin{subfigure}[b]{\textwidth}
\centering

\begin{tikzpicture}[scale=0.7]

\filldraw[fill=gray!30] (0,0) -- (0,-0.5) -- (2.5,-0.5) -- (2.5,0) --cycle;
\filldraw[fill=gray!30] (4,-0.5) -- (4,1) -- (9.5,1) -- (9.5,-0.5) --cycle;
\filldraw[fill=gray!30] (11,-0.5) -- (11,0.5) -- (15,0.5) -- (15,-0.5) --cycle;

\draw[gray, ->] (0,0) -- (16,0);
\draw[gray, ->] (0,-0.5) -- (0,1.5);
\draw[gray] (0,1) -- (15.5,1);

\foreach \i in {1,...,31}
\draw[dotted] (0.5*\i,-0.5) -- (0.5*\i,1)
;	

\draw[dotted] (0,0.5) -- (15.5,0.5);	

\draw[dotted] (0,-0.5) -- (15.5,-0.5);

\draw[thick] 
(0,0) -- (0.5, 0) -- (1,-0.5) -- (1.5,0) -- (2,-0.5) -- (2.5,0) -- (3,0.5) -- (3.5,0.5) -- (4,1) -- (4.5,0.5) -- (5,0.5) -- (5.5,0) -- (6,-0.5) -- (6.5,0) -- (7,0.5) -- (7.5,0.5) -- (8,1) -- (8.5,0.5) -- (9,0.5) -- (9.5,0) -- (10,0) -- (10.5,0) -- (11,-0.5) -- (11.5,-0.5) -- (12,0) -- (12.5,0) -- (13,0.5) -- (13.5,0) -- (14,-0.5) -- (14.5,-0.5) -- (15,0) -- (15.5,0);

\node at (1.25,-0.26) {\(A_1\)};
\node[above] at (2.6,0.1) {\(a\)};
\node[above] at (3.25,0.4) {\(f^{\alpha}\)};
\node[above] at (3.6,0.55) {\(s\)};
\node at (6.75,-0.25) {\(B_1\)};
\node[above] at (10,-0.05) {\(f^{\delta}\)};
\node[left] at (10.87,-0.28) {\(b\)};
\node at (13,-0.25) {\(B_2\)};
\node[above] at (15.25,-0.05) {\(f^{\gamma}\)};
\node[left] at (0,1) {\(2\)};
\node[left] at (0,-0.5) {\(-1\)};
\node[right] at (15.5,1) {\(L_3\)};
\node at (17,0) {};
\node at (-1,0) {};

\end{tikzpicture}\\[2ex]

\begin{tikzpicture}[scale=0.7]

\filldraw[fill=gray!30] (0.5,-0.5) -- (0.5,1) -- (6,1) -- (6,-0.5) --cycle;
\filldraw[fill=gray!30] (7.5,-0.5) -- (7.5,0.5) -- (15.5,0.5) -- (15.5,-0.5) --cycle;

\draw[gray, ->] (0,-0.5) -- (16,-0.5);
\draw[gray, ->] (0,-0.5) -- (0,1.5);
\draw[gray] (0,1) -- (15.5,1);

\foreach \i in {1,...,31}
\draw[dotted] (0.5*\i,-0.5) -- (0.5*\i,1)
;	

\draw[dotted] (0,0.5) -- (15.5,0.5);	

\draw[dotted] (0,0) -- (15.5,0);

\draw[thick] 
(0,-0.5) -- (0.5, -0.5) -- (1,0) -- (1.5,0) -- (2,0.5) -- (2.5,1) -- (3,0.5) -- (3.5,0) -- (4,0) -- (4.5,-0.5) -- (5,0) -- (5.5,0) -- (6,0.5) -- (6.5,0.5) -- (7,1) -- (7.5,0.5) -- (8,0.5) -- (8.5,0.5) -- (9,0) -- (9.5,0.5) -- (10,0) -- (10.5,-0.5) -- (11,-0.5) -- (11.5,0) -- (12,0) -- (12.5,0.5) -- (13,0) -- (13.5,0) -- (14,-0.5) -- (14.5,0) -- (15,0.5) -- (15.5,0.5);

\node at (8,2) {\(\updownarrow\)};
\node[above] at (0.25,-0.55) {\(f^{\gamma}\)};
\node at (3.25,-0.25) {\(\overline{B_1}\)};
\node[above] at (6.25,0.4) {\(f^{\alpha}\)};
\node[above] at (6.6,0.55) {\(s\)};
\node[above] at (7.4,0.54) {\(\overline{a}\)};
\node at (11.5,-0.25) {\(\overline{C}\)};
\node[left] at (0,1) {\(3\)};
\node[left] at (0,-0.5) {\(0\)};

\node at (17,0) {};
\node at (-1,0) {};

\end{tikzpicture}

\subcaption{A bijection \(\phi_{20,3}\) in Case 2--ii)}
\end{subfigure}
}
\caption{Examples of the map \(\phi_{m,k}\)}
\label{fig:ex}
\end{figure}
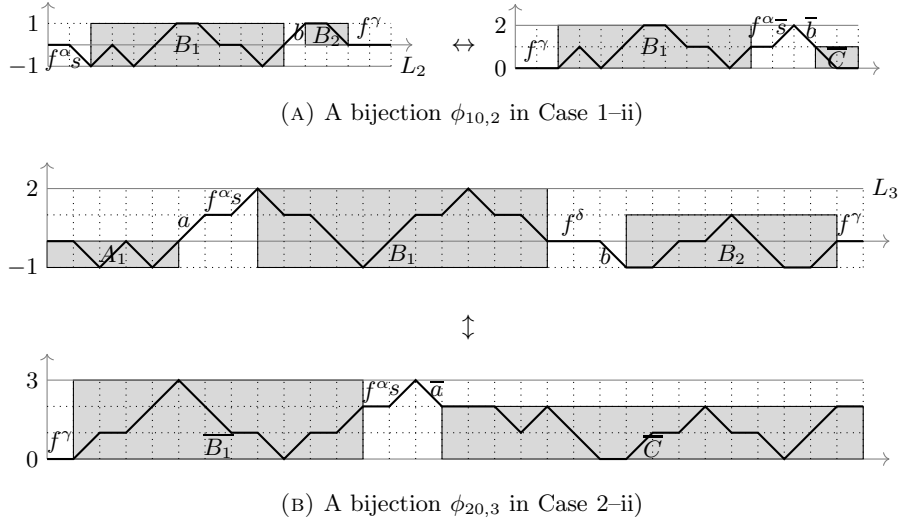


\subsection{Map \(\psi_{m,k}\)}\label{sec:1-1}

Now we define a map 
\[
\psi_{m,k}: \bigcup_{r\geq 0}\oM(m,r,k) \rightarrow \bigcup_{r\geq 0}\oF(m,r,k)
\] 
and show that \(\psi_{m,k}=\phi_{m,k}^{-1}\).
Let $S$ be a path in the set $\oM(m,r,k)$ for some $r\ge 0$.

\begin{enumerate}
    \item[\textbf{Case 0}.]
    For $k=0$ or $1$, we define $\psi_{m,k}(S)=\overleftarrow{S}$.
\end{enumerate}

Recall that the last vertex on the line $y=k$ is called the turning point of a path in \(\oM(m,r,k)\).
We define a \emph{critical point} of $S$ as the rightmost point on the $x$-axis which locates before the turning point.
\begin{enumerate}
    \item[\textbf{Case I}.]     
    For $k>1$, assume that $S$ is a path which ends on the line $y=k$. Note that $m$ and $k$ have the same parity and we write
    \[
    S=f^{\gamma}B^* f^{\alpha}u^*A^* f^{\beta},
    \]
    where $u^*$ is the first up step starting from the line $y=\lf (k+1)/2 \rf$ after the critical point of $S$, $\gamma\ge 0$ (resp. $\beta \ge 0$) is the maximum number of consecutive initial (resp. final) flat steps of $P$, and $\alpha \ge 0$ is the maximum number of consecutive flat steps before the step $u^*$. Hence, $B^*$ (resp. $A^*$) is the subpath of $S$ such that it starts from the $x$-axis (resp. $y=\lf k/2 \rf +1$), ends on the line $y=\lf k/2 \rf$ (resp. $y=k$), and is contained in the strip $0\le y \le k$ (resp. $1\le y \le k$). We define
    \begin{equation}\label{eq:case1-1i}
        \psi_{m,k}(S):=\begin{cases}
        A^* f^{\alpha}u^* f^{\beta}\overline{B^*}f^{\gamma} & \text{if \(k\) is odd},\\
        \overline{A^*}f^{\alpha}\overline{u^*}f^{\beta}B^* f^{\gamma} & \text{if \(k\) is even}.
        \end{cases}
    \end{equation}
    
    \item[\textbf{Case II}.] Suppose that $S$ is a path which does not end on the line $y=k$ for $k>1$. In this case, we write
    \[
    S=f^{\gamma}B^* f^{\alpha}u^*A^* f^{\beta}d^*\overline{C^*},
    \]
    where $u^*,A^*,B^*,\alpha,\beta,\gamma$ is defined as in Case I, $d^*$ is the last down step starting from the line $y=k$, and $\overline{C^*}$ is a suffix of $S$ after the step $d^*$. Note that $C^*\in \oM(m',r',k')$ for some $k'<k$ since $C^*$ is contained in the strip $0 \le y \le k-1$.

    \begin{enumerate}
        \item[{\rm i)}] Let $m$ and $k$ have the same parity, which follows that $m'$ is odd. We write $\psi_{m',k'}(C^*)=B^{\bullet}f^{\delta}$, where $\delta\ge 0$ is the maximum number of consecutive flat steps at the suffix of $\psi_{m',k'}(C^*)$. We set
        \begin{equation}\label{eq:case1-2i}
            \psi_{m,k}(S):=\begin{cases}
            A^* f^{\alpha}u^* f^{\beta}\overline{B^*}f^{\delta}d^*\overline{B^{\bullet}}f^{\gamma} & \text{if \(k\) is odd},\\
            \overline{A^*}f^{\alpha}\overline{u^*}f^{\beta}B^*f^{\delta}\overline{d^*}B^{\bullet} f^{\gamma} & \text{if \(k\) is even}.
            \end{cases}
        \end{equation}

        \item[{\rm ii)}] Let $m$ and $k$ have different parity. In this case, $m'$ is even. We divide two cases whether $\psi_{m',k'}(C^*)$ goes above the $x$-axis or not.

        If $\psi_{m',k'}(C^*)$ does not go above the $x$-axis, then we write $\psi_{m',k'}(C^*)=A^{\bullet}$ and define
        \begin{equation}\label{eq:case2-1i}
            \psi_{m,k}(S):=\begin{cases}
            A^{\bullet}\overline{d^*} A^* f^{\alpha}u^* f^{\beta}\overline{B^*}f^{\gamma} & \text{if \(k\) is odd},\\
            \overline{A^{\bullet}}d^*\overline{A^*}f^{\alpha}\overline{u^*}f^{\beta}B^*f^{\gamma} & \text{if \(k\) is even}.
            \end{cases}
        \end{equation}

        If $\psi_{m',k'}(C^*)$ goes above the $x$-axis, then we write $\psi_{m',k'}(C^*)=A^{\bullet}u^{\bullet}B^{\bullet}f^{\delta}$, where $u^{\bullet}$ is the first up step starting from the $x$-axis and $\delta\ge 0$ is the maximum number of consecutive flat steps at the suffix of $\psi_{m',k'}(C^*)$. We define
        \begin{equation}\label{eq:case2-2i}
            \psi_{m,k}(S):=\begin{cases}
            A^{\bullet}\overline{d^*} A^* f^{\alpha}u^* f^{\beta}\overline{B^*}f^{\delta}\overline{u^{\bullet}}\overline{B^{\bullet}}f^{\gamma} & \text{if \(k\) is odd},\\
            \overline{A^{\bullet}}d^*\overline{A^*}f^{\alpha}\overline{u^*}f^{\beta}B^* f^{\delta}u^{\bullet}B^{\bullet}f^{\gamma} & \text{if \(k\) is even}.
            \end{cases}
        \end{equation}
        \end{enumerate}
\end{enumerate}

\begin{lem}\label{lem:inverse}
The map \(\psi_{m,k}\) is the inverse map of \(\phi_{m,k}\).
\end{lem}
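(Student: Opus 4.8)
The plan is to prove by induction on \(k\) that \(\psi_{m,k}\circ\phi_{m,k}\) is the identity on \(\bigcup_{r\ge 0}\oF(m,r,k)\) and that \(\phi_{m,k}\circ\psi_{m,k}\) is the identity on \(\bigcup_{r\ge 0}\oM(m,r,k)\). Since the index sets are infinite (for fixed \(m,k\) the number \(r\) of flat steps is unbounded), a cardinality shortcut is unavailable and a genuine two-sided verification is needed; however, because \(\psi_{m,k}\) is defined by mirror-image recursions that solve each clause of \(\phi_{m,k}\) for its input, once one composition is handled the other follows by identical bookkeeping, so I will concentrate on \(\psi_{m,k}(\phi_{m,k}(P))=P\). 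The base case \(k\in\{0,1\}\) is immediate: there \(\phi_{m,k}(P)=\overleftarrow{P}\) and \(\psi_{m,k}(S)=\overleftarrow{S}\), and \(\overleftarrow{\overleftarrow{P}}=P\) because reversing the step order twice and complementing each step twice returns every step to itself.

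For \(k>1\) I would assume \(\psi_{m',k'}=\phi_{m',k'}^{-1}\) for all \(k'<k\) and run through the cases defining \(\phi_{m,k}\), matching each to the case of \(\psi_{m,k}\) that inverts it. The organizing observation, already recorded in the well-definedness lemma, is that \(Q\) (or \(\overline{Q}\)) is the prefix of \(\phi_{m,k}(P)\) that rises from the \(x\)-axis, touches \(y=k\), and ends on \(y=k-1\); consequently \(\phi_{m,k}(P)\) ends on the line \(y=k\) exactly when there is no break step and \(m\equiv k\pmod 2\), which is precisely Case 1--i). This dichotomy tells \(\psi_{m,k}\) which branch to take: Case 1--i) is inverted by \(\psi\)'s Case I, Case 1--ii) by Case II--i), and the two subcases of Case 2 by the two subcases of \(\psi\)'s Case II--ii) (the ``does not go above the \(x\)-axis'' branch matching Case 2--i), where \(\psi_{m',k'}(C^*)=A_1\) stays weakly below the \(x\)-axis, and the ``goes above'' branch matching Case 2--ii)).

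The heart of the argument is to check, in each case, that the extremal marks used by \(\psi_{m,k}\) single out the images of the data used by \(\phi_{m,k}\). Writing \(S=\phi_{m,k}(P)\), I would verify that the critical point of \(S\) together with the first up step \(u^*\) leaving height \(\lf (k+1)/2 \rf\) recovers the special step, i.e.\ \(u^*=s\) (with \(u^*=\overline{s}\) after the complement when \(k\) is even), \(A^*=A\), and \(B^*=\overline{B}\) for odd \(k\) (respectively \(B^*=B\) for even \(k\)), while the initial, medial, and final flat runs read off the same exponents \(\gamma,\beta,\alpha\); likewise that the last down step \(d^*\) leaving \(y=k\) is the image of the break step \(b\) and that the suffix \(\overline{C^*}\) equals \(\overline{C}\). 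Once \(C^*=C=\phi_{m',k'}(\,\cdots)\) is identified, the induction hypothesis gives \(\psi_{m',k'}(C^*)=(\,\cdots)\), and substituting this back into the formulas \eqref{eq:case1-2i}--\eqref{eq:case2-2i} reassembles \(B_2,\delta\) (and \(A_1,b\) in Case 2) so that \(\psi_{m,k}(S)\) literally reproduces \(P\) as written in \eqref{eq:basic}, \eqref{eq:case1-2P}, \eqref{eq:case2-1P}, or \eqref{eq:case2-2P}. I expect the main obstacle to be exactly this geometric bookkeeping: tracking heights through the reflections \(\overline{(\cdot)}\) and the reversals across all four parity combinations, and confirming that ``first up step from \(\lf (k+1)/2 \rf\) after the critical point'' and ``last down step from \(y=k\)'' can never mis-identify a step other than the images of \(s\) and \(b\). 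This is where the maximality conditions defining \(\alpha,\beta,\gamma,\delta\) and the standing facts that \(A,A_2,B,B_1,B_2\) never end in a flat step must be invoked, so that the flat runs are absorbed into the correct \(f^{\bullet}\) blocks and no boundary touch is missed.

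Finally, to upgrade \(\psi_{m,k}\) from a left inverse to a genuine two-sided inverse, I would run the symmetric induction verifying \(\phi_{m,k}(\psi_{m,k}(S))=S\): starting from \(S\in\oM(m,r,k)\), the same case split (whether \(S\) ends on \(y=k\), and the parities of \(m\) and \(k\)) determines which clause of \(\psi\) applies, its decomposition of \(S\) supplies the data \(A^*,B^*,u^*,d^*,C^*\), and feeding \(\psi_{m,k}(S)\) into \(\phi_{m,k}\) the marks \(s,b,\alpha,\beta,\gamma,\delta\) are recovered by the dual extremal conditions, with \(\phi_{m',k'}(\psi_{m',k'}(C^*))=C^*\) by the induction hypothesis. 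Because each clause of \(\psi\) is the formal inverse of the matching clause of \(\phi\), this reverse identity follows by the same case correspondence and the same inductive hypothesis verbatim, completing the proof that \(\psi_{m,k}=\phi_{m,k}^{-1}\).
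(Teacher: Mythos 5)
Your proposal follows essentially the same route as the paper's own proof: induction on \(k\), the base case \(k\in\{0,1\}\) via \(\overleftarrow{\overleftarrow{P}}=P\), and then, for \(k>1\), matching each clause of \(\phi_{m,k}\) with the clause of \(\psi_{m,k}\) that undoes it --- your pairing (Case 1--i) with Case I, Case 1--ii) with Case II--i), and the two subcases of Case 2 with the two branches of Case II--ii)) is exactly the one the paper uses, as are the identifications \(s=u^*\) (up to complementation according to the parity of \(k\)), \(b=d^*\), \(A\) or \(A_2\) with \(A^*\), \(B\) or \(B_1\) with \(B^*\) or \(\overline{B^*}\), and the appeal to the induction hypothesis to recover \(B_2, \delta\) (and \(A_1\)) from \(C^*=C\). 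The one substantive point where you go beyond the paper is your insistence on verifying both compositions. The paper's written proof checks only \(\psi_{m,k}(\phi_{m,k}(P))=P\) for \(P\in\oF(m,r,k)\); by itself this gives injectivity of \(\phi_{m,k}\) and surjectivity of \(\psi_{m,k}\), not that the two maps are mutually inverse bijections, and no cardinality shortcut can close that gap because the equality \(|\oF(m,r,k)|=|\oM(m,r,k)|\) is not known in advance --- it is precisely the content of Theorem~\ref{thm:main}. (Your stated reason, the infinitude of the unions over \(r\), is a slightly different obstruction --- \(\phi_{m,k}\) preserves \(r\), so the restrictions act between finite sets --- but your conclusion that a genuine two-sided check is required is correct.) So your plan is, if anything, the more complete one; the part you defer as ``identical bookkeeping,'' namely running the symmetric induction for \(\phi_{m,k}(\psi_{m,k}(S))=S\), is exactly the verification the paper leaves implicit.
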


\begin{proof}
For $k=0$ or $1$, it is clear that $\psi_{m,k}(\phi_{m,k}(P))=P$ for any path $P \in \oF(m,r,k)$ by the construction.

From now on, we set $k>1$.
Let $P \in \oF(m,r,k)$ when $m$ and $k$ have the same parity and there is no break step in $P$ so that $P$ is represented as $P=Af^{\alpha}sf^{\beta}Bf^{\gamma}$. When $k$ is odd (resp. even), it follows from \eqref{eq:case1-1} and \eqref{eq:case1-1i} that $\psi_{m,k}(\phi_{m,k}(P))=P$ since $A=A^*$ (resp. $A=\overline{A^*}$), $s=u^*$ (resp. $s=\overline{u^*}$), and $B=\overline{B^*}$ (resp. $B=B^*$), where $S=\phi_{m,k}(P)\in \oM(m,r,k)$.

Now, we assume that $\psi_{m',k'}(P^*) \in \oF(m',r',k')$ for any path $P^*$ with $k'<k$.
Let $P \in \oF(m,r,k)$ when $m$ and $k$ have the same parity and there is a break step $b$ in $P$ so that $P$ is represented as $P=Af^{\alpha}sf^{\beta}B_1 f^{\delta}bB_2 f^{\gamma}$. For odd (resp. even) $k$, according to \eqref{eq:case1-2} and \eqref{eq:case1-2i}, $\psi_{m,k}(\phi_{m,k}(P))=P$ since  
$A=A^*$ (resp. $A=\overline{A^*}$), $s=u^*$ (resp. $s=\overline{u^*}$), $B_1=\overline{B^*}$ (resp. $B_1=B^*$), $b=d^*$ (resp. $b=\overline{d^*}$), and $B_2=\overline{B^\bullet}$ (resp. $B_2=B^\bullet$).

Similarly, by \eqref{eq:case2-1}, \eqref{eq:case2-2}, \eqref{eq:case2-1i}, and \eqref{eq:case2-2i}, we can see that $\psi_{m,k}(\phi_{m,k}(P))=P$, where $P \in \oF(m,r,k)$ when $m$ and $k$ have different parity with $k>1$.
\end{proof}

\begin{ex}
For given Motzkin prefixes, 
\begin{align*}
S_1&=uufufdddufuuf \in \overline{\M}(9,4,3),\\
S_2&=uufuufdddfdfuuudfddf \in \overline{\M}(14,6,4),\\
S_3&=fuuufuduufuffdddfddfuuufufdddufuuf \in \overline{\M}(23,11,6),
\end{align*}
we have
\begin{align*}
\psi_{9,3}(S_1)&=ufddfdfuuudfd \in \overline{\F}(9,4,3),\\
\psi_{14,4}(S_2)&=dfdfuuuufddfdfuuudfd \in \overline{\F}(14,6,4),\\
\psi_{23,6}(S_3)&=ufufddduddfdfdffuuuuufddfdfuuudfdf \in \overline{\F}(23,11,6).
\end{align*}
See Figure~\ref{fig:ex2} for further details.
\end{ex}

\begin{figure}[hbt]
\small{

\begin{subfigure}[b]{0.35\textwidth}

\begin{tikzpicture}[scale=0.6]

\filldraw[fill=gray!30] (0,0) -- (0,1.5) -- (5.5,1.5) -- (5.5,0) --cycle;

\draw[gray, ->] (0,0) -- (7,0);
\draw[gray, ->] (0,0) -- (0,2);
\draw[gray] (0,1.5) -- (6.5,1.5);

\foreach \i in {1,...,13}
\draw[dotted] (0.5*\i,0) -- (0.5*\i,1.5)
;	

\draw[dotted] (0,0.5) -- (6.5,0.5);
\draw[dotted] (0,1) -- (6.5,1);

\draw[thick] 
(0,0) -- (0.5,0.5) -- (1,1) -- (1.5,1) -- (2,1.5) -- (2.5,1.5) -- (3,1) -- (3.5,0.5) -- (4,0) -- (4.5,0.5) -- (5,0.5) -- (5.5,1) -- (6,1.5) -- (6.5,1.5);

\node at (2.75,0.75) {\(B^*\)};
\node[above] at (5.9,0.8) {\(u^*\)};
\node[above] at (6.27,0.7) {\(f^{\beta}\)};
\node[left] at (0,1.5) {\(3\)};
\node[left] at (0,0) {\(0\)};

\node at (-1,2.1) {};
\end{tikzpicture}\\

\begin{tikzpicture}[scale=0.6]

\filldraw[fill=gray!30] (1,-0.5) -- (1,1) -- (6.5,1) -- (6.5,-0.5) --cycle;

\node at (-1,-1.2) {};
\draw[gray, ->] (0,0) -- (7,0);
\draw[gray, ->] (0,-0.5) -- (0,1.5);
\draw[gray] (0,1) -- (6.5,1);

\foreach \i in {1,...,13}
\draw[dotted] (0.5*\i,-0.5) -- (0.5*\i,1)
;	

\draw[dotted] (0,0.5) -- (6.5,0.5);
\draw[dotted] (0,-0.5) -- (6.5,-0.5);

\draw[thick] 
(0,0.5) -- (0.5,1) -- (1,1) -- (1.5,0.5) -- (2,0) -- (2.5,0) -- (3,-0.5) -- (3.5,-0.5) -- (4,0) -- (4.5,0.5) -- (5,1) -- (5.5,0.5) -- (6,0.5) -- (6.5,0);

\node at (3.25,2.2) {\(\updownarrow\)};
\node[below] at (0.4,1) {\(u^*\)};
\node[below] at (0.75,1.05) {\(f^{\beta}\)};
\node at (3.75,0.25) {\(\overline{B^*}\)};
\node[left] at (0,1) {\(2\)};
\node[left] at (0,-0.5) {\(-1\)};
\node[right] at (6.5,1) {\(L_3\)};

\end{tikzpicture}

\subcaption{A bijection \(\psi_{9,3}\) in Case I}
\end{subfigure}
~~~
\begin{subfigure}[b]{0.65\textwidth}
\centering

\begin{tikzpicture}[scale=0.6]

\filldraw[fill=gray!30] (0,0) -- (0,2) -- (1,2) -- (1,0) --cycle;
\filldraw[fill=gray!30] (2,0.5) -- (2,2) -- (2.5,2) -- (2.5,0.5) --cycle;
\filldraw[fill=gray!30] (3.5,0) -- (3.5,1.5) -- (10,1.5) -- (10,0) --cycle;

\draw[gray, ->] (0,0) -- (10.5,0);
\draw[gray, ->] (0,0) -- (0,2.5);
\draw[gray] (0,2) -- (10,2);

\foreach \i in {1,...,20}
\draw[dotted] (0.5*\i,0) -- (0.5*\i,2);	

\foreach \i in {1,2,3}
\draw[dotted] (0,0.5*\i) -- (10,0.5*\i);

\draw[thick] 
(0,0) -- (0.5, 0.5) -- (1,1) -- (1.5,1) -- (2,1.5) -- (2.5,2) -- (3,2) -- (3.5,1.5) -- (4,1) -- (4.5,0.5) -- (5,0.5) -- (5.5,0) -- (6,0) -- (6.5,0.5) -- (7,1) -- (7.5,1.5) -- (8,1) -- (8.5,1) -- (9,0.5) -- (9.5,0) -- (10,0);

\node at (0.5,1) {\(B^*\)};
\node[above] at (1.25,0.95) {\(f^{\alpha}\)};
\node[above] at (1.64,1.05) {\(u^*\)};
\node at (2.25,1.25) {\(A^*\)};
\node[above] at (2.75,1.95) {\(f^{\beta}\)};
\node[above] at (3.42,1.65) {\(d^*\)};
\node at (6.75,0.75) {\(\overline{C^*}\)};
\node[left] at (0,2) {\(4\)};
\node[left] at (0,0) {\(0\)};

\end{tikzpicture}\\

\begin{tikzpicture}[scale=0.6]

\filldraw[fill=gray!30] (0,1) -- (0,-0.5) -- (0.5,-0.5) -- (0.5,1) --cycle;
\filldraw[fill=gray!30] (2,-1) -- (2,1) -- (3,1) -- (3,-1) --cycle;
\filldraw[fill=gray!30] (3.5,-0.5) -- (3.5,1) -- (10,1) -- (10,-0.5) --cycle;

\draw[gray, ->] (0,0) -- (10.5,0);
\draw[gray, ->] (0,-1) -- (0,1.5);
\draw[gray] (0,-1) -- (10,-1);

\foreach \i in {1,...,20}
\draw[dotted] (0.5*\i,-1) -- (0.5*\i,1)
;	

\foreach \i in {-1,1,2}
\draw[dotted] (0,0.5*\i) -- (10,0.5*\i);	

\draw[thick] 
(0,0) -- (0.5, -0.5) -- (1,-0.5) -- (1.5,-1) -- (2,-1) -- (2.5,-0.5) -- (3,0) -- (3.5,0.5) -- (4,1) -- (4.5,1) -- (5,0.5) -- (5.5,0) -- (6,0) -- (6.5,-0.5) -- (7,-0.5) -- (7.5,0) -- (8,0.5) -- (8.5,1) -- (9,0.5) -- (9.5,0.5) -- (10,0);

\node at (5.25,2) {\(\updownarrow\)};
\node at (0.25,0.25) {\(\overline{A^*}\)};
\node[below] at (0.65,-0.4) {\(f^{\alpha}\)};
\node[below] at (1.05,-0.4) {\(\overline{u^*}\)};
\node[below] at (1.75,-0.95) {\(f^{\beta}\)};
\node at (2.5,0) {\(B^*\)};
\node[above] at (3.2,0) {\(\overline{d^*}\)};
\node at (6.75,0.25) {\(B^{\bullet}\)};
\node[left] at (0,1) {\(2\)};
\node[left] at (0,-1) {\(-2\)};
\node[right] at (10,-1) {\(L_4\)};

\end{tikzpicture}

\subcaption{A bijection \(\psi_{14,4}\) in Case II--i)}
\end{subfigure}

\vspace{4mm}

\begin{subfigure}[b]{\textwidth}
\centering

\begin{tikzpicture}[scale=0.6]

\filldraw[fill=gray!30] (0.5,0) -- (0.5,3) -- (2,3) -- (2,0) --cycle;
\filldraw[fill=gray!30] (3,0.5) -- (3,3) -- (5.5,3) -- (5.5,0.5) --cycle;
\filldraw[fill=gray!30] (7,0) -- (7,2.5) -- (17,2.5) -- (17,0) --cycle;

\draw[gray, ->] (0,0) -- (17.5,0);
\draw[gray, ->] (0,0) -- (0,3.5);
\draw[gray] (0,3) -- (17,3);

\foreach \i in {1,...,34}
\draw[dotted] (0.5*\i,0) -- (0.5*\i,3);	

\foreach \i in {1,...,5}
\draw[dotted] (0,0.5*\i) -- (17,0.5*\i);	

\draw[thick] 
(0,0) -- (0.5,0) -- (1,0.5) -- (1.5,1) -- (2,1.5) -- (2.5,1.5) -- (3,2) -- (3.5,1.5) -- (4,2) -- (4.5,2.5) -- (5,2.5) -- (5.5,3) -- (6,3) -- (6.5,3) -- (7,2.5) -- (7.5,2) -- (8,1.5) -- (8.5,1.5) -- (9,1) -- (9.5,0.5) -- (10,0.5) -- (10.5,1) -- (11,1.5) -- (11.5,2) -- (12,2) -- (12.5,2.5) -- (13,2.5) -- (13.5,2) -- (14,1.5) -- (14.5,1) -- (15,1.5) -- (15.5,1.5) -- (16,2) -- (16.5,2.5) -- (17,2.5);

\node[above] at (0.25,-0.05) {\(f^{\gamma}\)};
\node at (1.25,1.5) {\(B^*\)};
\node[above] at (2.25,1.45) {\(f^{\alpha}\)};
\node[above] at (2.62,1.55) {\(u*\)};
\node at (4.25,1.75) {\(A^*\)};
\node[above] at (6,2.95) {\(f^{\beta}\)};
\node[above] at (6.9,2.6) {\(d^*\)};
\node at (12,1.25) {\(\overline{C^*}\)};
\node[left] at (0,3) {\(6\)};
\node[left] at (0,0) {\(0\)};

\node at (18,0) {};
\node at (-1,0) {};

\end{tikzpicture}\\[2ex]

\begin{tikzpicture}[scale=0.6]

\filldraw[fill=gray!30] (0,0.5) -- (0,1.5) -- (3,1.5) -- (3,0.5) --cycle;
\filldraw[fill=gray!30] (3.5,1.5) -- (3.5,-1) -- (6,-1) -- (6,1.5) --cycle;
\filldraw[fill=gray!30] (8,-1.5) -- (8,1.5) -- (9.5,1.5) -- (9.5,-1.5) --cycle;
\filldraw[fill=gray!30] (10,-1) -- (10,1.5) -- (16.5,1.5) -- (16.5,-1) --cycle;

\draw[gray, ->] (0,0) -- (17.5,0);
\draw[gray, ->] (0,-1.5) -- (0,2);
\draw[gray] (0,-1.5) -- (17,-1.5);

\foreach \i in {1,...,34}
\draw[dotted] (0.5*\i,-1.5) -- (0.5*\i,1.5)
;	

\foreach \i in {-2,-1,1,2,3}
\draw[dotted] (0,0.5*\i) -- (17,0.5*\i);	

\draw[thick] 
(0,0.5) -- (0.5,1) -- (1,1) -- (1.5,1.5) -- (2,1.5) -- (2.5,1) -- (3,0.5) -- (3.5,0) -- (4,0.5) -- (4.5,0) -- (5,-0.5) -- (5.5,-0.5) -- (6,-1) -- (6.5,-1) -- (7,-1.5) -- (7.5,-1.5) -- (8,-1.5) -- (8.5,-1) -- (9,-0.5) -- (9.5,0) -- (10,0.5) -- (10.5,1) -- (11,1) -- (11.5,0.5) -- (12,0) -- (12.5,0) -- (13,-0.5) -- (13.5,-0.5) -- (14,0) -- (14.5,0.5) -- (15,1) -- (15.5,0.5) -- (16,0.5) -- (16.5,0) -- (17,0);

\node at (8.75,2.5) {\(\updownarrow\)};
\node at (1.5,1) {\(\overline{A^{\bullet}}\)};
\node[above] at (3.3,0.2) {\(d^*\)};
\node at (4.75,0.25) {\(\overline{A^*}\)};
\node[below] at (6.15,-0.9) {\(f^{\alpha}\)};
\node[below] at (6.55,-0.9) {\(\overline{u^*}\)};
\node[below] at (7.5,-1.45) {\(f^{\beta}\)};
\node at (8.75,0) {\(B^*\)};
\node at (9.75,0.4) {\(u^{\bullet}\)};
\node at (13.25,0.25) {\(B^{\bullet}\)};
\node[above] at (16.75,-0.05) {\(f^{\gamma}\)};
\node[left] at (0,1.5) {\(3\)};
\node[left] at (0,-1.5) {\(-3\)};
\node[right] at (17,-1.5) {\(L_6\)};
\node at (18,0) {};
\node at (-1,0) {};

\end{tikzpicture}

\subcaption{A bijection \(\psi_{23,6}\) in Case II--ii)}
\end{subfigure}
}
\caption{Examples of the map \(\psi_{m,k}\)}
\label{fig:ex2}
\end{figure}
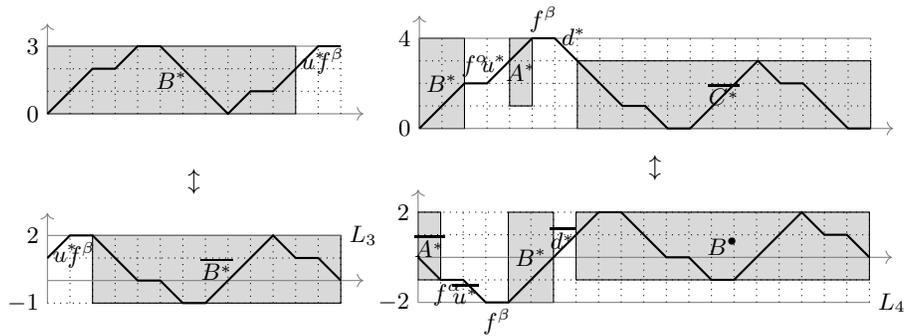
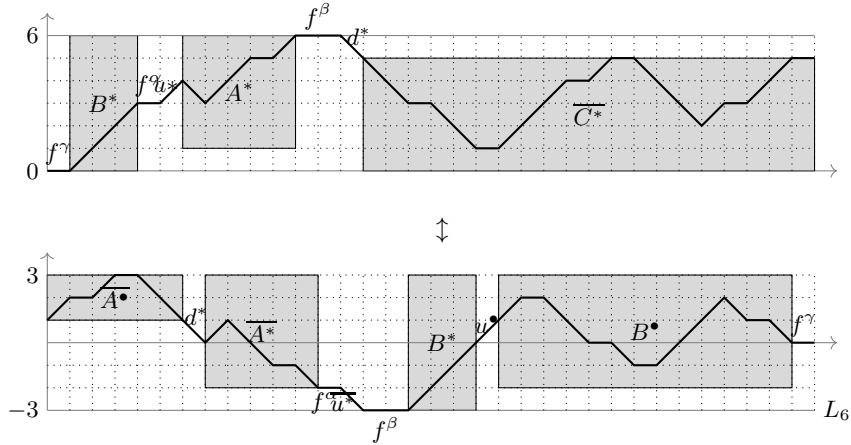

\subsection{A property of \(\phi_{m,k}\)}\label{sec:special}

For a free Motzkin path \(P\),
a maximal subpath in \(P\) with no down (resp. up) step is called an \emph{upward} (resp. \emph{downward}) \emph{run} if it contains at least one up (resp. down) step. Let \(\run(P)\) denote the total number of runs in \(P\). If \(P\) has no flat step, then the total number of peaks and valleys of $P$ is counted by \(\run(P)-1\). 
For example, the path \(P=uufufdddufuuf\) has two upward runs, \(uufuf\) and \(ufuuf\), and one downward run \(fddd\) so that \(\run(P)=3\). Note that \(\run(P)=0\) if and only if \(P\) is empty or a path consisting of flat steps only, and \(\run(P)=\run(\overline{P})\).
For a path $P \in \oF(m,r,k)$, the following proposition shows that $\run(P)$ and $\run(\phi_{m,k}(P))$ are differ by at most $1$. 

\begin{prop}\label{prop:run}
For positive integers \(m\) and \(k\), let \(P\in \oF(m,r,k)\) be given.

\begin{enumerate}
    \item[(a)] If \(P\) starts with an upward run, then 
    \[
    \run(\phi_{m,k}(P))=\run(P)-\{1-(-1)^m\}/2.
    \]
    \item[(b)] If \(P\) starts with a downward run, then 
    \[
    \run(\phi_{m,k}(P))=\run(P)-\{1+(-1)^m\}/2.
    \]
\end{enumerate}
\end{prop}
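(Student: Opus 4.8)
The plan is to induct on $k$, mirroring the recursive definition of $\phi_{m,k}$ and the well-definedness lemma. The basic tool is a reformulation of $\run$ in terms of the word $w(P)$ obtained from $P$ by deleting every flat step: then $\run(P)$ is exactly the number of maximal constant blocks of $w(P)$, so that for a concatenation one has the merge identity
\[
\run(XY)=\run(X)+\run(Y)-\varepsilon(X,Y),
\]
where $\varepsilon(X,Y)=1$ if both $w(X)$ and $w(Y)$ are nonempty and the last letter of $w(X)$ equals the first letter of $w(Y)$, and $\varepsilon(X,Y)=0$ otherwise. I would also record that $\run(\overline{P})=\run(\overleftarrow{P})=\run(P)$ (barring and reversing preserve maximal blocks), so that every boundary letter I need can be read off from the last letter of $w$ on the left piece and the first letter of $w$ on the right piece. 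Finally I would reinterpret the target: since $\{1-(-1)^m\}/2$ is $1$ for odd $m$ and $0$ for even $m$, and $\{1+(-1)^m\}/2$ is the opposite, the claim is equivalent to saying that $\run$ drops by exactly $1$ precisely when the first non-flat step of $P$ is $u$ with $m$ odd or $d$ with $m$ even, and is preserved otherwise.

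The second preliminary is the parity--height dictionary: a path in $\oF(m,r,k)$ starts at height $0$ when $m$ is even and at height $1$ when $m$ is odd, because its $m$ non-flat steps have net displacement equal to minus the starting height. Under this dictionary the ``drop by $1$'' cases become ``starts at height $0$ with a down step'' or ``starts at height $1$ with an up step,'' that is, exactly the cases in which the first non-flat step leaves the strip $0\le y\le 1$. In the base case $k\in\{0,1\}$ we have $\phi_{m,k}(P)=\overleftarrow{P}$, so $\run$ is preserved; and since every path of $\oF(m,r,1)$ lives in $0\le y\le 1$, its first non-flat step cannot leave that strip, so the correction term vanishes and the formula holds. (For $k=0$ there are no runs and both statements are vacuous.)

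For the inductive step I would treat each case of the definition in turn, using the geometric description of the pieces from the well-definedness proof: the special step satisfies $s=u$ for odd $k$ and $s=d$ for even $k$; $A$ (resp. $A_2$) ends with the non-flat step forced by arriving just below $L_k$, $B$ or $B_1$ begins with the step leaving $L_k$, and the break step $b$ and the letter $a$ have prescribed directions. Concretely, in Case 1--i) with $k$ odd one has $s=u$, $\mathrm{last}(A)=u$, and $\mathrm{first}(B)=\mathrm{last}(B)=d$, so the merge identity gives $\run(P)=\run(A)+\run(B)$ or $\run(B)+1$ according as $A$ is nonempty or empty, while $\run(Q)=\run(\overline{B})+1+\run(A)-\varepsilon(\overline{B},u)-\varepsilon(u,A)$ evaluates to $\run(P)-1$ when $\mathrm{first}(A)=u$ (case (a)) and to $\run(P)$ when $\mathrm{first}(A)=d$ (case (b)); this already realizes the predicted correction for odd $m$. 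The even-$k$ subcases are identical after applying $\run(\overline{Q})=\run(Q)$. For the cases containing a break step (Cases 1--ii, 2--i, and 2--ii), $\phi_{m,k}(P)=Q\overline{C}$ or $\overline{Q}\,\overline{C}$ with $C=\phi_{m',k'}(\,\cdot\,)$ and $k'<k$; here I would compute $\run(Q)$ as above, apply the induction hypothesis to the explicit input of $\phi_{m',k'}$ (reading off whether that input begins with an upward or downward run, and using that $m'$ is odd in Case 1 and even in Case 2) to evaluate $\run(C)=\run(\overline{C})$, and finally add the single junction term $\varepsilon(Q,\overline{C})$.

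The main obstacle is exactly this last bookkeeping in the recursive cases: one must pin down the first and last non-flat steps at every seam --- around $s$, around $b$, at the $x$-axis where $B_2$ (or $A_1$) attaches, and at the $Q$--$\overline{C}$ junction --- for all four parity combinations, and verify that the correction supplied by the induction hypothesis for the smaller path cancels against the junction indicator, leaving only the correction determined by $\mathrm{first}(P)$ and the parity of $m$. I expect no genuine difficulty beyond care, since that correction depends only on the very first step of $P$ while the recursion only ever reshuffles interior pieces; but making the cancellation explicit in every subcase, including the degenerate ones where $A$, $A_2$, or the relevant $B$-piece is empty or purely flat, is the bulk of the work.
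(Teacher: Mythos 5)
Your proposal is correct and takes essentially the same route as the paper: induction on $k$ following the case structure of $\phi_{m,k}$, reduction to flat-free words, and seam-by-seam run counting of the concatenated pieces with the induction hypothesis applied to the recursive piece $C$. Your explicit merge identity and the ``first non-flat step leaves the strip $0\le y\le 1$'' reformulation are just packagings of bookkeeping the paper does implicitly (e.g.\ its computations $\run(P)=\run(A)+\run(B)$ and $\run(S)=r+\run(C)-1$), so the two arguments coincide in substance.
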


\begin{proof}
As erasing any number of flat steps do not change the number of runs, it suffices to show that this proposition holds when \(r=0\). We prove it by using induction on \(k\). 

For the initial step with \(k=1\), we consider Case 0. Recall that $\phi_{m,1}(P)=\overleftarrow{P}$. If \(P\) starts with an up step, \(m\) must be even and \(\run(\overleftarrow{P})=\run(P)\). When $P$ starts with a down step, $m$ is odd and \(\run(\overleftarrow{P})=\run(P)\).

Now we assume $k>1$ and suppose that this proposition holds for any \(P^*\in \oF(m',0,k')\) with \(k'<k\). Here we give a detailed proof for (a) and the proof for (b) comes out similarly.

Suppose that \(P\in\oF(m,0,k)\) starts with an up step and let \(S=\phi_{m,k}(P)\). We need to show that \(\run(S)\) is given by \(\run(P)-1\) (resp. \(\run(P)\)) if \(m\) is odd (resp. even).

In Case 1--i), we write \(P=AuB\) (resp. $P=AdB$) and \(S=\overline{B}uA\) (resp. \(S=Bu\overline{A}\)), where \(\overline{B}\) (resp. $B$) ends with an up step if \(m\) is odd (resp. even). If \(A\) is empty, then \(m\) must be odd so that \(\run(S)=\run(B)=\run(P)-1\) as we desire. Now assume that \(A\) starts with an up step. In this case,
\(\run(P)=\run(A)+\run(B)\) and 
\[
\run(S)=\begin{cases}
\run(A)+\run(B)-1 & \mbox{if \(m\) is odd},\\
\run(A)+\run(B) & \mbox{if \(m\) is even},
\end{cases}
\]
so we are done.

In Case 1--ii), we write \(P=AuB_1dB_2\) (resp. $P=AdB_1 u B_2$) and \(\overline{B_2}\in\oF(m',k')\) (resp. \(B_2\in\oF(m',k')\)) for some $k'<k$ and odd \(m'\), where $m$ is odd (resp. even). Let \(r:=\run(A)+\run(B_1)\). Note that if \(m\) is odd (resp. even), then
\[
\run(P)=\begin{cases}
r+\run(B_2) & \mbox{if \(B_2\) starts with an up (resp. down) step},\\
r+\run(B_2)-1 & \mbox{if \(B_2\) starts with a down (resp. up) step}.
\end{cases}
\]
In this case, if \(m\) is odd (resp. even), then \(S=\overline{B_1}uAd\overline{C}\) (resp. \(S=B_1u\overline{A}d\overline{C}\)), where \(C=\phi_{m',k'}(\overline{B_2})\) (resp. \(C=\phi_{m',k'}(B_2)\)). By the induction hypothesis, if \(m\) is odd (resp. even), then 
\[
\run(C)=\begin{cases}
\run(B_2) & \mbox{if \(B_2\) starts with an up (resp. down) step},\\
\run(B_2)-1 & \mbox{if \(B_2\) starts with a down (resp. up) step}.\\
\end{cases}
\]
Hence, if \(m\) is odd, then
\(\run(S)=r+\run(C)-1\) so that
\[
\run(S)=\begin{cases}
r+\run(B_2)-1 & \mbox{if \(B_2\) starts with an up step},\\
r+\run(B_2)-2 & \mbox{if \(B_2\) starts with a down step}, 
\end{cases}
\]
which means that \(\run(S)=\run(P)-1\). Similarly, we show that  \(\run(S)=\run(P)\) for even $m$.

The proofs of Case 2--i)  and Case 2--ii) are similar, so we only prove Case 2--ii). We divide this case into two cases depending on the parity of $m$.

When $m$ is odd, we write $P=A_1 dA_2 d B_1 u B_2$ and $S=B_1u\overline{A_2} d \overline{C}$, where $A_1$ starts with an up step and $C=\phi_{m',k'}(\overline{A_1}uB_2)$ for some $k'<k$ and even $m'$. Note that $\run(P)=\run(A_1)+\run(dA_2 d B_1)+\run(uB_2)-2$, $\run(S)=\run(B_1u\overline{A_2} d)+\run(C)-1$, and $\run(C)=\run(\overline{A_1}uB_2)-1$ by the induction hypothesis. We have
\[
\run(P)=\begin{cases}
\run(A_1)+\run(dA_2 d B_1)+\run(B_2)-1 & \mbox{if \(B_2\) starts with a down step},\\
\run(A_1)+\run(dA_2 d B_1)+\run(B_2)-2 & \mbox{if \(B_2\) starts with an up step},
\end{cases}
\]
and
\[
\run(S)=\begin{cases}
\run(B_1 u \overline{A_2} d)+\run(A_1)+\run(B_2)-2 & \mbox{if \(B_2\) starts with a down step},\\
\run(B_1 u \overline{A_2} d)+\run(A_1)+\run(B_2)-3 & \mbox{if \(B_2\) starts with an up step}.
\end{cases}
\]
Since $\run(dA_2 d B_1)=\run(B_1 u \overline{A_2} d)$ whenever $A_2$ starts with $u$ or $d$, we get \(\run(S)=\run(P)-1\).

For even $m$, we write $P=uA_2 uB_1 d B_2$ and $S=\overline{B_1}uA_2 d \overline{C}$, where $C=\phi_{m',k'}(u\overline{B_2})$ for some $k'<k$ and even $m'$. We have $A_1=\emptyset$ because $P$ starts with $u$. We also get \(\run(S)=\run(P)\) in a similar manner.
\end{proof}

\begin{rem}\label{rem:run}
Proposition~\ref{prop:run} confirms that $|\run(P)-\run(\phi_{m,k}(P))|\le 1$, which shows that the map \(\phi_{m,k}\) is structurally distinguishable from the map from Gu and Prodinger \cite{GP}. For example, Gu and Prodinger's map sends 
\[
P=dduuuuudddddduuu \mapsto S=uuuuuuddduuuddud,
\] 
which shows that their map has paths satisfying $|\run(P)-\run(S)|=2$ (one can obtain this example by putting \(A=dddddduuu\), \(B=uu\), \(C=du\), and \(D=\emptyset\) in Figure 2.5 in \cite{GP}). 
\end{rem}

Let \(\oF(m,r,k;i)\) denote the set of paths in $\oF(m,r,k)$ with $\lceil i/2 \rceil$ downward (resp. upward) runs for odd (resp. even) \(m\), and let \(\oM(m,r,k;i)\) denote the set of paths in $\oM(m,r,k)$ with $i$ runs.
By Proposition \ref{prop:run}, it is straightforward to get the following corollary.

\begin{cor}\label{cor:run}
For non-negative integers $i$ and $m$ of the same (resp. different) parity, the map \(\phi_{m,k}\) induces a bijection between the set \(\oM(m,r,k;i)\) and the set of paths in $\oF(m,r,k;i)$ that end with a downward (resp. upward) run.
\end{cor}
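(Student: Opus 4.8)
The plan is to deduce the corollary directly from Proposition~\ref{prop:run} together with elementary bookkeeping on how runs alternate along a free Motzkin path. By Theorem~\ref{thm:main}, $\phi_{m,k}$ is a bijection from $\oF(m,r,k)$ onto $\oM(m,r,k)$ for each fixed $r$, and since $\run$ is unaffected by flat steps (they are absorbed into runs), it suffices to obtain a \emph{pointwise} characterization: for a single $P\in\oF(m,r,k)$ with image $S:=\phi_{m,k}(P)$, determine exactly when $\run(S)=i$ and read off from that both the ending behavior of $P$ and its number of upward/downward runs. Once this is in hand, the asserted bijection is simply the restriction of the global bijection $\phi_{m,k}$ to the fibers of the statistic $\run$, so surjectivity and injectivity come for free.

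First I would record two structural facts. Writing $u_P$ and $d_P$ for the numbers of upward and downward runs of $P$, the runs alternate, so $\run(P)=u_P+d_P$, and the ordered pair (type of the first run, type of the last run) pins down the relation between $u_P$ and $d_P$: if $P$ begins and ends with the same type then the two counts differ by one and $\run(P)$ is odd, while if they differ then $u_P=d_P$ and $\run(P)$ is even. Second, I would note that the boundary cases $m=0$ or $k=0$ are degenerate (the relevant sets consist of flat paths with $\run=0$), so we may assume $m,k\ge 1$ and apply Proposition~\ref{prop:run}.

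The heart of the argument is a finite case analysis over the parity of $m$, the starting run type, and the ending run type of $P$. In each case I would substitute the value supplied by Proposition~\ref{prop:run}, namely $\run(S)=\run(P)-\{1-(-1)^m\}/2$ or $\run(S)=\run(P)-\{1+(-1)^m\}/2$ according as $P$ starts with an upward or a downward run, and then use the alternation identities above to express $i=\run(S)$ in terms of $u_P$ or $d_P$. The outcome to be verified is uniform: whenever $P$ ends with a downward run one finds $i\equiv m\pmod 2$, while whenever $P$ ends with an upward run one finds $i\not\equiv m\pmod 2$; and in every case the number of downward (resp.\ upward) runs of $P$, for odd (resp.\ even) $m$, equals $\lceil i/2\rceil$. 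The latter is precisely the condition defining membership of $P$ in $\oF(m,r,k;i)$, so $S\in\oM(m,r,k;i)$ holds if and only if $P$ lies in the stated subset of $\oF(m,r,k;i)$ with the prescribed ending, which yields the corollary.

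I expect the only real care needed to be the parity and ceiling arithmetic: one must confirm that both possible starting directions produce the same $i$ and the same run count within each fixed ending/parity class, and that $\lceil i/2\rceil$ correctly interpolates between the odd-$i$ case, where one run type exceeds the other by one, and the even-$i$ case, where the two counts agree. The flat steps need no separate treatment, since $\phi_{m,k}$ preserves $r$ and runs absorb leading, interior, and trailing flats without changing their count, so Proposition~\ref{prop:run} applies verbatim for every $r$.
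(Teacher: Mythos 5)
Your proposal is correct and takes essentially the same approach as the paper: the paper offers no written proof beyond the remark that the corollary is ``straightforward'' from Proposition~\ref{prop:run}, and your case analysis over the parity of $m$ and the first/last run types, combined with the alternation identity $\run(P)=u_P+d_P$ with $|u_P-d_P|\le 1$, is precisely the bookkeeping being left implicit there. Your parity and ceiling computations check out in all eight cases, so restricting the bijection of Theorem~\ref{thm:main} to the fibers of $\run$ yields the corollary as you claim.
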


\begin{rem}
It is clear that the set of paths in $\oM(m,0,k;i)$ is in bijection with the set of symmetric Dyck paths of length \(2m\) with \(i\) peaks which touch the line \(y=k\) and is contained in the strip \(0 \le y \le k\). By Corollary \ref{cor:run}, the set of symmetric Dyck paths of length \(2m\) with \(i\) peaks corresponds to the set of paths in \(\cup_{k}\oF(m,0,k;i)\) that end with a downward (resp. upward) run whenever \(i\) and \(m\) have the same (resp. different) parity. This one-to-one correspondence gives a combinatorial proof of the well-known fact that the number of symmetric Dyck paths of length \(2m\) with \(i\) peaks is given by
\begin{equation}\label{eq:symmetric_peak}
\binom{\lf \frac{m-1}{2} \rf}{\lf \frac{i-1}{2} \rf}\binom{\lf \frac{m}{2} \rf}{\lf \frac{i}{2}\rf}.
\end{equation}
\end{rem}


\section{Cornerless free Motzkin paths}

In this section, we combinatorially interpret cornerless Motzkin paths as a \(t\)-core partitions. First let us consider the restriction of the map \(\phi_{m,k}\).


\subsection{Restriction to cornerless free Motkzin paths}\label{sec:cornerless}

Recall that \(\oFc(m,r,k)\) is the set of cornerless free Motzkin paths in \(\oF(m,r,k)\) that never start with a down (resp. up) step for odd (resp. even) \(m\) and \(\oMc(m,r,k)\) is the set of cornerless Motzkin prefixes in \(\oM(m,r,k)\) that end with a flat step. Now we show that the map \(\phi_{m,k}\), defined in Section~\ref{sec:map_phi}, gives a one-to-one correspondence between these sets.

\begin{prop} \label{thm:cornerless map}
For given non-negative integers \(m,r\), and \(k\), $\phi_{m,k}$ induces a bijection between the sets \(\oFc(m,r,k)\) and \(\oMc(m,r,k)\).
\end{prop}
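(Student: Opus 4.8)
The plan is to deduce the statement from the fact, already established, that $\phi_{m,k}$ is a bijection from $\oF(m,r,k)$ to $\oM(m,r,k)$ with inverse $\psi_{m,k}$ (Lemma~\ref{lem:inverse}). Given this, it suffices to prove the two inclusions $\phi_{m,k}(\oFc(m,r,k))\subseteq\oMc(m,r,k)$ and $\psi_{m,k}(\oMc(m,r,k))\subseteq\oFc(m,r,k)$: since $\psi_{m,k}=\phi_{m,k}^{-1}$, these together force $\phi_{m,k}(\oFc(m,r,k))=\oMc(m,r,k)$, and injectivity of $\phi_{m,k}$ then yields the desired bijection. I would prove both inclusions simultaneously by induction on $k$, following exactly the recursive case analysis used to define $\phi_{m,k}$ and $\psi_{m,k}$ in Sections~\ref{sec:map_phi} and~\ref{sec:1-1}.

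Before the induction I would record a few elementary but load-bearing observations. First, both operations \(\overline{(\cdot)}\) and \(\overleftarrow{(\cdot)}\) preserve cornerlessness (barring sends peaks to peaks and valleys to valleys, and reversal does likewise), so the only places a corner can be created in the image are the \emph{seams} where $\phi_{m,k}$ glues together rearranged blocks. Second, a concatenation $XY$ of two cornerless paths is cornerless if and only if the pair consisting of the last step of $X$ and the first step of $Y$ is neither \(ud\) nor \(du\). The decisive structural fact is this: if $P$ is cornerless, then at every change of direction between monotone runs there must sit at least one flat step; hence in the decomposition \eqref{eq:basic} (and its refinements $B=B_1f^{\delta}bB_2$ and $A=A_1aA_2$) the flat blocks adjacent to the special step $s$, the break step $b$, and the step $a$ are \emph{forced to be nonempty} whenever a neighboring step has the opposite direction, while the subpaths $A,B,B_1,B_2$ are themselves cornerless. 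I would isolate this as a short preliminary lemma.

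For the base case $k\in\{0,1\}$ we have $\phi_{m,k}(P)=\overleftarrow{P}$. The starting condition defining $\oFc$ (never start with $d$ for odd $m$, never with $u$ for even $m$), combined with the strip bound $0\le y\le 1$, forces $P$ to begin with a flat step; consequently $\overleftarrow{P}$ ends with a flat step, and cornerlessness is preserved, so $\phi_{m,k}(P)\in\oMc(m,r,k)$ (cf.\ Figure~\ref{fig:case0}). The reverse inclusion for $\psi_{m,k}$ is the mirror image. For the inductive step I would assume the result for all $k'<k$ and treat Cases~1--i), 1--ii), 2--i), 2--ii) and each parity in turn, reading off $\phi_{m,k}(P)=Q$ or $Q\overline{C}$ from \eqref{eq:case1-1}--\eqref{eq:case2-2}. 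By the preliminary lemma each building block is cornerless and the relevant flat blocks are nonempty; the recursive piece $C=\phi_{m',k'}(\cdots)$ has a cornerless argument lying in $\oFc(m',r',k')$ (this requires checking that the argument begins with a flat step of the type prescribed by the parity of $m'$), so by the induction hypothesis $C\in\oMc(m',r',k')$, whence $C$ and $\overline{C}$ are cornerless and end with a flat step. It then remains to verify at each seam of $Q$, and at the junction between $Q$ and $\overline{C}$, that no peak or valley appears and that the whole path terminates in a flat step; running $\psi_{m,k}$ through Cases~I, II--i), II--ii) gives the opposite inclusion.

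The main obstacle is precisely this seam bookkeeping. Because $\phi_{m,k}$ reverses, complements, and reinterleaves the pieces $f^{\gamma},\overline{B},f^{\alpha},s,A,f^{\beta},b,\overline{C}$ (and their parity-twisted analogues), the candidate corners live only at the freshly created boundaries, and one must confirm case by case that every such boundary either carries one of the nonempty flat blocks forced by the cornerlessness of $P$, or else joins two steps of the same direction. The second delicate point is transporting the \emph{asymmetric} boundary data correctly: the condition ``$P$ starts with a flat of the correct type'' on the $\oFc$ side must be shown to correspond to ``the image ends with a flat'' on the $\oMc$ side, and since the parity of $m'$ flips between Case~1 and Case~2, this starting-step condition has to be re-examined at each level of the recursion. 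Corollary~\ref{cor:run} can serve as a useful consistency check on the number of runs, but it cannot replace the seam analysis, because cornerlessness is strictly stronger than any constraint on $\run$.
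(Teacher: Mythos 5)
Your proposal is correct and is essentially the paper's own argument: induction on \(k\) through the cases defining \(\phi_{m,k}\), using that barring and reversal preserve cornerlessness, that cornerlessness of \(P\) forces the flat blocks at the rearranged seams (e.g.\ \(\beta>0\)), and that the forbidden-starting-step condition on the \(\oFc\) side corresponds to the end-with-a-flat-step condition on the \(\oMc\) side. The paper is slightly more economical---in each case it shows that \(P\in\oFc(m,r,k)\) and \(\phi_{m,k}(P)\in\oMc(m,r,k)\) are characterized by one and the same list of conditions on the pieces of the decomposition, so both inclusions come from a single pass and your separate run through the cases of \(\psi_{m,k}\) is unnecessary; also, one small slip: the recursive argument (e.g.\ \(\overline{B_2}f^{\delta}\)) need not begin with a flat step, it only needs to avoid the forbidden direction prescribed by the parity of \(m'\), which is exactly membership in \(\oFc(m',r',k')\).
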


\begin{proof}
Let $P \in \oF(m,r,k)$ and $\phi_{m,k}(P) \in \oM(m,r,k)$.
For $k=0$ or $1$, $P \in \oFc(m,r,k)$ when it is cornerless and starts with a flat step, and $\overleftarrow{P} \in \oMc(m,r,k)$ when it is cornerless and ends with a flat step. Hence, $P \in \oFc(m,r,k)$ if and only if $\phi_{m,k}(P)=\overleftarrow{P}\in \oMc(m,r,k)$.

Let $m$ and $k$ have the same parity and there is no break step in $P$ with $k>1$.
It follows from \eqref{eq:basic} and \eqref{eq:case1-1} that $P \in \oFc(m,r,k)$ and $\phi_{m,k}(P) \in \oMc(m,r,k)$ have the same restriction such that $A$ and $B$ are cornerless, $A$ does not start with a down (resp. up) step for odd (resp. even) $m$, and $\beta>0$. Hence, $P \in \oFc(m,r,k)$ if and only if $\phi_{m,k}(P)\in \oMc(m,r,k)$.

For the remaining cases, we assume that $\phi_{m',k'}$  induces a bijection between $\oFc(m',r',k')$ and $\oMc(m',r',k')$ for $k'<k$. We consider the case when $m$ and $k$ have the same parity and there is a break step $b$ in $P$. By \eqref{eq:case1-2P} and \eqref{eq:case1-2}, $P \in \oFc(m,r,k)$ and $\phi_{m,k}(P)\in \oMc(m,r,k)$ have the same condition such that $A$ and $B_1$ are cornerless, $A$ does not start with a down (resp. up) step, $\overline{B_2}f^{\delta}$ (resp. $B_2f^{\delta}$) $\in \oFc(m',r',k')$ for some $k'<k$ when $m$ is odd (resp. even), and $\beta>0$. Hence, $P \in \oFc(m,r,k)$ if and only if $\phi_{m,k}(P)\in \oMc(m,r,k)$.

Similarly, we can show that $P \in \oFc(m,r,k)$ if and only if $\phi_{m,k}(P)\in \oMc(m,r,k)$ when $m$ and $k$ have different parity by considering \eqref{eq:case2-1P}, \eqref{eq:case2-1}, \eqref{eq:case2-2P}, and \eqref{eq:case2-2}.
\end{proof}

\subsection{Cornerless Motkzin paths and \(t\)-cores}\label{sec:tcore}

A \emph{partition} \(\la=(\la_1,\la_2,\dots,\la_{\ell})\) is a non-increasing positive integer sequence. The \emph{Young diagram} of \(\la\) is an array of boxes arranged in left-justified rows with \(\la_{i}\) boxes in the \(i\)th row. An \emph{inner corner} of a Young diagram is a box that can be removed from the Young diagram and the rest of the Young diagram is still the Young diagram of a partition. We say that \(\la\) has \(m\) corners if its Young diagram has \(m\) inner corners.
For a given Young diagram, the \emph{hook length} of a box at the position \((i,j)\), denoted by \(h(i,j)\), is the number of boxes on the right, in the below, and itself. For a partition \(\la\), the \emph{beta-set} of \(\la\), denoted by \(\beta(\la)\), is the set of hook lengths of boxes in the first column of the Young diagram of \(\la\). A partition is called a \emph{\(t\)-core} if its Young diagram has no box of hook length \(t\). 
We mainly consider \(t\)-core partitions with \(m\) corners and use the abacus diagram introduced by James and Kerber \cite{JK} to count them.
The \emph{\(t\)-abacus diagram} is a diagram to be the bottom and left-justified diagram with infinitely many rows labeled by \(i\in \mathbb{N} \cup \{0\} \) and \(t\) columns labeled by \(j=0,1,\dots,t-1\) whose position \((i,j)\) is labeled by \(ti+j\). The \emph{\(t\)-abacus} of a partition \(\la\) is obtained from the \(t\)-abacus diagram by placing a bead on each position labeled by \(h\), where \(h\in\beta(\la)\). A position without bead is called a \emph{spacer}. The following lemma is useful to determine whether a given partition is \(t\)-core or not. 

\begin{lem}\cite[Lemma 2.7.13]{JK}\label{lem:core} 
A partition \(\la\) is a \(t\)-core if and only if \(h\in\beta(\la)\) implies \(h-t\in\beta(\la)\) whenever \(h>t\).
Equivalently, \(\la\) is a \(t\)-core if and only if the \(t\)-abacus of \(\la\) has no spacer below a bead in any column. 
\end{lem}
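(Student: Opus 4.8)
The plan is to convert the statement ``$\la$ has a cell of hook length $t$'' into an arithmetic condition on the bead positions $\beta(\la)$, and then read off the two equivalent forms directly. The one fact I would establish first is the classical description of all hook lengths of $\la$ in terms of its beta-set: writing the spacers as the non-beads $\mathbb{Z}_{\ge 0}\setminus\beta(\la)$, the multiset of hook lengths of the cells of $\la$ is
\[
\{\, a-c \ :\ a\in\beta(\la),\ c\in\mathbb{Z}_{\ge 0}\setminus\beta(\la),\ 0\le c<a \,\}.
\]
Concretely, the cells in the row of $\la$ whose first-column hook length is $a$ have hook lengths exactly $a-c$ as $c$ ranges over the non-beads lying below $a$. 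I would prove this via the boundary lattice path (Maya diagram) of $\la$: reading the unit boundary steps from the lower-left upward, a step is vertical precisely at the positions belonging to $\beta(\la)$ and horizontal at the spacers. Under this reading each cell of $\la$ corresponds to a unique pair formed by a vertical step at some $a\in\beta(\la)$ and an earlier horizontal step at some spacer $c<a$ (the arm of the cell reaching the horizontal step and the leg reaching the vertical step), and its hook length equals the gap $a-c$. Equivalently, one may argue through rim hooks: a cell of hook length $t$ is exactly the datum of a removable border strip of size $t$, which on the abacus is the move that slides a bead from a position $a$ down into the empty slot $a-t$.

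Granting this description, the core equivalence is immediate. A value $t$ is a hook length of $\la$ if and only if one may take $c=a-t$ above, that is, if and only if there exists $a\in\beta(\la)$ with $a\ge t$ and $a-t\notin\beta(\la)$. Negating, $\la$ is a $t$-core if and only if
\[
a\in\beta(\la)\ \text{and}\ a\ge t\ \Longrightarrow\ a-t\in\beta(\la).
\]
This is exactly the first asserted condition: for a bead $a=h>t$ it says $h-t\in\beta(\la)$, while the boundary instance $a=t$ (where $a-t=0$ is never a bead in the first-column-hook convention) simply records that $\la$ has no first-column cell of hook length $t$. For the abacus form, note that $a$ and $a-t$ occupy the same column $j\equiv a \pmod t$ of the $t$-abacus, with $a-t$ lying one row below $a$; thus the displayed implication says precisely that no bead has a spacer immediately below it in its column, and iterating down each column this is equivalent to no bead having any spacer anywhere below it. Hence both reformulations in the statement follow.

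The only real content, and the step I expect to require the most care, is the hook-length description in the first paragraph: everything afterward is a translation between ``gap $a-c$'', ``hook length'', and ``the position $a-t$ in the column of $a$''. I would set up the boundary-path bijection explicitly, verify that it sends $\beta(\la)$ to the vertical steps, and check that the arm-plus-leg count of a cell equals the corresponding step gap; a short induction on $|\la|$ (or on the number of removable border strips of size $t$) then transfers the abacus move ``slide a bead into the empty slot $t$ below it'' to ``remove a border strip of size $t$'', closing the rim-hook viewpoint. Since this is precisely \cite[Lemma~2.7.13]{JK}, I would either cite it or include this short self-contained argument.
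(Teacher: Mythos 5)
Your argument is correct, but note that the paper itself contains no proof of this lemma: it is quoted directly from James and Kerber, and the citation \cite[Lemma 2.7.13]{JK} is the paper's entire justification. Your write-up therefore supplies what the paper delegates to the reference, and it does so along the standard lines: establish the bead--spacer description of the hook-length multiset, \(\{a-c : a\in\beta(\la),\ c\notin\beta(\la),\ 0\le c<a\}\), observe that a cell of hook length \(t\) is precisely a bead \(a\) for which \(a-t\ge 0\) is a spacer, negate to get the \(t\)-core condition, and translate into the columns of the \(t\)-abacus, where \(a\) and \(a-t\) sit one row apart in the same column. The only substantive ingredient is the first step, which you rightly flag and would prove via the boundary-path correspondence or the rim-hook/bead-sliding argument; everything after it is bookkeeping. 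What your route buys is self-containedness; what the citation buys is brevity, which is a reasonable choice for a classical fact used as a black box.

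One detail in your proposal deserves emphasis because it is sharper than the paper's own wording. Your derivation yields the implication for all beads \(a\) with \(a\ge t\), and your remark on the boundary case \(a=t\) is genuinely needed: as literally stated, the condition ``\(h\in\beta(\la)\) and \(h>t\) imply \(h-t\in\beta(\la)\)'' is vacuously true for \(\la=(2)\) and \(t=2\) (here \(\beta(\la)=\{2\}\) and no element exceeds \(t\)), although \((2)\) is not a \(2\)-core. The abacus formulation detects this, since the bead at position \(t\) has the spacer at position \(0\) below it, and so does your version, since \(a=t\) would require \(0\in\beta(\la)\), which is impossible under the first-column-hook convention. So the two formulations in the lemma agree only after the first is read with \(h\ge t\) (equivalently, with the extra requirement \(t\notin\beta(\la)\)), exactly as your proof produces it.
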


From the above lemma, we easily obtain a simple bijection between the set of \(t\)-core partitions and the set of non-negative integer sequences \((n_0,n_1,\dots, n_{t-1})\), where \(n_0=0\) and \(n_j\) is the number of beads in column \(j\) for \(j=1,\dots,t-1\). Using the bijection between the bar graphs and cornerless Motzkin paths, introduced by Deutsch and Elizalde \cite{DE}, we give a path interpretation of the \(t\)-core partitions restricted by the number of corners and the first hook length \(h(1,1)\).

\begin{thm}\label{thm:corner} 
For non-negative integers \(t\), \(m\), and \(k\), there is a bijection between any pair of the following sets.
\begin{enumerate}
\item[(a)] The set of \(t\)-core partitions with \(m\) corners such that \(h(1,1)<kt \).
\item[(b)] The set of non-negative integer sequences \((n_0,n_1,\dots, n_{t-1})\) satisfying that \(n_0=0\), \(n_i\leq k\) for all \(i\), and 
	\[
	\sum_{i=1}^{t} |n_i-n_{i-1}| =2m,
	\]
where we set \(n_t:=0\).
\item[(c)] The set of cornerless Motzkin paths of length \(2m+t-1\) with \(t-1\) flat steps that are contained in the strip \(0\leq y \leq k\).
\end{enumerate}
\end{thm}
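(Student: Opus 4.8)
The plan is to prove the two bijections (a)$\leftrightarrow$(b) and (b)$\leftrightarrow$(c) separately, so that composing them produces a bijection between any pair of the three sets. For (a)$\leftrightarrow$(b) I would start from the abacus correspondence already recorded above, which sends a $t$-core $\la$ to its column-count vector $(n_0,\dots,n_{t-1})$ with $n_0=0$, where column $j$ carries beads exactly at the positions $j,\,t+j,\,\dots,\,(n_j-1)t+j$ (bottom-packed by Lemma~\ref{lem:core}). It then remains to match the two defining constraints. The height constraint is immediate: $h(1,1)$ is the largest first-column hook length, hence the largest element of $\beta(\la)$, hence the largest bead position; and this largest bead position is $<kt$ precisely when every $n_j\le k$, since $n_j\le k$ forces each bead position to be at most $(k-1)t+(t-1)=kt-1$, whereas $n_j\ge k+1$ puts a bead at position $\ge kt$.

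The corner constraint is the heart of the matter. First, purely in beta-set terms, I would show that the number of inner corners of $\la$ equals the number of maximal blocks of consecutive beads when the positions $0,1,2,\dots$ are read in increasing order. Writing $\beta(\la)=\{b_1>\dots>b_\ell\}$ and $\la_i=b_i-(\ell-i)$, an inner corner in row $i<\ell$ occurs iff $b_i\ge b_{i+1}+2$, i.e.\ iff a gap separates two consecutive beads, and these contribute (number of blocks)$-1$; the row-$\ell$ corner is always present because $n_0=0$ makes position $0$ a gap, so $b_\ell\ge 1>0$. Summing gives exactly the number of blocks. Second, because $n_0=0$ the whole column $0$ (positions $0,t,2t,\dots$) is bead-free, so no block crosses a row boundary of the abacus; reading the abacus by rows, a block starting in column $j\ge 1$ at level $i$ corresponds to $n_{j-1}\le i<n_j$, and summing over $i$ yields that the number of corners equals $\sum_{j=1}^{t-1}(n_j-n_{j-1})^{+}$. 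Finally, setting $n_t:=0$ and using $\sum_{j=1}^{t}(n_j-n_{j-1})=n_t-n_0=0$, the positive and negative parts balance, so $\sum_{j=1}^{t}(n_j-n_{j-1})^{+}=\tfrac12\sum_{j=1}^{t}|n_j-n_{j-1}|$; since the extra term $(n_t-n_{t-1})^{+}=0$, the number of corners is $\tfrac12\sum_{j=1}^{t}|n_j-n_{j-1}|$, which equals $m$ exactly when $\sum_{j=1}^{t}|n_j-n_{j-1}|=2m$.

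For (b)$\leftrightarrow$(c) I would use the height-reading map underlying the Deutsch--Elizalde bar-graph/Motzkin correspondence: send $(n_0,\dots,n_{t-1})$ to the path whose $r$-th flat step has height $n_r$ for $r=1,\dots,t-1$, joining consecutive flat-step heights, and the two endpoints at height $0=n_0=n_t$, by monotone runs of up or down steps. This path has $t-1$ flat steps and $\sum_{j=1}^{t}|n_j-n_{j-1}|=2m$ non-flat steps, hence length $2m+t-1$; it lies in the strip $0\le y\le k$ exactly when every $n_j\le k$; and it is cornerless because no \(ud\) or \(du\) can occur inside a monotone run while every genuine change of direction is forced to be separated by a flat step. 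Conversely, any cornerless Motzkin path with $t-1$ flat steps is determined by the heights of its flat steps, since cornerlessness forces each maximal flat-free stretch to be monotone; recording these heights recovers $(n_1,\dots,n_{t-1})$ and inverts the map (the empty partition, with all $n_j=0$ and the all-flat path $f^{t-1}$, is the consistent $m=0$ case).

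The main obstacle is the corner count in (a)$\leftrightarrow$(b): translating the geometric notion of a removable inner corner first into the block structure of the one-line abacus reading, and then into the total variation of the column-height vector. The normalization $n_0=0$, together with the boundary value $n_t=0$, is exactly what makes the row-$\ell$ corner always present and what prevents bead blocks from crossing abacus rows; once this identity is established, the height condition and the (b)$\leftrightarrow$(c) verifications are routine.
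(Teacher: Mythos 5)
Your proposal is correct and follows essentially the same route as the paper: the map (a)$\to$(b) via the bead counts $(n_0,\dots,n_{t-1})$ of the bottom-packed $t$-abacus, with corners identified with maximal blocks of consecutive beta-numbers and counted by $\sum_{j}\max(n_j-n_{j-1},0)$, and the map (b)$\to$(c) reading the $n_i$ as flat-step heights joined by monotone runs. Your write-up merely makes explicit some details the paper asserts briefly (the block/corner dictionary, the fact that blocks cannot cross abacus rows since column $0$ is empty, and the positive-part/total-variation identity), so it is a faithful, slightly more detailed version of the paper's own argument.
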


\begin{proof}
Let $A, B$, and $C$ be the set described in (a), (b), and (c), respectively. Set the maps $\phi_1:A\rightarrow B$ and $\phi_2:B\rightarrow C$. For a partition $\la\in A$, let $n_i$ be the number of beads in the $i$th column of the $t$-abacus of $\la$. Given $\la\in A$, define $\phi_1(\la)=(n_0, n_1, \ldots, n_{t-1})$. Then, by the definition of the $t$-abacus and the fact that $h(1,1)<kt$, it is given that $n_0=0$ and $n_i \le k$ for each $i$. Moreover, we get one inner corner for each  maximal sequence of consecutive numbers in the beta-set $\beta(\la)$. 
Note that \( \sum_{i=1}^{t} \max(n_i-n_{i-1},0) \) counts the number of hook lengths which is the smallest among each maximal sequence of consecutive numbers in the beta-set, so we get $\sum_{i=1}^{t}|n_i-n_{i-1}|=2m$. 
Let $\psi_1:B\rightarrow A$ and $\vv{n}=(n_0, n_1, \ldots, n_{t-1})\in B$. Define $\psi_1(\vv{n})=\la$, where $\la$ is the partition obtained from the $t$-abacus diagram with $n_i$ beads in the $i$th column. We place the beads on the elements of $\beta(\la)$ in the $t$-abacus diagram.
Then, since column $0$ has no bead and each $n_i\le k$ for all $i$, the largest element in $\beta(\la)$ is less than $kt$, meaning that $\la$ is a $t$-core partition with $h(1,1)<kt$. Also, the fact that the sum of $|n_i-n_{i-1}|$ is $2m$ implies that there are $m$ piles of beads which are placed on $m$ maximal consecutive numbers, so $\la$ has $m$ corners. 

For $\vv{n}=(n_0, n_1, \ldots, n_{t-1})\in B$, let $\phi_2(\vv{n})=P_{\vv{n}}$, where $P_{\vv{n}}$ be the cornerless Motzkin path which starts at $(0,0)$, ends at $(2m+t-1,0)$, and has $t-1$ flat steps at height $n_1, n_2, \cdots, n_{t-1}$ with proper up and/or down steps connecting those flat steps. Due to the fact that $n_i\le k$, it is given that $P_{\vv{n}}$ is contained in the strip $0\le y\le k$. 

Let $\psi_2:C\rightarrow B$ and $P\in C$. Define $\psi_2(P)=(n_0, n_1, \ldots, n_{t-1})$, where $n_0=0$ and, for $1\le i\le t-1$, each $n_i$ represents the height of the $i$th flat step in $P$. We know that $P$ is contained in the strip $0\le y\le k$, which implies $n_i\le k$. On the path $P$, there are $2m$ many up and down steps. The number $|n_i-n_{i-1}|$ represents the difference of the height of the $(i-1)$st flat step and the $i$th flat step, so it counts the number of up or down steps in between those two flat steps. Since \( \sum_{i=1}^{t} \max(n_i-n_{i-1},0) =  \sum_{i=1}^{t} |\min(n_i-n_{i-1},0)|\), we get $\sum_{i=1}^t|n_i-n_{i-1}|=2m$.

\end{proof}

For example, there are sixteen \(4\)-core partitions with \(2\) corners. By letting \(t=4\) and \(m=2\) in Theorem~\ref{thm:corner}, we get the correspondence between these partitions, abaci, non-negative integer sequences, and cornerless Motzkin paths as described in Figure~\ref{fig:corner_example}.

\begin{figure}[hbt]
\centering
\footnotesize{
\begin{tikzpicture}[scale=0.3]

\node at (-22,0) {};
\node[left] at (-16,0.75) {\(\ast\) (4,4,2,2)};
\node at (-15.2,0.75) {\(\leftrightarrow\)};
\draw (-13.5,0) -- (-9.5,0);

\foreach \i in {0,1,2,3}
\node[above] at (-13+\i,-0.1) {\i};

\foreach \i in {4,5,6,7}
\node[above] at (-17+\i,0.9) {\i};

\foreach \i in {2,3}
\draw[above] (-13+\i,0.5) circle (13pt);

\foreach \i in {2,3}
\draw[above] (-13+\i,1.5) circle (13pt);

\node at (-7.8,0.75) {\(\leftrightarrow\)};

\node at (-4.5,0.75) {[0,0,2,2]};

\node at (-1,0.75) {\(\leftrightarrow\)};

\draw[gray, ->] (0.5,0) -- (8,0);
\draw[gray, ->] (0.5,0) -- (0.5,2.5);

\foreach \i in {1,2,...,7}
\draw[dotted] (0.5+\i,0) -- (0.5+\i,2)
;	

\foreach \i in {1,2}
\draw[dotted] (0.5,\i) -- (7.5,0\i);

\node at (9,0) {};

\draw[thick] 
(0.5,0) -- (1.5,0) -- (2.5,1) -- (3.5,2) -- (4.5,2) -- (5.5,2) -- (6.5,1) -- (7.5,0);
\end{tikzpicture}\\

\begin{tikzpicture}[scale=0.3]

\node at (-22,0) {};
\node[left] at (-16,0.75) {(2,2,2,1,1,1)};
\node at (-15.2,0.75) {\(\leftrightarrow\)};
\draw (-13.5,0) -- (-9.5,0);

\foreach \i in {0,1,2,3}
\node[above] at (-13+\i,-0.1) {\i};

\foreach \i in {4,5,6,7}
\node[above] at (-17+\i,0.9) {\i};

\foreach \i in {1,2,3}
\draw[above] (-13+\i,0.5) circle (13pt);

\foreach \i in {1,2,3}
\draw[above] (-13+\i,1.5) circle (13pt);

\node at (-7.8,0.75) {\(\leftrightarrow\)};

\node at (-4.5,0.75) {[0,2,2,2]};

\node at (-1,0.75) {\(\leftrightarrow\)};

\draw[gray, ->] (0.5,0) -- (8,0);
\draw[gray, ->] (0.5,0) -- (0.5,2.5);

\foreach \i in {1,2,...,7}
\draw[dotted] (0.5+\i,0) -- (0.5+\i,2)
;	

\foreach \i in {1,2}
\draw[dotted] (0.5,\i) -- (7.5,0\i);

\node at (9,0) {};
\node[right] at (8,1) {\(\star\)};

\draw[thick] 
(0.5,0) -- (1.5,1) -- (2.5,2) -- (3.5,2) -- (4.5,2) -- (5.5,2) -- (6.5,1) -- (7.5,0);
\end{tikzpicture}\\

\begin{tikzpicture}[scale=0.3]

\node at (-22,0) {};
\node[left] at (-16,0.75) {(3,3,1,1,1)};
\node at (-15.2,0.75) {\(\leftrightarrow\)};
\draw (-13.5,0) -- (-9.5,0);

\foreach \i in {0,1,2,3}
\node[above] at (-13+\i,-0.1) {\i};

\foreach \i in {4,5,6,7}
\node[above] at (-17+\i,0.9) {\i};

\foreach \i in {1,2,3}
\draw[above] (-13+\i,0.5) circle (13pt);

\foreach \i in {2,3}
\draw[above] (-13+\i,1.5) circle (13pt);

\node at (-7.8,0.75) {\(\leftrightarrow\)};

\node at (-4.5,0.75) {[0,1,2,2]};

\node at (-1,0.75) {\(\leftrightarrow\)};

\draw[gray, ->] (0.5,0) -- (8,0);
\draw[gray, ->] (0.5,0) -- (0.5,2.5);

\foreach \i in {1,2,...,7}
\draw[dotted] (0.5+\i,0) -- (0.5+\i,2)
;	

\foreach \i in {1,2}
\draw[dotted] (0.5,\i) -- (7.5,0\i);

\node at (9,0) {};
\draw[thick] 
(0.5,0) -- (1.5,1) -- (2.5,1) -- (3.5,2) -- (4.5,2) -- (5.5,2) -- (6.5,1) -- (7.5,0);
\end{tikzpicture}\\

\begin{tikzpicture}[scale=0.3]

\node at (-22,0) {};
\node[left] at (-16,0.75) {(5,2,2)};
\node at (-15.2,0.75) {\(\leftrightarrow\)};
\draw (-13.5,0) -- (-9.5,0);

\foreach \i in {0,1,2,3}
\node[above] at (-13+\i,-0.1) {\i};

\foreach \i in {4,5,6,7}
\node[above] at (-17+\i,0.9) {\i};

\foreach \i in {2,3}
\draw[above] (-13+\i,0.5) circle (13pt);

\foreach \i in {3}
\draw[above] (-13+\i,1.5) circle (13pt);

\node at (-7.8,0.75) {\(\leftrightarrow\)};

\node at (-4.5,0.75) {[0,0,1,2]};

\node at (-1,0.75) {\(\leftrightarrow\)};

\draw[gray, ->] (0.5,0) -- (8,0);
\draw[gray, ->] (0.5,0) -- (0.5,2.5);

\foreach \i in {1,2,...,7}
\draw[dotted] (0.5+\i,0) -- (0.5+\i,2)
;	

\foreach \i in {1,2}
\draw[dotted] (0.5,\i) -- (7.5,0\i);

\node at (9,0) {};

\draw[thick] 
(0.5,0) -- (1.5,0) -- (2.5,1) -- (3.5,1) -- (4.5,2) -- (5.5,2) -- (6.5,1) -- (7.5,0);
\end{tikzpicture}\\

\begin{tikzpicture}[scale=0.3]

\node at (-22,0) {};
\node[left] at (-16,0.75) {(6,3)};
\node at (-15.2,0.75) {\(\leftrightarrow\)};
\draw (-13.5,0) -- (-9.5,0);

\foreach \i in {0,1,2,3}
\node[above] at (-13+\i,-0.1) {\i};

\foreach \i in {4,5,6,7}
\node[above] at (-17+\i,0.9) {\i};

\foreach \i in {3}
\draw[above] (-13+\i,0.5) circle (13pt);

\foreach \i in {3}
\draw[above] (-13+\i,1.5) circle (13pt);

\node at (-7.8,0.75) {\(\leftrightarrow\)};

\node at (-4.5,0.75) {[0,0,0,2]};

\node at (-1,0.75) {\(\leftrightarrow\)};

\draw[gray, ->] (0.5,0) -- (8,0);
\draw[gray, ->] (0.5,0) -- (0.5,2.5);

\foreach \i in {1,2,...,7}
\draw[dotted] (0.5+\i,0) -- (0.5+\i,2)
;	

\foreach \i in {1,2}
\draw[dotted] (0.5,\i) -- (7.5,0\i);

\node at (9,0) {};

\draw[thick] 
(0.5,0) -- (1.5,0) -- (2.5,0) -- (3.5,1) -- (4.5,2) -- (5.5,2) -- (6.5,1) -- (7.5,0);
\end{tikzpicture}\\

\begin{tikzpicture}[scale=0.3]

\node at (-22,0) {};
\node[left] at (-16,0.75) {(3,3,1,1)};
\node at (-15.2,0.75) {\(\leftrightarrow\)};
\draw (-13.5,0) -- (-9.5,0);

\foreach \i in {0,1,2,3}
\node[above] at (-13+\i,-0.1) {\i};

\foreach \i in {4,5,6,7}
\node[above] at (-17+\i,0.9) {\i};

\foreach \i in {1,2}
\draw[above] (-13+\i,0.5) circle (13pt);

\foreach \i in {1,2}
\draw[above] (-13+\i,1.5) circle (13pt);

\node at (-7.8,0.75) {\(\leftrightarrow\)};

\node at (-4.5,0.75) {[0,2,2,0]};

\node at (-1,0.75) {\(\leftrightarrow\)};

\draw[gray, ->] (0.5,0) -- (8,0);
\draw[gray, ->] (0.5,0) -- (0.5,2.5);

\foreach \i in {1,2,...,7}
\draw[dotted] (0.5+\i,0) -- (0.5+\i,2)
;	

\foreach \i in {1,2}
\draw[dotted] (0.5,\i) -- (7.5,0\i);

\node at (9,0) {};

\draw[thick] 
(0.5,0) -- (1.5,1) -- (2.5,2) -- (3.5,2) -- (4.5,2) -- (5.5,1) -- (6.5,0) -- (7.5,0);
\end{tikzpicture}\\

\begin{tikzpicture}[scale=0.3]

\node at (-22,0) {};
\node[left] at (-16,0.75) {(4,2,2)};
\node at (-15.2,0.75) {\(\leftrightarrow\)};
\draw (-13.5,0) -- (-9.5,0);

\foreach \i in {0,1,2,3}
\node[above] at (-13+\i,-0.1) {\i};

\foreach \i in {4,5,6,7}
\node[above] at (-17+\i,0.9) {\i};

\foreach \i in {2,3}
\draw[above] (-13+\i,0.5) circle (13pt);

\foreach \i in {2}
\draw[above] (-13+\i,1.5) circle (13pt);

\node at (-7.8,0.75) {\(\leftrightarrow\)};

\node at (-4.5,0.75) {[0,0,2,1]};

\node at (-1,0.75) {\(\leftrightarrow\)};

\draw[gray, ->] (0.5,0) -- (8,0);
\draw[gray, ->] (0.5,0) -- (0.5,2.5);

\foreach \i in {1,2,...,7}
\draw[dotted] (0.5+\i,0) -- (0.5+\i,2)
;	

\foreach \i in {1,2}
\draw[dotted] (0.5,\i) -- (7.5,0\i);

\node at (9,0) {};

\draw[thick] 
(0.5,0) -- (1.5,0) -- (2.5,1) -- (3.5,2) -- (4.5,2) -- (5.5,1) -- (6.5,1) -- (7.5,0);
\end{tikzpicture}\\

\begin{tikzpicture}[scale=0.3]

\node at (-22,0) {};
\node[left] at (-16,0.75) {(2,2,1,1,1)};
\node at (-15.2,0.75) {\(\leftrightarrow\)};
\draw (-13.5,0) -- (-9.5,0);

\foreach \i in {0,1,2,3}
\node[above] at (-13+\i,-0.1) {\i};

\foreach \i in {4,5,6,7}
\node[above] at (-17+\i,0.9) {\i};

\foreach \i in {1,2,3}
\draw[above] (-13+\i,0.5) circle (13pt);

\foreach \i in {1,2}
\draw[above] (-13+\i,1.5) circle (13pt);

\node at (-7.8,0.75) {\(\leftrightarrow\)};

\node at (-4.5,0.75) {[0,2,2,1]};

\node at (-1,0.75) {\(\leftrightarrow\)};

\draw[gray, ->] (0.5,0) -- (8,0);
\draw[gray, ->] (0.5,0) -- (0.5,2.5);

\foreach \i in {1,2,...,7}
\draw[dotted] (0.5+\i,0) -- (0.5+\i,2)
;	

\foreach \i in {1,2}
\draw[dotted] (0.5,\i) -- (7.5,0\i);

\node at (9,0) {};

\draw[thick] 
(0.5,0) -- (1.5,1) -- (2.5,2) -- (3.5,2) -- (4.5,2) -- (5.5,1) -- (6.5,1) -- (7.5,0);
\end{tikzpicture}\\

\begin{tikzpicture}[scale=0.3]

\node at (-22,0) {};
\node[left] at (-16,0.75) {\(\ast\) (4,1,1,1)};
\node at (-15.2,0.75) {\(\leftrightarrow\)};
\draw (-13.5,0) -- (-9.5,0);

\foreach \i in {0,1,2,3}
\node[above] at (-13+\i,-0.1) {\i};

\foreach \i in {4,5,6,7}
\node[above] at (-17+\i,0.9) {\i};

\foreach \i in {1,2,3}
\draw[above] (-13+\i,0.5) circle (13pt);

\foreach \i in {3}
\draw[above] (-13+\i,1.5) circle (13pt);

\node at (-7.8,0.75) {\(\leftrightarrow\)};

\node at (-4.5,0.75) {[0,1,1,2]};

\node at (-1,0.75) {\(\leftrightarrow\)};

\draw[gray, ->] (0.5,0) -- (8,0);
\draw[gray, ->] (0.5,0) -- (0.5,2.5);

\foreach \i in {1,2,...,7}
\draw[dotted] (0.5+\i,0) -- (0.5+\i,2)
;	

\foreach \i in {1,2}
\draw[dotted] (0.5,\i) -- (7.5,0\i);

\node at (9,0) {};

\draw[thick] 
(0.5,0) -- (1.5,1) -- (2.5,1) -- (3.5,1) -- (4.5,2) -- (5.5,2) -- (6.5,1) -- (7.5,0);
\end{tikzpicture}\\

\begin{tikzpicture}[scale=0.3]

\node at (-22,0) {};
\node[left] at (-16,0.75) {(5,2)};
\node at (-15.2,0.75) {\(\leftrightarrow\)};
\draw (-13.5,0) -- (-9.5,0);

\foreach \i in {0,1,2,3}
\node[above] at (-13+\i,-0.1) {\i};

\foreach \i in {4,5,6,7}
\node[above] at (-17+\i,0.9) {\i};

\foreach \i in {2}
\draw[above] (-13+\i,0.5) circle (13pt);

\foreach \i in {2}
\draw[above] (-13+\i,1.5) circle (13pt);

\node at (-7.8,0.75) {\(\leftrightarrow\)};

\node at (-4.5,0.75) {[0,0,2,0]};

\node at (-1,0.75) {\(\leftrightarrow\)};

\draw[gray, ->] (0.5,0) -- (8,0);
\draw[gray, ->] (0.5,0) -- (0.5,2.5);

\foreach \i in {1,2,...,7}
\draw[dotted] (0.5+\i,0) -- (0.5+\i,2)
;	

\foreach \i in {1,2}
\draw[dotted] (0.5,\i) -- (7.5,0\i);

\node at (9,0) {};
\node[right] at (8,1) {\(\star\)};

\draw[thick] 
(0.5,0) -- (1.5,0) -- (2.5,1) -- (3.5,2) -- (4.5,2) -- (5.5,1) -- (6.5,0) -- (7.5,0);
\end{tikzpicture}\\

\begin{tikzpicture}[scale=0.3]

\node at (-22,0) {};
\node[left] at (-16,0.75) {(3,1,1,1)};
\node at (-15.2,0.75) {\(\leftrightarrow\)};
\draw (-13.5,0) -- (-9.5,0);

\foreach \i in {0,1,2,3}
\node[above] at (-13+\i,-0.1) {\i};

\foreach \i in {4,5,6,7}
\node[above] at (-17+\i,0.9) {\i};

\foreach \i in {1,2,3}
\draw[above] (-13+\i,0.5) circle (13pt);

\foreach \i in {2}
\draw[above] (-13+\i,1.5) circle (13pt);

\node at (-7.8,0.75) {\(\leftrightarrow\)};

\node at (-4.5,0.75) {[0,1,2,1]};

\node at (-1,0.75) {\(\leftrightarrow\)};

\draw[gray, ->] (0.5,0) -- (8,0);
\draw[gray, ->] (0.5,0) -- (0.5,2.5);

\foreach \i in {1,2,...,7}
\draw[dotted] (0.5+\i,0) -- (0.5+\i,2)
;	

\foreach \i in {1,2}
\draw[dotted] (0.5,\i) -- (7.5,0\i);

\node at (9,0) {};
\node[right] at (8,1) {\(\star\)};

\draw[thick] 
(0.5,0) -- (1.5,1) -- (2.5,1) -- (3.5,2) -- (4.5,2) -- (5.5,1) -- (6.5,1) -- (7.5,0);
\end{tikzpicture}\\

\begin{tikzpicture}[scale=0.3]

\node at (-22,0) {};
\node[left] at (-16,0.75) {(4,1,1)};
\node at (-15.2,0.75) {\(\leftrightarrow\)};
\draw (-13.5,0) -- (-9.5,0);

\foreach \i in {0,1,2,3}
\node[above] at (-13+\i,-0.1) {\i};

\foreach \i in {4,5,6,7}
\node[above] at (-17+\i,0.9) {\i};

\foreach \i in {1,2}
\draw[above] (-13+\i,0.5) circle (13pt);

\foreach \i in {2}
\draw[above] (-13+\i,1.5) circle (13pt);

\node at (-7.8,0.75) {\(\leftrightarrow\)};

\node at (-4.5,0.75) {[0,1,2,0]};

\node at (-1,0.75) {\(\leftrightarrow\)};

\draw[gray, ->] (0.5,0) -- (8,0);
\draw[gray, ->] (0.5,0) -- (0.5,2.5);

\foreach \i in {1,2,...,7}
\draw[dotted] (0.5+\i,0) -- (0.5+\i,2)
;	

\foreach \i in {1,2}
\draw[dotted] (0.5,\i) -- (7.5,0\i);

\node at (9,0) {};

\draw[thick] 
(0.5,0) -- (1.5,1) -- (2.5,1) -- (3.5,2) -- (4.5,2) -- (5.5,1) -- (6.5,0) -- (7.5,0);
\end{tikzpicture}\\

\begin{tikzpicture}[scale=0.3]

\node at (-22,0) {};
\node[left] at (-16,0.75) {(2,1,1,1)};
\node at (-15.2,0.75) {\(\leftrightarrow\)};
\draw (-13.5,0) -- (-9.5,0);

\foreach \i in {0,1,2,3}
\node[above] at (-13+\i,-0.1) {\i};

\foreach \i in {4,5,6,7}
\node[above] at (-17+\i,0.9) {\i};

\foreach \i in {1,2,3}
\draw[above] (-13+\i,0.5) circle (13pt);

\foreach \i in {1}
\draw[above] (-13+\i,1.5) circle (13pt);

\node at (-7.8,0.75) {\(\leftrightarrow\)};

\node at (-4.5,0.75) {[0,2,1,1]};

\node at (-1,0.75) {\(\leftrightarrow\)};

\draw[gray, ->] (0.5,0) -- (8,0);
\draw[gray, ->] (0.5,0) -- (0.5,2.5);

\foreach \i in {1,2,...,7}
\draw[dotted] (0.5+\i,0) -- (0.5+\i,2)
;	

\foreach \i in {1,2}
\draw[dotted] (0.5,\i) -- (7.5,0\i);

\node at (9,0) {};

\draw[thick] 
(0.5,0) -- (1.5,1) -- (2.5,2) -- (3.5,2) -- (4.5,1) -- (5.5,1) -- (6.5,1) -- (7.5,0);
\end{tikzpicture}\\

\begin{tikzpicture}[scale=0.3]

\node at (-22,0) {};
\node[left] at (-16,0.75) {\(\ast\) (3,1,1)};
\node at (-15.2,0.75) {\(\leftrightarrow\)};
\draw (-13.5,0) -- (-9.5,0);

\foreach \i in {0,1,2,3}
\node[above] at (-13+\i,-0.1) {\i};

\foreach \i in {4,5,6,7}
\node[above] at (-17+\i,0.9) {\i};

\foreach \i in {1,2}
\draw[above] (-13+\i,0.5) circle (13pt);

\foreach \i in {1}
\draw[above] (-13+\i,1.5) circle (13pt);

\node at (-7.8,0.75) {\(\leftrightarrow\)};

\node at (-4.5,0.75) {[0,2,1,0]};

\node at (-1,0.75) {\(\leftrightarrow\)};

\draw[gray, ->] (0.5,0) -- (8,0);
\draw[gray, ->] (0.5,0) -- (0.5,2.5);

\foreach \i in {1,2,...,7}
\draw[dotted] (0.5+\i,0) -- (0.5+\i,2)
;	

\foreach \i in {1,2}
\draw[dotted] (0.5,\i) -- (7.5,0\i);

\node at (9,0) {};

\draw[thick] 
(0.5,0) -- (1.5,0) -- (2.5,1) -- (3.5,2) -- (4.5,2) -- (5.5,2) -- (6.5,1) -- (7.5,0);
\end{tikzpicture}\\

\begin{tikzpicture}[scale=0.3]

\node at (-22,0) {};
\node[left] at (-16,0.75) {(4,1)};
\node at (-15.2,0.75) {\(\leftrightarrow\)};
\draw (-13.5,0) -- (-9.5,0);

\foreach \i in {0,1,2,3}
\node[above] at (-13+\i,-0.1) {\i};

\foreach \i in {4,5,6,7}
\node[above] at (-17+\i,0.9) {\i};

\foreach \i in {1}
\draw[above] (-13+\i,0.5) circle (13pt);

\foreach \i in {1}
\draw[above] (-13+\i,1.5) circle (13pt);

\node at (-7.8,0.75) {\(\leftrightarrow\)};

\node at (-4.5,0.75) {[0,2,0,0]};

\node at (-1,0.75) {\(\leftrightarrow\)};

\draw[gray, ->] (0.5,0) -- (8,0);
\draw[gray, ->] (0.5,0) -- (0.5,2.5);

\foreach \i in {1,2,...,7}
\draw[dotted] (0.5+\i,0) -- (0.5+\i,2)
;	

\foreach \i in {1,2}
\draw[dotted] (0.5,\i) -- (7.5,0\i);

\node at (9,0) {};

\draw[thick] 
(0.5,0) -- (1.5,1) -- (2.5,2) -- (3.5,2) -- (4.5,1) -- (5.5,0) -- (6.5,0) -- (7.5,0);
\end{tikzpicture}\\

\begin{tikzpicture}[scale=0.3]

\node at (-22,0) {};
\node[left] at (-16,0.75) {\(\ast\) (2,1)};
\node at (-15.2,0.75) {\(\leftrightarrow\)};
\draw (-13.5,0) -- (-9.5,0);

\foreach \i in {0,1,2,3}
\node[above] at (-13+\i,-0.1) {\i};

\foreach \i in {4,5,6,7}
\node[above] at (-17+\i,0.9) {\i};

\foreach \i in {1,3}
\draw[above] (-13+\i,0.5) circle (13pt);

\node at (-7.8,0.75) {\(\leftrightarrow\)};

\node at (-4.5,0.75) {[0,1,0,1]};

\node at (-1,0.75) {\(\leftrightarrow\)};

\draw[gray, ->] (0.5,0) -- (8,0);
\draw[gray, ->] (0.5,0) -- (0.5,2.5);

\foreach \i in {1,2,...,7}
\draw[dotted] (0.5+\i,0) -- (0.5+\i,2)
;	

\foreach \i in {1,2}
\draw[dotted] (0.5,\i) -- (7.5,0\i);

\node at (9,0) {};
\node[right] at (8,1) {\(\star\)};

\draw[thick] 
(0.5,0) -- (1.5,1) -- (2.5,1) -- (3.5,0) -- (4.5,0) -- (5.5,1) -- (6.5,1) -- (7.5,0);
\end{tikzpicture}\\
}
\caption{\(4\)-cores with \(2\) corners and the corresponding objects}
\label{fig:corner_example}
\end{figure}
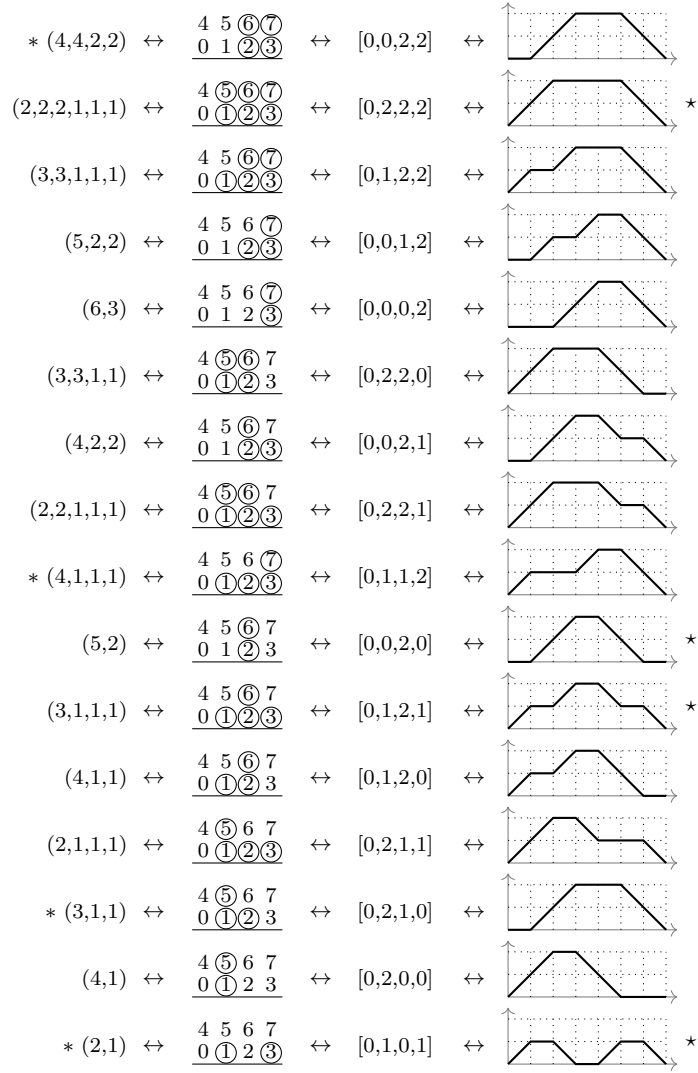

We denote that a partition $\la$ is a $(t_1, t_2, \ldots, t_p)$-core if $\la$ is a $t_i$-core for all $i=1,\dots,p$. It is known that the number of $t$-core partitions is infinite, and the number of $(t_1, t_2, \ldots, t_p)$-cores is finite for relatively prime $t_1,\dots,t_p$. Huang and Wang \cite{HW} enumerated the number of $(t,t+1)$-cores, $(t,t+1,t+2)$-cores with the fixed number of corners, where these results are generalized to $(t,t+1, \cdots, t+p)$-cores in \cite{CHS}.
As far as we know, it seems new to get the formula for the number of $t$-core partitions with the fixed number of corners, which we enumerate this by using the path interpretation. 

\begin{prop}\label{prop:count_ordinary}
The number of \(t\)-core partitions with \(m\) corners is given by
\[
\cc(t,m):=\sum_{i=1}^{\min(m,\lf t/2 \rf)} N(m,i) \binom{t+2m-2i}{2m},
\]
where \(N(m,i)=\frac{1}{m}\binom{m}{i}\binom{m}{i-1}\) denotes the Narayana number.
\end{prop}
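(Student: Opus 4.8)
The plan is to combine Theorem~\ref{thm:corner} with a peak-counting decomposition of cornerless Motzkin paths. First I would use parts (a) and (c) of Theorem~\ref{thm:corner} to replace $t$-core partitions with paths. A $t$-core partition with $m$ corners corresponds, under the bijection $(b)$, to a sequence $(n_0,\dots,n_{t-1})$ with $n_0=0$, $n_t:=0$, and $\sum_{i=1}^t|n_i-n_{i-1}|=2m$; since the total positive variation equals $m$, every partial sum satisfies $n_j\le m$. Hence taking $k\ge m$ makes the bounds $n_i\le k$ and $h(1,1)<kt$ vacuous, and the union over $k$ in Theorem~\ref{thm:corner} stabilizes. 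Therefore the number of $t$-core partitions with $m$ corners equals the number of \emph{all} cornerless Motzkin paths of length $2m+t-1$ with exactly $t-1$ flat steps. Such a path has $2m$ non-flat steps and returns to the $x$-axis, so it has $m$ up steps and $m$ down steps.

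The key step is to strip the flat steps. Deleting every flat step from such a path $P$ yields a path $D$ with $m$ up and $m$ down steps that remains weakly above the $x$-axis, i.e. a Dyck path of semilength $m$. I would show that the assignment $P\mapsto(D,\,\text{placement of the }t-1\text{ flats into the gaps of }D)$ is a bijection, where the gaps of $D$ are the $2m+1$ positions consisting of the two ends and the $2m-1$ internal positions. The cornerless condition translates exactly: a peak ($ud$) or valley ($du$) of $D$ produces a corner of $P$ unless at least one flat is inserted into that particular gap, whereas the remaining gaps (those of type $uu$ or $dd$, together with the two external ones) may receive arbitrarily many flats with no constraint. Since a Dyck path with $i$ peaks has exactly $i-1$ valleys (peaks and valleys alternate, beginning and ending with a peak), the path $P$ is cornerless if and only if each of the $2i-1$ distinguished peak/valley gaps receives at least one flat step.

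With this dictionary the count factors by the number of peaks $i$ of $D$. There are $N(m,i)$ Dyck paths of semilength $m$ with $i$ peaks, by the standard Narayana enumeration. For each, I would place one mandatory flat in each of the $2i-1$ peak/valley gaps and then distribute the remaining $t-1-(2i-1)=t-2i$ flats freely among all $2m+1$ gaps; by stars and bars the number of such distributions is
\[
\binom{(t-2i)+(2m+1)-1}{2m}=\binom{t+2m-2i}{2m}.
\]
Summing the product over the admissible range of $i$ — where $i\ge1$, $i\le m$ because there cannot be more peaks than the semilength, and $2i-1\le t-1$ (equivalently $i\le\lf t/2\rf$) so that enough flats are available — yields exactly
\[
\cc(t,m)=\sum_{i=1}^{\min(m,\lf t/2\rf)}N(m,i)\binom{t+2m-2i}{2m}.
\]

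The main obstacle I expect is the bookkeeping in the flat-insertion bijection: one must verify that stripping flats always produces a genuine Dyck path, that the peak/valley-to-corner correspondence is exact so that ``cornerless'' is equivalent to ``at least one flat in each of the $2i-1$ distinguished gaps,'' and that allowing the surplus $t-2i$ flats back into the already-occupied mandatory gaps introduces no double counting (the decomposition records only the total multiset of flats per gap, so it does not). A secondary point to pin down cleanly is the reduction from the bounded setting of Theorem~\ref{thm:corner} to unbounded height, for which the bound $n_j\le m$ established above makes $k\ge m$ suffice.
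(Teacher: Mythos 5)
Your proposal is correct and follows essentially the same route as the paper: reduce via Theorem~\ref{thm:corner} to counting cornerless Motzkin paths of length $2m+t-1$ with $t-1$ flat steps, then classify by the underlying Dyck path with $i$ peaks (counted by $N(m,i)$) and insert the flat steps with one mandatory flat at each of the $i$ peaks and $i-1$ valleys, yielding $\binom{t+2m-2i}{2m}$ placements. Your write-up is more explicit than the paper's on two points it leaves implicit — the stabilization of the bound $k$ (via $n_j\le m$) and the stars-and-bars computation — but the argument is the same.
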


\begin{proof}
By Theorem~\ref{thm:corner}, \(\cc(t,m)\) is equal to the number of cornerless Motzkin paths of length \(2m+t-1\) with \(t-1\) flat steps. Let a Dyck path consisting of \(m\) up steps and \(m\) down steps with \(i\) peaks be given. The number of ways of inserting \(t-1\) flat steps such that the resultant path becomes a cornerless Motzkin path is \(\binom{t+2m-2i}{2m}\) since we have to insert at least one flat steps at the positions of \(i\) peaks and \(i-1\) valleys. As the number of Dyck paths consisting of \(m\) up steps and \(m\) down steps with \(i\) peaks is counted by the Narayana number \(N(m,i)\), the proof is followed.
\end{proof}

The numbers of \(t\)-core partitions with \(m\) corners for \(2\leq t\leq 6\) and \(1\leq m \leq 8\) are given in  Table~\ref{table}. Clearly, \(\cc(2,m)=1\), \(\cc(3,m)=2m+1\), and \(\cc(4,m)=(5m^2+5m+2)/2\).
See sequences A063490 and A160747 in \cite{OEIS} for more the values of \(\cc(t,m)\) for \(t=5\) and \(t=6\), respectively. 

\begin{table}[htb!]
\centering
\begin{tabular}{c|cccccccccccccccc}
\noalign{\smallskip}\noalign{\smallskip}
\(t \backslash m\)&& 1 && 2 && 3 && 4 && 5 && 6 && 7 && 8 \\
\hline
2 && 1 && 1 && 1 && 1 && 1 && 1 && 1 && 1\\
3 && 3 && 5 && 7 && 9 && 11 && 13 && 15 && 17\\
4 && 6 && 16 && 31 && 51 && 76 && 106 && 141 && 181\\
5 && 10 && 40 && 105 && 219 && 396 && 650 && 995 && 1445\\
6 && 15 && 85 && 295 && 771 && 1681 && 3235 && 5685 && 9325\\

\end{tabular}
\caption{The numbers \(\cc(t,m)\) of \(t\)-cores with \(m\) corners}\label{table}
\end{table}


\subsection{Cornerless symmetric Motzkin paths and self-conjugate \(t\)-cores}\label{sec:self}

For a partition \(\la\), its \emph{conjugate} is the partition \(\la'=(\la'_1,\la'_2,\dots)\), where each \(\la'_j\) is the number of boxes in the \(j\)th column of the Young diagram of \(\la\). A partition \(\la\) is called \emph{self-conjugate} if \(\la=\la'\).
Let \(\MD(\la)\) denote the set of the main diagonal hook lengths of \(\la\). Note that if \(\la\) is a self-conjugate partition, then the elements in \(\MD(\la)\) are all distinct and odd. Similar to Lemma~\ref{lem:core}, Ford, Mai, and Sze \cite{FMS} gave a useful result to determine whether a given partition is self-conjugate \(t\)-core or not.

\begin{prop}\cite[Proposition 3]{FMS}\label{prop:self-conjugate}
Let \(\la\) be a self-conjugate partition. Then \(\la\) is a \(t\)-core if and only if both of the following hold:
\begin{enumerate}
    \item[(a)] For \(h>t\), if \(h\in\MD(\la)\), then \(h-2t\in\MD(\la)\).
    \item[(b)] If \(h_1,h_2\in\MD(\la)\), then \(h_1+h_2\neq 2t\).
\end{enumerate}
\end{prop}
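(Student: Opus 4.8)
The plan is to move both hypotheses into a single combinatorial model — the doubly–infinite \emph{Maya diagram} of \(\la\) (a charge-symmetric version of the abacus of Lemma~\ref{lem:core}) — and then read conditions (a) and (b) off as the two distinct ways a bead can be slid downward by \(t\). First I would record a preliminary dictionary lemma. Encode \(\la\) by placing a bead at each half-integer position \(p_i=\la_i-i+\tfrac12\) for \(i\ge 1\); then all sufficiently negative half-integers carry beads, and \(\la\) is recovered from the bead set \(B\subset\mathbb{Z}+\tfrac12\). The two facts I would prove (or quote) are: \(\la\) is self-conjugate exactly when \(B\) is antisymmetric, meaning \(p\in B\iff -p\notin B\); and under this symmetry the beads at positive positions correspond precisely to the main-diagonal cells, the bead at \(p>0\) sitting on the cell whose diagonal hook length is \(2p\). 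Hence \(\MD(\la)=\{\,2p: p\in B,\ p>0\,\}\) is a set of distinct odd positive integers and determines \(\la\) completely.

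Next I would restate Lemma~\ref{lem:core} in this language: since the \(t\)-core property is independent of the charge, it reads \(\la\) is a \(t\)-core iff no bead can be moved down by \(t\) into a spacer, i.e. iff \(p\in B\Rightarrow p-t\in B\) for every position \(p\). The core of the argument is a sign analysis for a positive bead \(p\) (the half-integer parity guarantees \(p-t\neq 0\)). When \(p-t>0\), the requirement \(p-t\in B\) says \(2(p-t)=2p-2t\in\MD\); letting \(h=2p\) this is the closure statement \(h-2t\in\MD\), which is condition (a). When \(p-t<0\), I would rewrite \(p-t\in B\) through the antisymmetry as \(t-p\notin B\); since \(t-p>0\) this means \(2t-h\notin\MD\), i.e. no second diagonal hook is \(2t\) minus the first, which is condition (b). Conversely, assuming (a) and (b) one recovers \(p-t\in B\) in each case, and for beads with \(p<0\) the antisymmetry together with (a) (applied to the large hook \(2t-2p>2t\)) forces \(p-t\in B\) as well; thus (a) and (b) together are equivalent to the \(t\)-core property.

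The main obstacle I anticipate is the bookkeeping at the two delicate points of the model. The first is the crossing of the origin: one must check that a positive bead whose image \(p-t\) changes sign is routed into (b) rather than (a), with no gap or overlap at the boundary \(2p\approx 2t\), and that \(p-t\) can never land on \(0\) — both handled by the half-integer versus integer parity. The second is showing that the conditions arising from negative beads produce no constraints beyond (a) and (b); this is where I expect the argument to feel subtle, since it requires feeding a bead on the negative side back through the antisymmetry \(p\in B\iff-p\notin B\) and reusing (a) to close the loop. Once the preliminary dictionary lemma is in place, however, everything collapses to that single symmetry, so the equivalence should follow by a clean sign analysis rather than any further computation.
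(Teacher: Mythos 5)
Your Maya-diagram argument is correct in substance, and it is essentially \emph{the} standard proof: note that the paper itself offers no proof of this proposition (it is quoted from \cite{FMS}), and the antisymmetric bead set you introduce is exactly the mechanism behind the paper's own \(t\)-doubled abacus in Lemma~\ref{lem:self-conjugate}. Your dictionary (self-conjugacy \(\Leftrightarrow\) \(p\in B \Leftrightarrow -p\notin B\); \(\MD(\la)=\{2p : p\in B,\ p>0\}\)), the restatement of Lemma~\ref{lem:core} as closure of \(B\) under \(p\mapsto p-t\), and the three-way sign analysis --- including the reduction of the negative-bead constraints back to condition (a) via antisymmetry --- are all correct and complete.

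There is, however, one genuine mismatch that you did not notice, precisely at the step ``which is condition (a).'' Your case \(p-t>0\) yields the closure statement only for \(h=2p>2t\), whereas condition (a) as printed asserts it for every \(h>t\). These are not equivalent, and the printed form is in fact false: take \(t=2\) and \(\la=(2,1)\), a self-conjugate \(2\)-core with \(\MD(\la)=\{3\}\); then \(3>t\) but \(3-2t=-1\notin\MD(\la)\), so printed (a) fails even though \(\la\) is a \(2\)-core. What your argument proves --- correctly --- is the version with threshold \(h>2t\), which is the form stated in \cite{FMS} and the only form consistent with the paper's own Lemma~\ref{lem:self-conjugate}(a) (beads in each column of the doubled abacus are contiguous down to row \(0\), resp.\ \(-1\)). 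In other words, the transcription of the proposition in this paper carries a typo (\(h>t\) should read \(h>2t\)); your proof establishes the corrected statement, but as written it silently proves something different from the statement it claims to prove, and you should flag that for \(t<h<2t\) the printed requirement \(h-2t\in\MD(\la)\) is vacuously impossible. The converse direction of your argument is unaffected, since printed (a) is stronger than the version you use; it is only the forward implication (\(t\)-core \(\Rightarrow\) (a)) that cannot hold with the printed threshold.
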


We slightly modify the \(t\)-abacus to get the \emph{\(t\)-doubled abacus}, which is useful when we deal with a self-conjugate \(t\)-core partition. Let the \emph{\(t\)-doubled abacus diagram} is a left-justified diagram with infinitely many rows labeled by \(i\in\mathbb{Z}\) and \(\lf t/2 \rf\) columns labeled by \(j=0,1,\dots,\lf t/2\rf -1\) whose position \((i,j)\) is labeled by \(|2(ti+j)+1|\). The \emph{\(t\)-doubled abacus} of a self-conjugate partition \(\la\) is obtained from the \(t\)-doubled abacus diagram by placing a bead on each position labeled by \(h\), where \(h\in\MD(\la)\). From Proposition~\ref{prop:self-conjugate}, we have the following lemma. 

\begin{lem}\label{lem:self-conjugate}
A self-conjugate partition \(\la\) is \(t\)-core if and only if the \(t\)-doubled abacus diagram of \(\la\) satisfies both of the following.
\begin{enumerate}
    \item[(a)] If a bead is placed on position \((i,j)\) with \(i>0\) (resp. \(i<0\)), then a bead is also placed on position \((0,j)\) (resp. \((-1,j)\)) and there is no spacer between them in any column \(j\).
    \item[(b)] A bead can be placed on at most one of the two positions \((-1,j)\) and \((0,j)\) in any column \(j\).
\end{enumerate}
\end{lem}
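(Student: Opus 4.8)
The plan is to translate each of the two conditions in Proposition~\ref{prop:self-conjugate} directly into a statement about the $t$-doubled abacus, using the explicit labels $|2(ti+j)+1|$. The starting point is to record, column by column, which hook lengths can occupy which positions. Fixing a column $j$ with $0\le j\le \lf t/2\rf-1$, the positions split into an \emph{upper part} ($i\ge 0$), where $2(ti+j)+1\ge 1$ so the label equals $2ti+2j+1$, and a \emph{lower part} ($i\le -1$), where $2(ti+j)+1<0$ so the label equals $2t|i|-2j-1$. Thus the upper part of column $j$ carries exactly the positive odd integers congruent to $2j+1\pmod{2t}$, listed in increasing order as $i=0,1,2,\dots$, while the lower part carries exactly those congruent to $2t-2j-1\pmod{2t}$, listed in increasing order as $i=-1,-2,\dots$. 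I would first check that this assignment is a bijection between abacus positions and all positive odd integers when $t$ is even, and a bijection onto the positive odd integers \emph{not} divisible by $t$ when $t$ is odd. In the odd case the missing values are exactly the odd multiples of $t$; but these can never lie in $\MD(\la)$ for a self-conjugate $t$-core, since condition~(a) of Proposition~\ref{prop:self-conjugate} would force $t\in\MD(\la)$ and then $t+t=2t$ would violate condition~(b). Hence the doubled abacus faithfully encodes precisely the partitions relevant to the statement.

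Next I would translate condition (a). The key observation is that subtracting $2t$ from a label moves one position \emph{toward the central boundary} between rows $-1$ and $0$: in the upper part $2ti+2j+1-2t=2t(i-1)+2j+1$ is the label of $(i-1,j)$, and in the lower part $2t|i|-2j-1-2t=2t(|i|-1)-2j-1$ is the label of $(i+1,j)$. Consequently, for a bead in the upper part at $(i,j)$ with $i>0$ (its label is then at least $2t+1$, so Proposition~\ref{prop:self-conjugate}(a) applies), repeatedly invoking "$h\mapsto h-2t\in\MD(\la)$" forces beads at $(i-1,j),(i-2,j),\dots,(0,j)$, that is, no spacer between $(i,j)$ and $(0,j)$; symmetrically a bead in the lower part at $(i,j)$ with $i<-1$ forces beads up to $(-1,j)$. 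This is exactly condition~(a) of Lemma~\ref{lem:self-conjugate}. Conversely, the no-spacer conditions give closure of $\MD(\la)$ under $h\mapsto h-2t$ whenever the image stays positive, which is condition~(a) of Proposition~\ref{prop:self-conjugate}.

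For condition (b), I would show that two positive odd integers $h_1,h_2$ with $h_1+h_2=2t$ are always the labels $2j+1$ and $2t-2j-1$ of the innermost positions $(0,j)$ and $(-1,j)$ of a single common column $j$. Indeed, exactly one of $h_1,h_2$ is smaller than $t$ (or both equal $t$); writing $h_1=2j+1<t$ forces $0\le j\le\lf t/2\rf-1$, whence $h_2=2t-2j-1$ is the label of $(-1,j)$, while the degenerate case $h_1=h_2=t$ (possible only for odd $t$) corresponds to the unrepresentable center. Therefore "$h_1+h_2\neq 2t$ for all $h_1,h_2\in\MD(\la)$" is equivalent to "at most one of $(-1,j),(0,j)$ carries a bead, for every $j$," which is condition~(b) of Lemma~\ref{lem:self-conjugate}. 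Since each translation is an equivalence, assembling them gives both implications of the stated characterization at once.

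The main obstacle I expect is the careful bookkeeping at the central boundary, namely the base beads of each column. The upper base $(0,j)$ has label $2j+1<t$, so Proposition~\ref{prop:self-conjugate}(a) never constrains it; but the lower base $(-1,j)$ has label $2t-2j-1\in(t,2t)$, so $h-2t$ drops out of the positive integers and must be read as lying "off the board," i.e.\ condition~(a) carries content only while $h-2t>0$. Getting the two halves to cascade \emph{toward}, rather than \emph{across}, this boundary, and confirming that the sole interaction spanning it is the single pair $\{(0,j),(-1,j)\}$ governed by condition~(b), is the delicate part; the reversal of the direction of $h\mapsto h-2t$ between the upper and lower parts is what makes this error-prone. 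Once that direction is pinned down separately in each part, and the faithfulness of the encoding (from the first paragraph) is in hand, the remaining verifications are routine.
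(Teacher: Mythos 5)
Your proposal is, in substance, the paper's own argument: the paper offers no proof of Lemma~\ref{lem:self-conjugate} beyond the remark that it follows from Proposition~\ref{prop:self-conjugate}, and the intended derivation is exactly your column-by-column translation through the labels \(|2(ti+j)+1|\). Moreover, your two ``delicate points'' are necessary repairs, not optional bookkeeping. The paper's transcription of the Ford--Mai--Sze condition (a), ``for \(h>t\)'', cannot be taken literally: \(\la=(4,4,2,2)\) is a self-conjugate \(4\)-core with \(\MD(\la)=\{7,5\}\), and \(7-8<0\), so under the literal reading every bead on row \(-1\) would be outlawed and the lemma could not hold. Your reading --- condition (a) carries content only when \(h-2t>0\), i.e.\ the hypothesis is really \(h>2t\) --- is the correct statement of \cite[Proposition 3]{FMS}, and with it your cascade argument (subtracting \(2t\) moves one row toward the boundary between rows \(-1\) and \(0\)) and your pairing of \(\{2j+1,\,2t-2j-1\}\) with the positions \(\{(0,j),(-1,j)\}\) translate conditions (a) and (b) correctly in both directions.

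The one step that fails as written is the faithfulness claim when used in the ``if'' direction. You prove that no element of \(\MD(\la)\) can be an odd multiple of \(t\) \emph{when \(\la\) is a \(t\)-core}; but in the implication ``abacus satisfies (a),(b) \(\Rightarrow\) \(\la\) is a \(t\)-core'' that is precisely what is not yet known, so invoking faithfulness there is circular. The gap is not vacuous: for \(t=3\) and \(\la=(5,1,1,1,1)\) one has \(\MD(\la)=\{9\}\); the label \(9\) occurs nowhere on the \(3\)-doubled abacus diagram, hence no bead is placed at all, conditions (a) and (b) hold vacuously, and yet \(\la\) is not a \(3\)-core. So the lemma, read literally, requires the convention that the \(t\)-doubled abacus of \(\la\) is admissible only when every element of \(\MD(\la)\) actually labels a position (no bead may be ``lost''); under that convention your proof is complete, and your own first-paragraph argument shows the convention is consistent, since a partition with a lost bead is never a \(t\)-core. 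This defect is inherited from the paper, which states the lemma without proof and only ever applies it to \(t\)-cores, but your write-up should make the convention explicit rather than assert that the encoding is faithful for ``precisely the partitions relevant to the statement''.
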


From the above lemma, we easily obtain a simple bijection between the set of self-conjugate \(t\)-core partitions and the set of integer sequences \((n_{0},\dots,n_{\lf t/2\rf-1})\), where the number of beads in column \(j\) is denoted by either \(n_j\) or \(-n_j\) for \(j=0,1,\dots,\lf t/2 \rf -1\) if a bead is placed in position \((0,j)\) or not, respectively. Now we give a path interpretation of the self-conjugate \(t\)-core partitions restricted by the number of corners and the first hook length \(h(1,1)\). We define 
\[
\F_c(m,r,k):=\bigcup_{i=0}^{k}\oF_c(m,r,i) \quad \text{and} \quad \M_c(m,r,k):=\bigcup_{i=0}^{k}\oM_c(m,r,i).
\]

\begin{thm}\label{thm:self-conjugate} 
For non-negative integers \(t\), \(m\), and \(k\), there is a bijection between any pair of the following sets.
\begin{enumerate}
\item[(a)] The set of self-conjugate \(t\)-cores with \(m\) corners such that \(h(1,1)<kt\).
\item[(b)] The set of integer sequences \((n_0,n_1,\dots, n_{\lf t/2 \rf-1})\) satisfying that for odd (resp. even) \(m\), \(n_0\) is positive (resp. non-positive); for all \(i\), \(-\lf k/2 \rf \leq n_i\leq \lf (k+1)/2\rf\); and
	\[
	\sum_{i=0}^{\lf t/2 \rf} |n_i-n_{i-1}| =\begin{cases}
	m+1 & \text{for odd~} m,\\
	m & \text{for even~}m,
	\end{cases}
	\]
where we set \(n_{-1}:=0\) and \(n_{\lf t/2 \rf}:=0\).
\item[(c)] The set of cornerless free Motzkin paths in \(\Fc(m,\lf t/2 \rf ,k)\).
\item[(d)] The set of cornerless Motzkin prefixes in \(\MC(m,\lf t/2 \rf ,k)\).
\item[(e)] The set of cornerless symmetric Motzkin paths of length \(2m+t-1\) with \(t-1\) flat steps that are contained in the strip \(0\leq y \leq k\).
\end{enumerate}
\end{thm}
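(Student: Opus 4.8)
The plan is to prove the fivefold equivalence by assembling an explicit chain of bijections
\[
\text{(a)}\leftrightarrow\text{(b)}\leftrightarrow\text{(c)}\leftrightarrow\text{(d)}\leftrightarrow\text{(e)},
\]
so that composing the links produces a bijection between any prescribed pair of the sets. Two of the four links are already in hand, so the work concentrates at the ends of the chain. For (c)\(\leftrightarrow\)(d) I would simply invoke Proposition~\ref{thm:cornerless map}: for each \(i\le k\) the map \(\phi_{m,i}\) restricts to a bijection \(\oFc(m,\lf t/2\rf,i)\to\oMc(m,\lf t/2\rf,i)\), and taking the union over \(0\le i\le k\) yields a bijection \(\Fc(m,\lf t/2\rf,k)\to\MC(m,\lf t/2\rf,k)\), which is exactly (c)\(\leftrightarrow\)(d).

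For (b)\(\leftrightarrow\)(c) I would read a sequence \((n_0,\dots,n_{\lf t/2\rf-1})\) as the successive heights of the \(\lf t/2\rf\) flat steps of a cornerless free Motzkin path, joining consecutive heights by monotone runs and using \(n_{-1}=0\), \(n_{\lf t/2\rf}=0\) for the initial and terminal runs. The strip constraint \(-\lf k/2\rf\le n_i\le\lf (k+1)/2\rf\) is precisely membership in \(\Fc(m,\lf t/2\rf,k)\), and the sign condition on \(n_0\) is exactly the defining condition of \(\oFc\) (no initial down step for odd \(m\), no initial up step for even \(m\)). The delicate point is the parity bookkeeping: a free Motzkin path ends at height \(0\), so its start height \(\epsilon\in\{0,1\}\) satisfies \(\epsilon\equiv m\pmod2\); thus for odd \(m\) the path starts at height \(1\), the initial run supplies \(n_0-1\) rather than \(n_0\) steps, and the number of up/down steps is \(\sum_{i=0}^{\lf t/2\rf}|n_i-n_{i-1}|-1=m\), whereas for even \(m\) it starts at height \(0\) and the count is \(\sum_{i=0}^{\lf t/2\rf}|n_i-n_{i-1}|=m\). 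This matches the two cases of the sum condition in (b).

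For (d)\(\leftrightarrow\)(e) I would fold a symmetric path about its axis of symmetry. A cornerless symmetric Motzkin path \(P=\overleftarrow{P}\) must carry a flat step at its center: otherwise the two central steps would complement each other and form a peak or a valley. Hence for odd \(t\) (even length \(2m+t-1\)) the left half, and for even \(t\) (odd length, with a single central flat step) the left portion up to and including that central flat step, is a cornerless Motzkin prefix of length \(m+\lf t/2\rf\) with \(\lf t/2\rf\) flat steps, contained in \(0\le y\le k\) and \emph{ending in a flat step}, i.e.\ an element of \(\MC(m,\lf t/2\rf,k)\). The inverse is the unfolding \(S\mapsto S\overleftarrow{S}\) (resp.\ gluing \(S\) to its mirror image across the shared central flat step), and the requirement that \(S\) end in a flat step is exactly what guarantees that no peak or valley is created at the center after unfolding.

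The main obstacle is the end link (a)\(\leftrightarrow\)(b), which is the combinatorial heart of the statement. Here I would follow the strategy of the proof of Theorem~\ref{thm:corner}, using Lemma~\ref{lem:self-conjugate} to encode a self-conjugate \(t\)-core \(\la\) by its \(t\)-doubled abacus and recording the signed column counts \(n_j\) (positive if a bead occupies row \(0\) of column \(j\), negative otherwise). Three identifications must then be verified. First, via the labeling \(|2(tj'+j)+1|\) of the doubled abacus, the constraint \(h(1,1)<kt\) translates into \(-\lf k/2\rf\le n_j\le\lf (k+1)/2\rf\). Second, a bead occupies position \((0,0)\), labeled \(1\), if and only if \(1\in\MD(\la)\), i.e.\ \(\la\) has an inner corner on the main diagonal; since the inner corners of a self-conjugate diagram are symmetric about the diagonal and pair off except for a possible diagonal corner, this occurs exactly when the number of corners \(m\) is odd, giving \(n_0>0\Leftrightarrow m\) odd. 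Third, the number of corners is recovered from the total variation of the signed bead profile, so that \(\sum_{i=0}^{\lf t/2\rf}|n_i-n_{i-1}|\) equals \(m+1\) for odd \(m\) and \(m\) for even \(m\), the extra unit in the odd case accounting for the diagonal corner, which is counted singly rather than in a symmetric pair. Establishing this last count carefully, by translating the inner corners of the symmetric Young diagram into the steps of the bead profile between adjacent columns of the doubled abacus and correctly handling the main diagonal, is the crux; the remaining identifications and the other three links are then routine verifications.
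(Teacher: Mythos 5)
Your proposal is correct and takes essentially the same route as the paper: the same chain of bijections (a)$\leftrightarrow$(b)$\leftrightarrow$(c)$\leftrightarrow$(d)$\leftrightarrow$(e), built from the doubled-abacus encoding of Lemma~\ref{lem:self-conjugate}, the flat-step-height encoding, Proposition~\ref{thm:cornerless map} for the middle link, and folding at the axis of symmetry for the last link. The only substantive divergence is in your favor: you split the unfolding map by the parity of $t$ (doubling the whole prefix for odd $t$, sharing the central flat step for even $t$), whereas the paper's printed map $\phi_3$ splits by the parity of $m$; the two agree only when $m\equiv t\pmod 2$, and in the mismatched case (e.g.\ $t=4$, $m=1$) the paper's formula outputs a path of length $2m+t$ with $t$ flat steps rather than one of length $2m+t-1$ with $t-1$ flat steps, so your parity bookkeeping actually corrects a slip in the paper's proof.
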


\begin{proof}
Let $A, B, C, D,$ and $E$ be the set described in (a), (b), (c), (d), and (e), respectively. By similar argument to the proof of Proposition~\ref{thm:cornerless map}, we know that there is a bijection between $C$ and $D$.
Now we set $\phi_1:A\rightarrow B, \phi_2:B\rightarrow C$, and $\phi_3:D\rightarrow E$ and show that $\phi_1, \phi_2, \phi_3$ are bijections.

Given $\la\in A$, let $\phi_1(A)=(n_0, n_1, \ldots, n_{\lf t/2 \rf-1})$, where each $n_i$ is the highest or lowest row that the bead is placed in the $i$th column depending on the sign of $n_i$. We get that $1\in MD(\la)$ when the number of corners $m$ is odd and $1\not\in MD(\la)$ otherwise. Thus, $n_0$ is positive when $m$ is odd and negative otherwise. This map gives a bijection between $A$ and $B$.

Let $\vv{n}=(n_0, n_1, \ldots, n_{\lf t/2 \rf-1})$. For odd (resp. even) $m$, let $\phi_2(\vv{n})$ be the cornerless free Motzkin path that starts at $(0,1)$ (resp. $(0,0)$), ends at $(m+\lf t/2\rf,0)$, has $i$th flat step at height $n_{i-1}$ with proper up and down steps between them. Then, the map $\phi_2$ describes a bijection between $B$ and $C$. 

Denote a path by $P=p_1p_2\cdots p_{m+\lf t/2 \rf}\in D$. We set
\[
\phi_3(P)=\begin{cases}
p_1p_2\cdots p_{m+\lf t/2 \rf}p_{m+\lf t/2 \rf}\cdots p_2p_1 &\text{if \(m\) is odd,}\\
p_1p_2\cdots p_{m+\lf t/2 \rf-1}p_{m+\lf t/2 \rf}p_{m+\lf t/2 \rf-1}\cdots p_2p_1 &\text{if \(m\) is even.}
\end{cases}
\]
Then, the map $\phi_3$ is a bijective. 
\end{proof}

Note that Figure~\ref{fig:corner_example} shows that there are four self-conjugate \(4\)-core partitions with \(2\) corners and four cornerless symmetric Motzkin paths of length \(7\) with \(3\) flat steps, which are marked by \(\ast\) and \(\star\), respectively. The correspondences between the sets described in Theorem~\ref{thm:self-conjugate} for \(t=4, m=2\) and \(t=5, m=3\) are given in Figure~\ref{fig:self-conjugate_example}.

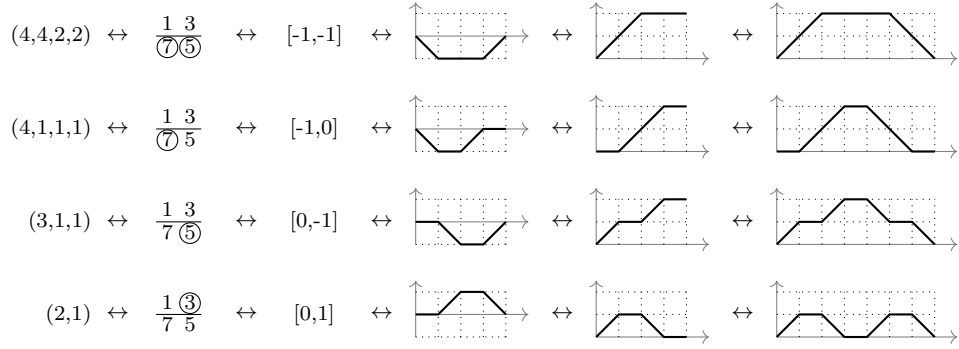
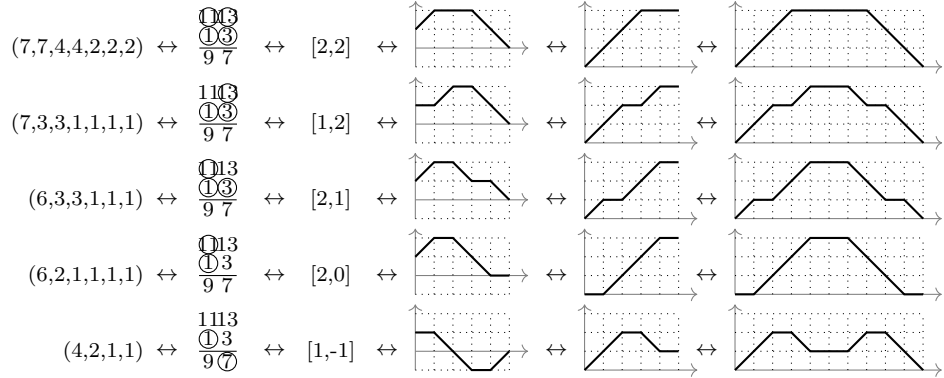
\begin{figure}[hbt]
\centering

\begin{subfigure}[b]{\textwidth}
\footnotesize{
\begin{tikzpicture}[scale=0.3]

\node at (-20,0) {};
\node[left] at (-16,0) {(4,4,2,2)};
\node at (-15.2,0) {\(\leftrightarrow\)};
\draw (-13.5,0) -- (-11.5,0);

\foreach \i/\j in {0/1,1/3}
\node[above] at (-13+\i,-0.1) {\j};

\foreach \i/\j in {0/7,1/5}
\node[above] at (-13+\i,-1.1) {\j};

\foreach \i in {}
\draw[above] (-13+\i,0.5) circle (13pt);

\foreach \i in {0,1}
\draw[above] (-13+\i,-0.5) circle (13pt);

\node at (-9.5,0) {\(\leftrightarrow\)};

\node at (-6.5,0) {[-1,-1]};

\node at (-3.5,0) {\(\leftrightarrow\)};

\draw[gray, ->] (-2,0) -- (3,0);
\draw[gray, ->] (-2,-1) -- (-2,1.5);

\foreach \i in {1,2,...,4}
\draw[dotted] (-2+\i,-1) -- (-2+\i,1);	

\foreach \i in {-1,1}
\draw[dotted] (-2,\i) -- (2,\i);

\node at (9,0) {};

\draw[thick] 
(-2,0) -- (-1,-1) -- (0,-1) -- (1,-1) -- (2,0);

\node at (4.5,0) {\(\leftrightarrow\)};

\draw[gray, ->] (6,-1) -- (11,-1);
\draw[gray, ->] (6,-1) -- (6,1.5);

\foreach \i in {1,2,...,4}
\draw[dotted] (6+\i,-1) -- (6+\i,1);	

\foreach \i in {0,1}
\draw[dotted] (6,\i) -- (10,\i);

\draw[thick] 
(6,-1) -- (7,0) -- (8,1) -- (9,1) -- (10,1);

\node at (12.5,0) {\(\leftrightarrow\)};

\draw[gray, ->] (14,-1) -- (22,-1);
\draw[gray, ->] (14,-1) -- (14,1.5);

\foreach \i in {1,2,...,7}
\draw[dotted] (14+\i,-1) -- (14+\i,1);	

\foreach \i in {0,1}
\draw[dotted] (14,\i) -- (21,\i);

\draw[thick] 
(14,-1) -- (15,0) -- (16,1) -- (17,1) -- (18,1) -- (19,1) -- (20,0) -- (21,-1);

\end{tikzpicture}\\

\begin{tikzpicture}[scale=0.3]

\node at (-20,0) {};
\node[left] at (-16,0) {(4,1,1,1)};
\node at (-15.2,0) {\(\leftrightarrow\)};
\draw (-13.5,0) -- (-11.5,0);

\foreach \i/\j in {0/1,1/3}
\node[above] at (-13+\i,-0.1) {\j};

\foreach \i/\j in {0/7,1/5}
\node[above] at (-13+\i,-1.1) {\j};

\foreach \i in {}
\draw[above] (-13+\i,0.5) circle (13pt);

\foreach \i in {0}
\draw[above] (-13+\i,-0.5) circle (13pt);

\node at (-9.5,0) {\(\leftrightarrow\)};

\node at (-6.5,0) {[-1,0]};

\node at (-3.5,0) {\(\leftrightarrow\)};

\draw[gray, ->] (-2,0) -- (3,0);
\draw[gray, ->] (-2,-1) -- (-2,1.5);

\foreach \i in {1,2,...,4}
\draw[dotted] (-2+\i,-1) -- (-2+\i,1);	

\foreach \i in {-1,1}
\draw[dotted] (-2,\i) -- (2,\i);

\node at (9,0) {};

\draw[thick] 
(-2,0) -- (-1,-1) -- (0,-1) -- (1,0) -- (2,0);

\node at (4.5,0) {\(\leftrightarrow\)};

\draw[gray, ->] (6,-1) -- (11,-1);
\draw[gray, ->] (6,-1) -- (6,1.5);

\foreach \i in {1,2,...,4}
\draw[dotted] (6+\i,-1) -- (6+\i,1);	

\foreach \i in {0,1}
\draw[dotted] (6,\i) -- (10,\i);

\draw[thick] 
(6,-1) -- (7,-1) -- (8,0) -- (9,1) -- (10,1);

\node at (12.5,0) {\(\leftrightarrow\)};

\draw[gray, ->] (14,-1) -- (22,-1);
\draw[gray, ->] (14,-1) -- (14,1.5);

\foreach \i in {1,2,...,7}
\draw[dotted] (14+\i,-1) -- (14+\i,1);	

\foreach \i in {0,1}
\draw[dotted] (14,\i) -- (21,\i);

\draw[thick] 
(14,-1) -- (15,-1) -- (16,0) -- (17,1) -- (18,1) -- (19,0) -- (20,-1) -- (21,-1);

\end{tikzpicture}\\

\begin{tikzpicture}[scale=0.3]

\node at (-20,0) {};
\node[left] at (-16,0) {(3,1,1)};
\node at (-15.2,0) {\(\leftrightarrow\)};
\draw (-13.5,0) -- (-11.5,0);

\foreach \i/\j in {0/1,1/3}
\node[above] at (-13+\i,-0.1) {\j};

\foreach \i/\j in {0/7,1/5}
\node[above] at (-13+\i,-1.1) {\j};

\foreach \i in {}
\draw[above] (-13+\i,0.5) circle (13pt);

\foreach \i in {1}
\draw[above] (-13+\i,-0.5) circle (13pt);

\node at (-9.5,0) {\(\leftrightarrow\)};

\node at (-6.5,0) {[0,-1]};

\node at (-3.5,0) {\(\leftrightarrow\)};

\draw[gray, ->] (-2,0) -- (3,0);
\draw[gray, ->] (-2,-1) -- (-2,1.5);

\foreach \i in {1,2,...,4}
\draw[dotted] (-2+\i,-1) -- (-2+\i,1);	

\foreach \i in {-1,1}
\draw[dotted] (-2,\i) -- (2,\i);

\node at (9,0) {};

\draw[thick] 
(-2,0) -- (-1,0) -- (0,-1) -- (1,-1) -- (2,0);

\node at (4.5,0) {\(\leftrightarrow\)};

\draw[gray, ->] (6,-1) -- (11,-1);
\draw[gray, ->] (6,-1) -- (6,1.5);

\foreach \i in {1,2,...,4}
\draw[dotted] (6+\i,-1) -- (6+\i,1);	

\foreach \i in {0,1}
\draw[dotted] (6,\i) -- (10,\i);

\draw[thick] 
(6,-1) -- (7,0) -- (8,0) -- (9,1) -- (10,1);

\node at (12.5,0) {\(\leftrightarrow\)};

\draw[gray, ->] (14,-1) -- (22,-1);
\draw[gray, ->] (14,-1) -- (14,1.5);

\foreach \i in {1,2,...,7}
\draw[dotted] (14+\i,-1) -- (14+\i,1);	

\foreach \i in {0,1}
\draw[dotted] (14,\i) -- (21,\i);

\draw[thick] 
(14,-1) -- (15,0) -- (16,0) -- (17,1) -- (18,1) -- (19,0) -- (20,0) -- (21,-1);

\end{tikzpicture}\\

\begin{tikzpicture}[scale=0.3]

\node at (-20,0) {};
\node[left] at (-16,0) {(2,1)};
\node at (-15.2,0) {\(\leftrightarrow\)};
\draw (-13.5,0) -- (-11.5,0);

\foreach \i/\j in {0/1,1/3}
\node[above] at (-13+\i,-0.1) {\j};

\foreach \i/\j in {0/7,1/5}
\node[above] at (-13+\i,-1.1) {\j};

\foreach \i in {1}
\draw[above] (-13+\i,0.5) circle (13pt);

\foreach \i in {}
\draw[above] (-13+\i,-0.5) circle (13pt);

\node at (-9.5,0) {\(\leftrightarrow\)};

\node at (-6.5,0) {[0,1]};

\node at (-3.5,0) {\(\leftrightarrow\)};

\draw[gray, ->] (-2,0) -- (3,0);
\draw[gray, ->] (-2,-1) -- (-2,1.5);

\foreach \i in {1,2,...,4}
\draw[dotted] (-2+\i,-1) -- (-2+\i,1);	

\foreach \i in {-1,1}
\draw[dotted] (-2,\i) -- (2,\i);

\node at (9,0) {};

\draw[thick] 
(-2,0) -- (-1,0) -- (0,1) -- (1,1) -- (2,0);

\node at (4.5,0) {\(\leftrightarrow\)};

\draw[gray, ->] (6,-1) -- (11,-1);
\draw[gray, ->] (6,-1) -- (6,1.5);

\foreach \i in {1,2,...,4}
\draw[dotted] (6+\i,-1) -- (6+\i,1);	

\foreach \i in {0,1}
\draw[dotted] (6,\i) -- (10,\i);

\draw[thick] 
(6,-1) -- (7,0) -- (8,0) -- (9,-1) -- (10,-1);

\node at (12.5,0) {\(\leftrightarrow\)};

\draw[gray, ->] (14,-1) -- (22,-1);
\draw[gray, ->] (14,-1) -- (14,1.5);

\foreach \i in {1,2,...,7}
\draw[dotted] (14+\i,-1) -- (14+\i,1);	

\foreach \i in {0,1}
\draw[dotted] (14,\i) -- (21,\i);

\draw[thick] 
(14,-1) -- (15,0) -- (16,0) -- (17,-1) -- (18,-1) -- (19,0) -- (20,0) -- (21,-1);

\end{tikzpicture}\\

}

\subcaption{\(t=4\) and \(m=2\)}
\end{subfigure}
\vspace{2mm}

\begin{subfigure}[b]{\textwidth}
\centering
\footnotesize{
\begin{tikzpicture}[scale=0.25]

\node at (-23.5,0) {};
\node[left] at (-16,0) {(7,7,4,4,2,2,2)};
\node at (-15.2,0) {\(\leftrightarrow\)};
\draw (-13.5,0) -- (-11.5,0);

\foreach \i/\j in {0/11,1/13}
\node[above] at (-13+\i,0.9) {\j};

\foreach \i/\j in {0/1,1/3}
\node[above] at (-13+\i,-0.1) {\j};

\foreach \i/\j in {0/9,1/7}
\node[above] at (-13+\i,-1.3) {\j};

\foreach \i in {0,1}
\draw[above] (-13+\i,1.65) circle (14pt);

\foreach \i in {0,1}
\draw[above] (-13+\i,0.65) circle (14pt);

\foreach \i in {}
\draw[above] (-13+\i,-0.55) circle (14pt);

\node at (-9.5,0) {\(\leftrightarrow\)};

\node at (-6.5,0) {[2,2]};

\node at (-3.5,0) {\(\leftrightarrow\)};

\draw[gray, ->] (-2,0) -- (4,0);
\draw[gray, ->] (-2,-1) -- (-2,2.5);

\foreach \i in {1,2,...,5}
\draw[dotted] (-2+\i,-1) -- (-2+\i,2);	

\foreach \i in {-1,1,2}
\draw[dotted] (-2,\i) -- (3,\i);

\node at (9,0) {};

\draw[thick] 
(-2,1) -- (-1,2) -- (0,2) -- (1,2) -- (2,1) -- (3,0);

\node at (5.5,0) {\(\leftrightarrow\)};

\draw[gray, ->] (7,-1) -- (13,-1);
\draw[gray, ->] (7,-1) -- (7,2.5);

\foreach \i in {1,2,...,5}
\draw[dotted] (7+\i,-1) -- (7+\i,2);	

\foreach \i in {0,1,2}
\draw[dotted] (7,\i) -- (12,\i);

\draw[thick] 
(7,-1) -- (8,0) -- (9,1) -- (10,2) -- (11,2) -- (12,2);

\node at (13.5,0) {\(\leftrightarrow\)};

\draw[gray, ->] (15,-1) -- (26,-1);
\draw[gray, ->] (15,-1) -- (15,2.5);

\foreach \i in {1,2,...,10}
\draw[dotted] (15+\i,-1) -- (15+\i,2);	

\foreach \i in {0,1,2}
\draw[dotted] (15,\i) -- (25,\i);

\draw[thick] 
(15,-1) -- (16,0) -- (17,1) -- (18,2) -- (19,2) -- (20,2) -- (21,2) -- (22,2) -- (23,1) -- (24,0) -- (25,-1);

\end{tikzpicture}\\

\begin{tikzpicture}[scale=0.25]

\node at (-23.5,0) {};
\node[left] at (-16,0) {(7,3,3,1,1,1,1)};
\node at (-15.2,0) {\(\leftrightarrow\)};
\draw (-13.5,0) -- (-11.5,0);

\foreach \i/\j in {0/11,1/13}
\node[above] at (-13+\i,0.9) {\j};

\foreach \i/\j in {0/1,1/3}
\node[above] at (-13+\i,-0.1) {\j};

\foreach \i/\j in {0/9,1/7}
\node[above] at (-13+\i,-1.3) {\j};

\foreach \i in {1}
\draw[above] (-13+\i,1.65) circle (14pt);

\foreach \i in {0,1}
\draw[above] (-13+\i,0.65) circle (14pt);

\foreach \i in {}
\draw[above] (-13+\i,-0.55) circle (14pt);

\node at (-9.5,0) {\(\leftrightarrow\)};

\node at (-6.5,0) {[1,2]};

\node at (-3.5,0) {\(\leftrightarrow\)};

\draw[gray, ->] (-2,0) -- (4,0);
\draw[gray, ->] (-2,-1) -- (-2,2.5);

\foreach \i in {1,2,...,5}
\draw[dotted] (-2+\i,-1) -- (-2+\i,2);	

\foreach \i in {-1,1,2}
\draw[dotted] (-2,\i) -- (3,\i);

\node at (9,0) {};

\draw[thick] 
(-2,1) -- (-1,1) -- (0,2) -- (1,2) -- (2,1) -- (3,0);

\node at (5.5,0) {\(\leftrightarrow\)};

\draw[gray, ->] (7,-1) -- (13,-1);
\draw[gray, ->] (7,-1) -- (7,2.5);

\foreach \i in {1,2,...,5}
\draw[dotted] (7+\i,-1) -- (7+\i,2);	

\foreach \i in {0,1,2}
\draw[dotted] (7,\i) -- (12,\i);

\draw[thick] 
(7,-1) -- (8,0) -- (9,1) -- (10,1) -- (11,2) -- (12,2);

\node at (13.5,0) {\(\leftrightarrow\)};

\draw[gray, ->] (15,-1) -- (26,-1);
\draw[gray, ->] (15,-1) -- (15,2.5);

\foreach \i in {1,2,...,10}
\draw[dotted] (15+\i,-1) -- (15+\i,2);	

\foreach \i in {0,1,2}
\draw[dotted] (15,\i) -- (25,\i);

\draw[thick] 
(15,-1) -- (16,0) -- (17,1) -- (18,1) -- (19,2) -- (20,2) -- (21,2) -- (22,1) -- (23,1) -- (24,0) -- (25,-1);

\end{tikzpicture}\\

\begin{tikzpicture}[scale=0.25]

\node at (-23.5,0) {};
\node[left] at (-16,0) {(6,3,3,1,1,1)};
\node at (-15.2,0) {\(\leftrightarrow\)};
\draw (-13.5,0) -- (-11.5,0);

\foreach \i/\j in {0/11,1/13}
\node[above] at (-13+\i,0.9) {\j};

\foreach \i/\j in {0/1,1/3}
\node[above] at (-13+\i,-0.1) {\j};

\foreach \i/\j in {0/9,1/7}
\node[above] at (-13+\i,-1.3) {\j};

\foreach \i in {0}
\draw[above] (-13+\i,1.65) circle (14pt);

\foreach \i in {0,1}
\draw[above] (-13+\i,0.65) circle (14pt);

\foreach \i in {}
\draw[above] (-13+\i,-0.55) circle (14pt);

\node at (-9.5,0) {\(\leftrightarrow\)};

\node at (-6.5,0) {[2,1]};

\node at (-3.5,0) {\(\leftrightarrow\)};

\draw[gray, ->] (-2,0) -- (4,0);
\draw[gray, ->] (-2,-1) -- (-2,2.5);

\foreach \i in {1,2,...,5}
\draw[dotted] (-2+\i,-1) -- (-2+\i,2);	

\foreach \i in {-1,1,2}
\draw[dotted] (-2,\i) -- (3,\i);

\node at (9,0) {};

\draw[thick] 
(-2,1) -- (-1,2) -- (0,2) -- (1,1) -- (2,1) -- (3,0);

\node at (5.5,0) {\(\leftrightarrow\)};

\draw[gray, ->] (7,-1) -- (13,-1);
\draw[gray, ->] (7,-1) -- (7,2.5);

\foreach \i in {1,2,...,5}
\draw[dotted] (7+\i,-1) -- (7+\i,2);	

\foreach \i in {0,1,2}
\draw[dotted] (7,\i) -- (12,\i);

\draw[thick] 
(7,-1) -- (8,0) -- (9,0) -- (10,1) -- (11,2) -- (12,2);

\node at (13.5,0) {\(\leftrightarrow\)};

\draw[gray, ->] (15,-1) -- (26,-1);
\draw[gray, ->] (15,-1) -- (15,2.5);

\foreach \i in {1,2,...,10}
\draw[dotted] (15+\i,-1) -- (15+\i,2);	

\foreach \i in {0,1,2}
\draw[dotted] (15,\i) -- (25,\i);

\draw[thick] 
(15,-1) -- (16,0) -- (17,0) -- (18,1) -- (19,2) -- (20,2) -- (21,2) -- (22,1) -- (23,0) -- (24,0) -- (25,-1);

\end{tikzpicture}\\

\begin{tikzpicture}[scale=0.25]

\node at (-23.5,0) {};
\node[left] at (-16,0) {(6,2,1,1,1,1)};
\node at (-15.2,0) {\(\leftrightarrow\)};
\draw (-13.5,0) -- (-11.5,0);

\foreach \i/\j in {0/11,1/13}
\node[above] at (-13+\i,0.9) {\j};

\foreach \i/\j in {0/1,1/3}
\node[above] at (-13+\i,-0.1) {\j};

\foreach \i/\j in {0/9,1/7}
\node[above] at (-13+\i,-1.3) {\j};

\foreach \i in {0}
\draw[above] (-13+\i,1.65) circle (14pt);

\foreach \i in {0}
\draw[above] (-13+\i,0.65) circle (14pt);

\foreach \i in {}
\draw[above] (-13+\i,-0.55) circle (14pt);

\node at (-9.5,0) {\(\leftrightarrow\)};

\node at (-6.5,0) {[2,0]};

\node at (-3.5,0) {\(\leftrightarrow\)};

\draw[gray, ->] (-2,0) -- (4,0);
\draw[gray, ->] (-2,-1) -- (-2,2.5);

\foreach \i in {1,2,...,5}
\draw[dotted] (-2+\i,-1) -- (-2+\i,2);	

\foreach \i in {-1,1,2}
\draw[dotted] (-2,\i) -- (3,\i);

\node at (9,0) {};

\draw[thick] 
(-2,1) -- (-1,2) -- (0,2) -- (1,1) -- (2,0) -- (3,0);

\node at (5.5,0) {\(\leftrightarrow\)};

\draw[gray, ->] (7,-1) -- (13,-1);
\draw[gray, ->] (7,-1) -- (7,2.5);

\foreach \i in {1,2,...,5}
\draw[dotted] (7+\i,-1) -- (7+\i,2);	

\foreach \i in {0,1,2}
\draw[dotted] (7,\i) -- (12,\i);

\draw[thick] 
(7,-1) -- (8,-1) -- (9,0) -- (10,1) -- (11,2) -- (12,2);

\node at (13.5,0) {\(\leftrightarrow\)};

\draw[gray, ->] (15,-1) -- (26,-1);
\draw[gray, ->] (15,-1) -- (15,2.5);

\foreach \i in {1,2,...,10}
\draw[dotted] (15+\i,-1) -- (15+\i,2);	

\foreach \i in {0,1,2}
\draw[dotted] (15,\i) -- (25,\i);

\draw[thick] 
(15,-1) -- (16,-1) -- (17,0) -- (18,1) -- (19,2) -- (20,2) -- (21,2) -- (22,1) -- (23,0) -- (24,-1) -- (25,-1);

\end{tikzpicture}\\

\begin{tikzpicture}[scale=0.25]

\node at (-23.5,0) {};
\node[left] at (-16,0) {(4,2,1,1)};
\node at (-15.2,0) {\(\leftrightarrow\)};
\draw (-13.5,0) -- (-11.5,0);

\foreach \i/\j in {0/11,1/13}
\node[above] at (-13+\i,0.9) {\j};

\foreach \i/\j in {0/1,1/3}
\node[above] at (-13+\i,-0.1) {\j};

\foreach \i/\j in {0/9,1/7}
\node[above] at (-13+\i,-1.3) {\j};

\foreach \i in {}
\draw[above] (-13+\i,1.65) circle (14pt);

\foreach \i in {0}
\draw[above] (-13+\i,0.65) circle (14pt);

\foreach \i in {1}
\draw[above] (-13+\i,-0.55) circle (14pt);

\node at (-9.5,0) {\(\leftrightarrow\)};

\node at (-6.5,0) {[1,-1]};

\node at (-3.5,0) {\(\leftrightarrow\)};

\draw[gray, ->] (-2,0) -- (4,0);
\draw[gray, ->] (-2,-1) -- (-2,2.5);

\foreach \i in {1,2,...,5}
\draw[dotted] (-2+\i,-1) -- (-2+\i,2);	

\foreach \i in {-1,1,2}
\draw[dotted] (-2,\i) -- (3,\i);

\node at (9,0) {};

\draw[thick] 
(-2,1) -- (-1,1) -- (0,0) -- (1,-1) -- (2,-1) -- (3,0);

\node at (5.5,0) {\(\leftrightarrow\)};

\draw[gray, ->] (7,-1) -- (13,-1);
\draw[gray, ->] (7,-1) -- (7,2.5);

\foreach \i in {1,2,...,5}
\draw[dotted] (7+\i,-1) -- (7+\i,2);	

\foreach \i in {0,1,2}
\draw[dotted] (7,\i) -- (12,\i);

\draw[thick] 
(7,-1) -- (8,0) -- (9,1) -- (10,1) -- (11,0) -- (12,0);

\node at (13.5,0) {\(\leftrightarrow\)};

\draw[gray, ->] (15,-1) -- (26,-1);
\draw[gray, ->] (15,-1) -- (15,2.5);

\foreach \i in {1,2,...,10}
\draw[dotted] (15+\i,-1) -- (15+\i,2);	

\foreach \i in {0,1,2}
\draw[dotted] (15,\i) -- (25,\i);

\draw[thick] 
(15,-1) -- (16,0) -- (17,1) -- (18,1) -- (19,0) -- (20,0) -- (21,0) -- (22,1) -- (23,1) -- (24,0) -- (25,-1);

\end{tikzpicture}\\
}
\subcaption{\(t=5\) and \(m=3\)}
\end{subfigure}

\caption{Examples of self-conjugate \(t\)-cores with \(m\) corners and the corresponding objects}
\label{fig:self-conjugate_example}
\end{figure}

Although the number of self-conjugate $(t,t+1, \cdots, t+p)$-cores with the fixed number of corners is unknown in general, it is enumerated in \cite{ChoHong, ChoHuh} when $p=1, 2,$ and $3$. 
The number of self-conjugate \(t\)-core partitions with \(m\) corners can be counted by using these path interpretations.

\begin{prop}\label{prop:count_self-conjugate}
The number of self-conjugate \(t\)-core partitions with \(m\) corners is given by
\[
\scc(t,m):=\sum_{i=1}^{\min(m,\lf t/2 \rf)}\binom{\lf \frac{m-1}{2} \rf}{\lf \frac{i-1}{2} \rf}\binom{\lf \frac{m}{2} \rf}{\lf \frac{i}{2}\rf}\binom{\lf \frac{t}{2} \rf +m-i}{m}
\]
for \(m>0\) and \(\scc(t,0)=1\).
In addition, \(\scc(t,m)=\scc(t+1,m)\) for even \(t\).
\end{prop}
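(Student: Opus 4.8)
The plan is to mirror the proof of Proposition~\ref{prop:count_ordinary}, replacing ordinary cornerless Motzkin paths by \emph{symmetric} ones. By Theorem~\ref{thm:self-conjugate} (taking $k\ge m$, which is no real restriction since a path of length $2m+t-1$ built from $2m$ non-flat steps never rises above height $m$, and only finitely many self-conjugate $t$-cores with $m$ corners occur), the number $\scc(t,m)$ equals the number of cornerless symmetric Motzkin paths of length $2m+t-1$ with $t-1$ flat steps. For $m=0$ the only such path is $f^{t-1}$, giving $\scc(t,0)=1$, so I would assume $m>0$. I would then count these paths by first deleting all flat steps: the remaining up/down subpath is a symmetric Dyck path of length $2m$, say with $i$ peaks, and the remark following Corollary~\ref{cor:run} together with \eqref{eq:symmetric_peak} shows that the number of such Dyck paths is $\binom{\lf (m-1)/2\rf}{\lf (i-1)/2\rf}\binom{\lf m/2\rf}{\lf i/2\rf}$. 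It remains to count, for each fixed symmetric Dyck path, the number of symmetric insertions of the $t-1$ flat steps that yield a cornerless Motzkin path.

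The heart of the argument is this insertion count, which I expect to equal $\binom{\lf t/2\rf+m-i}{m}$. A symmetric Dyck path of length $2m$ is symmetric about the vertical line $x=m$, and its central vertex $x=m$ is always a corner (a peak when $i$ is odd, a valley when $i$ is even), while the other $2i-2$ corners split into $i-1$ mirror pairs. A symmetric flat-step placement is determined by the counts $c_0,\dots,c_{m-1}$ at the $m$ gaps left of the centre together with the central count $c_m$, the right half being forced; the total satisfies $2\sum_{g=0}^{m-1}c_g+c_m=t-1$. The cornerless condition forces $c_m\ge 1$ and $c_g\ge 1$ at the $i-1$ left corner-gaps. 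Absorbing these mandatory steps via substitutions reduces the count to the number of nonnegative solutions of $2S+f=t-2i$, where $S$ is the total slack distributed over the $m$ left gaps and $f$ is the residual central slack; summing $\binom{S+m-1}{m-1}$ over $0\le S\le \lf t/2\rf-i$ and applying the hockey-stick identity gives exactly $\binom{\lf t/2\rf+m-i}{m}$. This vanishes unless $i\le \lf t/2\rf$, which together with $1\le i\le m$ fixes the summation range $1\le i\le \min(m,\lf t/2\rf)$; multiplying the two factors and summing over $i$ produces the claimed formula.

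For the identity $\scc(t,m)=\scc(t+1,m)$ with $t$ even, I would simply note that $\lf t/2\rf=\lf (t+1)/2\rf$ for even $t$, so both the upper limit $\min(m,\lf t/2\rf)$ and the factor $\binom{\lf t/2\rf+m-i}{m}$ are unchanged under $t\mapsto t+1$, while the factors $\binom{\lf (m-1)/2\rf}{\lf (i-1)/2\rf}\binom{\lf m/2\rf}{\lf i/2\rf}$ are independent of $t$; hence the two sums agree termwise. I expect the main obstacle to be the symmetric insertion count: one must isolate the central gap correctly—unlike the paired side gaps it contributes a single flat step to the symmetric total and always carries a mandatory corner—track the parity constraint this creates, and recognize the resulting one-parameter sum as a hockey-stick telescoping. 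The symmetric-Dyck-with-$i$-peaks enumeration is already available from \eqref{eq:symmetric_peak}, so no new work is needed there.
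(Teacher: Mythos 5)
Your proposal is correct and follows essentially the same route as the paper: identify \(\scc(t,m)\) with cornerless symmetric Motzkin paths via Theorem~\ref{thm:self-conjugate}, stratify by the underlying symmetric Dyck path with \(i\) peaks counted by \eqref{eq:symmetric_peak}, and multiply by the symmetric insertion count \(\binom{\lf t/2\rf+m-i}{m}\). The only difference is that the paper merely asserts this insertion count, whereas you derive it explicitly (central gap forced, \(i-1\) left corner gaps forced, then the hockey-stick sum over the slack), and that derivation is correct.
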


\begin{proof}
By Theorem~\ref{thm:self-conjugate}, \(\scc(t,m)\) also counts the number of cornerless symmetric Motzkin paths of length \(2m+t-1\) with \(t-1\) flat steps. Let a symmetric Dyck path consisting of \(m\) up steps and \(m\) down steps with \(i\) peaks with \(2i\leq t\) be given. The number of ways inserting \(t-1\) flat steps such that the resultant path becomes a cornerless symmetric Motzkin path is \(\binom{\lf t/2 \rf +m-i}{m}\). The proof is followed since the number of symmetric Dyck paths consisting of \(m\) up steps and \(m\) down steps with \(i\) peaks is given by \eqref{eq:symmetric_peak}.
\end{proof}

The numbers of self-conjugate \(t\)-core partitions with \(m\) corners for \(2\leq t\leq 11\) and \(1\leq m \leq 8\) are given in  Table~\ref{table2}. 
Clearly, \(\scc(2,m)=\scc(3,m)=1\), \(\scc(4,m)=\scc(5,m)=\lf 3m/2 \rf +1\), and \(\scc(6,m)=\scc(7,m)=(10m(m+1)+(-1)^m(2m+1)+7)/8\).

\begin{table}[htb!]
\centering
\begin{tabular}{c|cccccccccccccccc}
\noalign{\smallskip}\noalign{\smallskip}
\(t \backslash m\)&& 1 && 2 && 3 && 4 && 5 && 6 && 7 && 8 \\
\hline
2,3 && 1 && 1 && 1 && 1 && 1 && 1 && 1 && 1\\
4,5 && 2 && 4 && 5 && 7 && 8 && 10 && 11 && 13\\
6,7 && 3 && 9 && 15 && 27 && 37 && 55 && 69 && 93\\
8,9 && 4 && 16 && 34 && 76 && 124 && 216 && 309 && 471\\
10,11 && 5 && 25 && 65 && 175 && 335 && 675 && 1095 && 1875\\

\end{tabular}
\caption{The numbers \(\scc(t,m)\) of self-conjugate \(t\)-cores with \(m\) corners}\label{table2}
\end{table}


\newpage

\end{document}